\documentclass[10pt,reqno]{amsart}
\usepackage[numbers]{natbib}
\usepackage{amsaddr}
\usepackage{etoolbox}
\patchcmd{\section}{\scshape}{\bfseries\scshape}{}{}
\makeatletter
\renewcommand{\@secnumfont}{\bfseries}
\makeatother

\makeatletter
\renewcommand\subsection{\@startsection{subsection}{2}%
  \z@{-.5\linespacing\@plus-.0\linespacing}{0.3\linespacing}%
  {\bfseries}}
\makeatother

\pdfoutput=1 
\usepackage[a4paper,margin=3cm]{geometry}

\usepackage[utf8]{inputenc} 
\usepackage[T1]{fontenc}    

\usepackage[final]{hyperref}      
\hypersetup{
    colorlinks=true,
    citecolor=green,
    filecolor=black,
    linkcolor=blue,
    urlcolor=blue
}

\usepackage{url}            
\usepackage{booktabs}       
\usepackage{amsfonts}       
\usepackage{nicefrac}       
\usepackage{microtype}      
\usepackage{amsthm}

\usepackage{enumitem}

\usepackage[dvips]{graphicx}
\usepackage{amssymb,amsmath,color}
\usepackage{cancel}
\usepackage{soul}

\usepackage{tikz-cd}
\usetikzlibrary{arrows.meta, 
                shapes.geometric}
\usetikzlibrary{matrix}

\usepackage[mathcal]{eucal}

\DeclareMathAlphabet\mathbfcal{OMS}{cmsy}{b}{n}

\newcommand{\vertiii}[1]{{\left\vert\kern-0.25ex\left\vert\kern-0.25ex\left\vert #1 
    \right\vert\kern-0.25ex\right\vert\kern-0.25ex\right\vert}}


\newcommand{\BEAS}{\begin{eqnarray*}}
\newcommand{\EEAS}{\end{eqnarray*}}
\newcommand{\BEA}{\begin{eqnarray}}
\newcommand{\EEA}{\end{eqnarray}}
\newcommand{\BEQ}{\begin{equation}}
\newcommand{\EEQ}{\end{equation}}
\newcommand{\BIT}{\begin{itemize}}
\newcommand{\EIT}{\end{itemize}}
\newcommand{\BNUM}{\begin{enumerate}}
\newcommand{\ENUM}{\end{enumerate}}
\newcommand{\BA}{\begin{array}}
\newcommand{\EA}{\end{array}}


\newtheorem{lemma}{Lemma}
\newtheorem{theorem}{Theorem}
\newtheorem{proposition}{Proposition}
\newtheorem{corollary}{Corollary}

\parindent 0pt
\topsep 4pt plus 1pt minus 2pt
\partopsep 1pt plus 0.5pt minus 0.5pt
\itemsep 2pt plus 1pt minus 0.5pt
\parsep 2pt plus 1pt minus 0.5pt
\parskip .5pc

\def \X{{\mathcal X}}

\def \A{{\mathcal A}}



\newcommand{\e}{\varepsilon}

\newcommand{\Ee}{\mathbb{E}}
\newcommand{\Rr}{\mathbb{R}}
\newcommand{\Pp}{\mathbb{P}}
\newcommand{\Nn}{\mathbb{N}}
\newcommand{\Zz}{\mathbb{Z}}

\newcommand{\cH}{\mathcal{H}}
\newcommand{\cA}{\mathcal{A}}

\newcommand{\cF}{\mathcal{F}}

\newcommand{\cX}{\mathcal{X}}

\newcommand{\cU}{\mathcal{U}}

\allowdisplaybreaks 



\makeatletter
\g@addto@macro{\endabstract}{\@setabstract}
\newcommand{\authorfootnotes}{\renewcommand\thefootnote{\@fnsymbol\c@footnote}}%
\makeatother

\title[A Non-asymptotic Analysis of Non-parametric  Temporal-Difference Learning]{A Non-asymptotic Analysis of Non-parametric Temporal-Difference Learning}

\begin{document}

\begin{center}
	\LARGE 
	A Non-asymptotic Analysis of Non-parametric \\ Temporal-Difference Learning \par \bigskip
	
	\normalsize
	Eloïse Berthier\textsuperscript{1}, Ziad Kobeissi\textsuperscript{1,2} and
	Francis Bach\textsuperscript{1} \par \bigskip
	
	\textsuperscript{1}Inria - Département d’informatique de l’ENS \\
	PSL Research University, Paris, France
	\smallskip \par
	\textsuperscript{2}Institut Louis Bachelier, Paris, France\smallskip \par 
\end{center}

\begin{abstract}
 Temporal-difference learning is a popular algorithm for policy evaluation.  In this paper, we study the convergence of the regularized non-parametric TD(0) algorithm, in both the independent and Markovian observation settings. In particular, when TD is performed in a universal reproducing kernel Hilbert space (RKHS), we prove  convergence of the averaged iterates to the optimal value function, even when it does not belong to the RKHS. We provide explicit convergence rates that depend on a source condition relating the regularity of the optimal value function to the RKHS. We illustrate this convergence numerically on a simple continuous-state Markov reward process. 
\end{abstract}

\section{Introduction}

One of the main ingredients of reinforcement learning (RL) is the ability to estimate the long-term effect on future rewards of employing a given policy. This building block, known as policy evaluation, already contains crucial features of more complex RL algorithms, such as SARSA or Q-learning~\cite{sutton2018reinforcement}. Temporal-difference learning (TD), proposed by~\cite{sutton1988learning}, is among the simplest algorithms for policy evaluation. The estimation of the performance of the policy is made through a value function. It is updated \textit{online}, after each new observation of a couple composed of a state transition and a reward.

Although the formulation of TD is quite  natural, its theoretical analysis has proved more challenging, as it combines two difficulties. The first one is that TD \textit{bootstraps}, in the sense that it uses its previous -- possibly inaccurate -- predictions to correct its next predictions, because it does not have access to a fixed ground truth. The second difficulty is that the observations are produced along a trajectory following a fixed policy (\textit{on-policy}), hence they are correlated, which calls for more involved stochastic approximation tools compared to independent identically distributed (\textit{i.i.d.}) samples. Moreover, using \textit{off-policy} samples, produced by a different policy than the one  being evaluated, can make the algorithm diverge~\cite{boyan1994generalization}. Off-policy sampling is out of our scope in this~paper.

A third element which is not inherent to TD further complicates the plot: function approximation. While TD was originally proposed in a tabular setting, its large-scale applicability has been greatly extended by its combination with parametric function approximation~\cite{bradtke1996linear}. This enables the use of any linear or non-linear function approximation method to model the value function, including neural networks. However, one can exhibit unstable diverging behaviors of TD even with simple non-linear approximation schemes~\cite{tsitsiklis1997analysis}. This combination of difficulties has been coined the ``deadly triad'' by~\cite{sutton2015introduction}. We argue that convergence can be obtained even with non-linear function approximation, by making use of the non-parametric formalism of reproducing kernel Hilbert spaces (RKHS), involving linear approximation in infinite-dimension. Studying this case could bring us closer to understanding what happens with other universal approximators used in practice, like neural networks.

\subsection{Contributions}

We study the policy evaluation algorithm TD(0) in the non-parametric case, first when the observations are sampled \textit{i.i.d.}~from the invariant distribution of the Markov chain resulting from the evaluated policy, and then when they are collected from a trajectory of the Markov chain with geometric mixing. In that sense we follow a similar outline as the analysis of~\cite{bhandari2018finite} which is dedicated to the linear case. 

The non-parametric formulation of TD closes the gap between the original tabular formulation and the parametric formulation which involves semi-gradients. It allows the use of classical tools and theory from kernel methods~\cite{cristianini2004kernel}. In particular, we highlight the central role of  infinite-dimensional covariance operators~\cite{baker1973joint, bach2022information} which already appear in the analysis of other non-parametric algorithms, like least-squares regression. We study a regularized variant of TD, a widely used way of dealing with misspecification in regression. Importantly, when the regularized TD approximation is run on an infinite-dimensional RKHS which is dense in the space of square-integrable functions, then there is no approximation error and the algorithm converges to the true value function. More precisely, we provide a proof of convergence in expectation of TD without approximation error, even when the true value function does not belong to the RKHS, under a weaker source condition. Furthermore, we give non-asymptotic convergence rates related to this source condition, which measures the regularity of the true value function relative to the RKHS, \textit{e.g.}, its smoothness if the RKHS is a Sobolev space~\cite{novak2018reproducing}.

 Note that using a universal kernel~\cite{micchelli2006universal} to obtain convergence of TD to the true value function is also interesting from a theoretical point of view. Indeed it exempts us from a possibly tedious study of the approximation (or projection) error on a given basis, and simply removes an error term which in general scales linearly with the horizon of the Markov reward process~\cite{mou2020optimal, yu2010error}.

In the rest of this section, we review the related literature. In Sec.~\ref{sec:td_algo}, we present the algorithm, along with generic results and notations. In Sec.~\ref{sec:mean_path}, we analyze a simplified version of the algorithm, namely population TD in continuous time. This allows to catch the main features of the analysis, while postponing the technicalities related to stochastic approximation. Sec.~\ref{sec:iid} is dedicated to the analysis of non-parametric TD with \textit{i.i.d.}~observations, while Sec.~\ref{sec:markov} consists in a similar analysis for correlated observations sampled from a geometrically mixing Markov chain. Finally, in Sec.~\ref{sec:numerics}, we present simple numerical simulations illustrating the convergence results and the role the main parameters.

\subsection{Related literature}

\textbf{Temporal-difference learning.} The TD algorithm was introduced in its tabular version by~\cite{sutton1988learning}, with a first convergence result for linearly independent features, later extended to dependent features by~\cite{dayan1992convergence}. Further stochastic approximation results were proposed by~\cite{jaakkola1993convergence} for the tabular case, and by~\cite{schapire1996worst} for the linear approximation case.  \cite{tsitsiklis1997analysis} provided a thorough asymptotic analysis of TD with linear function approximation, while failure cases were already known~\cite{baird1995residual}. A non-asymptotic analysis was later proposed by~\cite{lakshminarayanan2018linear} in the \textit{i.i.d.}~sampling case with constant step size, concurrently to another approach extending to Markov sampling by~\cite{bhandari2018finite}. Other  problem-dependent bounds for linear TD were derived around the same period~\cite{dalal2017finite, srikant2019finite}, along with an analysis of variance-reduced TD~\cite{korda2015td, xu2020reanalysis}. All of the analyses mentioned above focus either on the tabular or on the linear \textit{parametric} TD algorithm. A recent work by~\cite{long20212} deals with the batch counterpart of non-parametric TD, namely the least-squares TD algorithm (LSTD), but they rather focus on the analysis of the statistical estimation error. Importantly, LSTD only requires offline computations and is not related to stochastic approximation. Most closely related to our work is the non-parametric regularized TD setting studied by~\cite{koppel2020policy}. However, their analysis is limited to the case where the optimal value function belongs to the RKHS. This is not sufficient to get rid of the approximation error term. Also, we will show later that regularization is not necessary in this case. Furthermore, their analysis is restricted to the \textit{i.i.d.}~setting, for which we will require fewer regularity assumptions.

\textbf{Kernel methods in RL.} To tackle large-dimensional problems, kernel methods have been combined with various RL algorithms, including approximate dynamic programming~\cite{ormoneit2002kernel, bhat2012non, barreto2011reinforcement, grunewalder2012modelling}, policy evaluation~\cite{dai2017learning},  policy iteration~\cite{farahmand2016regularized}, LSTD~\cite{long20212}, the linear programming formulation of RL~\cite{dietterich2001batch}, upper confidence bound~\cite{domingues2021kernel}, or fitted Q-iteration~\cite{duan2021optimal}. Such kernel methods often come along with practical ways to reduce the computational complexity that grows with the number of observed  transitions and rewards~\cite{barreto2016practical, koppel2020policy}.

\textbf{Stochastic approximation.} The analysis of TD requires tools from stochastic approximation~\cite{benveniste2012adaptive}, among which the ODE method~\cite{borkar2000ode}. Such tools are primarily designed for finite-dimensional problems. Stochastic gradient descent (SGD)~\cite{bottou2018optimization} is a specific instance of stochastic approximation that has received extensive attention for supervised learning. In particular, the role of regularization of SGD for least-squares regression has been  studied~\cite{caponnetto2007optimal, cucker2007learning}, as well as the effect of of sampling data from a Markov chain~\cite{nagaraj2020least}. Finally, we use a formalism which is close to the analyses  \cite{dieuleveut2016nonparametric, pillaud2018exponential, berthier2020tight} of non-parametric SGD for least squares regression.

\section{Problem formulation and generic results} \label{sec:td_algo}

\subsection{The non-parametric TD(0) algorithm}

We consider a Markov reward process (MRP), \textit{i.e.}, a Markov chain with a reward associated to each state. This is what results from keeping the policy fixed in a Markov decision process (MDP) for policy evaluation. We consider MRPs in discrete-time, but not necessarily with a countable state space~$\cX$. Specifically, we use the formalism of Markov chains on a measurable state space which unifies discrete- and continuous-state Markov chains.   Formally, let $\cX \subset \Rr^d$ a measurable set associated with the $\sigma$-algebra $\cA$  of Lebesgue measurable sets. Let $(x_n)_{n \geq 1}$ a time-homogeneous Markov chain with Markov kernel~$\kappa$. A Markov kernel~\cite{reiss2012course, klenke2013probability} is a mapping $\kappa:\cX \times \cA \rightarrow [0, 1]$ that has the following two properties: (1) for every $x \in \cX$, $\kappa(x, \cdot)$ is a probability measure on~$\cA$, and (2) for every $A \in \cA$, $\kappa(\cdot, A)$ is $\cA$-measurable. If $\cX$ is a countable set, $\kappa$ is represented by a transition matrix~$Q$ such that $Q_{i,j} := \Pp(j | i) = \kappa(i, \{j\})$, for any $i, j \in \cX$.

We define a reward function $r:\cX \rightarrow \Rr$ uniformly bounded by $R<\infty$, and a discount factor~${\gamma \in [0, 1)}$. The aim of policy evaluation is to compute the value function of the MRP:
\begin{align}
    \forall x \in \cX, \quad V^*(x) = \Ee \Big[\sum_{n =0}^{+\infty} \gamma^n r(x_n) ~\Big|~ x_0 = x \Big], \label{eqn:value}
\end{align}
where the $(x_n)_{n \geq 1}$ are drawn from the Markov chain. A probability distribution $p : \A \rightarrow  \Rr$ is a stationary distribution for~$\kappa$ if for all $A \in \cA$, $p(A) = \int_\cX \kappa(x, A) p(dx)$. 
The existence and uniqueness of a stationary distribution~$p$, along with the convergence of the Markov chain to~$p$ in total variation, is ensured by ergodicity conditions. A sufficient condition is that the Markov chain is Harris ergodic, \textit{i.e.}, it has a regeneration set, and is aperiodic and positively recurrent (see~\cite{asmussen2003applied} and~\cite{durrett2019probability} for an exposition of Harris chains). For discrete-state Markov chains, ergodicity conditions can be expressed somewhat more simply, and any aperiodic and positive recurrent Markov chain has a unique invariant distribution. Throughout this paper, we assume that~$p$ is the unique invariant distribution of the Markov chain, and that it has full support on~$\cX$. Only in Sec.~\ref{sec:markov}, we will in addition assume that the Markov chain is geometrically mixing. 

We define~$L^2(p)$, the set of squared integrable functions~$f:\cX \rightarrow \Rr$ with respect to~$p$, with the norm $\|f\|_{L^2(p)}^2 = \int_\cX f(x)^2 p(dx) < + \infty$. 
We also consider a reproducing kernel Hilbert space~$\cH$ of $\cA$-measurable functions, associated to a positive-definite kernel~$K : \cX \times \cX \rightarrow \Rr$. For all $x \in \cX$, we use the notation~$\Phi(x) := K(x, \cdot)$ for the mapping of~$x$ in~$\cH$, and $\langle \cdot, \cdot \rangle_\cH$ for the inner product in~$\cH$ (we sometimes drop the index). We assume that $M_\cH := \sup_{x \in \cX} K(x, x)$ is finite, which implies that $\cH \subset L^2(p)$. More precisely,  the $\cH$-norm controls the $L^2(p)$-norm: any sequence converging in~$\cH$ thus converges in~$L^2(p)$. Indeed, if $f \in \cH$:
\begin{equation}
 \textstyle \|f \|_{L^2(p)}^2  = \int f(x)^2 dp(x) = \int \langle f,  \Phi(x) \rangle_\cH ^2 dp(x) \leq  \| f\|^2_\cH \int \|\Phi(x)\|^2_\cH dp(x) \leq M_\cH \|f \|^2_\cH. \label{eqn:norms}
\end{equation}
We also assume that $r \in L^2(p)$. The non-parametric TD(0) algorithm in the RKHS~$\cH$ is defined as follows~\cite{ormoneit2002kernel, koppel2020policy}. Draw a sequence $(x_n)_{n\geq 0}$ according to the Markov chain with initial distribution~$p$, and collect the corresponding rewards $(r(x_n))_{n \geq 0}$. Define a sequence of non-negative step sizes $(\rho_n)_{n \geq 1}$. We build recursively a sequence of approximate value functions $(V_n)_{n \geq 0}$ in $L^2(p)$. Throughout the paper, we take $V_0 = 0$ for simplicity, but note that all the results can be adapted  to the case $V_0 \in \cH$. For $n \geq 1$:
\begin{align}\forall y \in \cX, \quad V_{n}(y) = V_{n-1}(y) + \rho_{n} \Big[ r(x_{n}) + \gamma V_{n-1}(x_{n}') - V_{n-1}(x_{n}) \Big] K(x_{n}, y) , \label{eqn:fct_value} \end{align}
where $x'_n := x_{n+1}$. The term in brackets is called a temporal-difference. Equivalently, in the RKHS:
\begin{align}
    V_{n} = V_{n-1} + \rho_{n} \Big[ r(x_{n}) + \gamma V_{n-1}(x_{n}') - V_{n-1}(x_{n}) \Big] \Phi(x_{n}). \label{eqn:TD}
\end{align}
This update has a running time complexity of $O(n^2)$, which can be improved to $O(n)$, \textit{e.g.}~using Nyström approximation or random features~\cite{halko2011finding}. More details on the implementation are given in App.~\ref{subsec:implem}. This non-parametric formulation is a natural extension of the tabular TD algorithm. Indeed, if $\cX$ is a countable set and $K(x, y) = \mathbf{1}_{x = y}$ is a Dirac kernel -- a valid positive-definite kernel -- then we  exactly recover tabular TD: the update rule~(\ref{eqn:fct_value}) becomes, after observing a transition $(i, i', r_i):=(x_n, x'_n, r(x_n))$:
\begin{align}
     \quad V_{n}(i) = V_{n-1}(i) + \rho_{n} \Big[ r_i + \gamma V_{n-1}(i') - V_{n-1}(i) \Big] ,  \quad \text{and~ } \forall j \neq i, ~ V_{n}(j) = V_{n-1}(j). \label{eqn:tab_TD}
\end{align}
This also covers the \textit{semi-gradient} formulation of TD for linear function approximation~\cite{sutton2018reinforcement}. Suppose~$\cH$ has finite dimension~$d$, then $V_n$ can be identified to $\xi_n \in \Rr^d$, and we are searching for an approximation of the form $V_n(x) = \xi_n^\top \Phi(x)$. Then \eqref{eqn:TD} becomes:
\begin{align}
    \xi_n = \xi_{n-1} + \rho_n  \Big[ r(x_{n}) + \gamma V_{n-1}(x_{n}') - V_{n-1}(x_{n}) \Big] \nabla_\xi V_n(x_{n}). \label{eqn:lin_TD}
\end{align}
Since $V_0\in \cH$, all the iterates $V_n$ are in the RKHS, in particular $V_n \in \text{span}\{\Phi(x_k) \}_{1\leq k \leq n}$. Consequently, if the sequence $(V_n)$ converges in the topology induced by the $L^2(p)$-norm, it converges in $\overline \cH$, the closure of $\cH$ with respect to the $L^2(p)$-norm. 
In particular, for a dense RKHS and because~$p$ has full support on~$\cX$, $\overline \cH= L^2(p)$, but in general it only holds that~$\overline \cH \subset L^2(p)$.

To understand the behavior of the algorithm, we will first consider the \textit{population} version  (also called \textit{mean-path} in~\cite{bhandari2018finite}) of the algorithm: set $V_0= 0$ and for $n \geq 1$:
\begin{align}
    V_n = V_{n-1} + \rho_{n} \Ee_{(x, x')\sim q} \left[ \left(r(x) + \gamma  V_{n-1}(x') -  V_{n-1}(x) \right) \Phi(x) \right] , \label{eqn:mean_TD}
\end{align}
where the expectation is taken with respect to $q(dx, dx'):= p(dx) \kappa(x, dx')$. Since $V_{n-1} \in \cH$, the reproducing property holds: $V_{n-1}(x) = \langle V_{n-1}, \Phi(x) \rangle_\cH$. Hence the update is affine and reads: $V_n = V_{n-1} + \rho_n (A V_{n-1} +b)$, with $A := \Ee_q \left[\gamma \Phi(x) \otimes \Phi(x') - \Phi(x) \otimes \Phi(x) \right]$ and $b := \Ee_p \left[ r(x) \Phi(x)\right]$, where $\otimes$ denotes the outer product in~$\cH$ defined by $g \otimes h : f \mapsto \langle f, h \rangle_\cH g$.

\subsection{Covariance operators}

Assume that the expectations $\Sigma:=\Ee_p[\Phi(x) \otimes \Phi(x)]$
and $\Sigma_1 := \Ee_{q}[\Phi(x) \otimes \Phi(x')]$ are well-defined. $\Sigma$ and $\Sigma_1$ are the uncentered auto-covariance operators of order 0 and 1 of the Markov process $(x_n)_{n \geq 1}$, under the invariant  distribution~$p$. They are operators from $\cH$ to $\cH$, such that, for all $f, g \in \cH$, using the reproducing property:
\begin{align}
\begin{split}
    \Ee_p [f(x)g(x)] &= \Ee_p[ \langle f, \Phi(x) \rangle_\cH \langle g, \Phi(x) \rangle_\cH ] = \langle f, \Ee_p[ \langle  g, \Phi(x) \rangle_\cH \Phi(x)]  \rangle_\cH = \langle f, \Sigma g \rangle_\cH   \\
    \Ee_q [f(x)g(x')] &=\Ee_q[ \langle f, \Phi(x) \rangle_\cH \langle g, \Phi(x') \rangle_\cH ] = \langle f, \Ee_p[ \langle  g, \Phi(x') \rangle_\cH \Phi(x)]  \rangle_\cH = \langle f, \Sigma_1 g \rangle_\cH. \label{eqn:cov_def}
\end{split}
\end{align}
In particular, for all $y \in \cX$ and $f \in \cH$, $(\Sigma f)(y) = \langle \Phi(y), \Sigma f \rangle_\cH = \Ee_p[f(x) K(x, y)]$ and similarly, $(\Sigma_1 f)(y) = \Ee_q[f(x') K(x, y)]$. Following~\cite{dieuleveut2016nonparametric}, $\Sigma$ and $\Sigma_1$ can therefore be extended to operators $\Sigma^e$ and $\Sigma_1^e$ from $L^2(p)$ to $L^2(p)$ defined by:
\begin{align}
\begin{split}
    \Sigma^{e} : f \mapsto \int_\cX f(x) \Phi(x) p(dx), ~ &\text{such that } \forall y\in \cX,~ (\Sigma^{e} f)(y) = \Ee_p[f(x)K(x, y)]  \\
    \Sigma_1^{e} : f \mapsto \iint_{\cX^2} f(x') \Phi(x) q(dx, dx'), ~ &\text{such that } \forall y\in \cX,~ (\Sigma_1^{e} f)(y) = \Ee_q[f(x')K(x, y)]. \label{eqn:cov_ext}
\end{split}
\end{align}
These two operators are the  building blocks of the TD iteration~(\ref{eqn:mean_TD}). In particular, $A = \gamma \Sigma_1 - \Sigma$ and $b= \Sigma^e r$, the latter being valid for $r \in L^2(p)$. With a slight abuse of notation, we denote simply as $\Sigma$, $\Sigma_1$ the extended operators. Furthermore~\cite{dieuleveut2016nonparametric}, $\text{Im}(\Sigma) \subset \cH$ and $\Sigma^{1/2}$ is an isometry from $L^2(p)$ to $\cH$: 
\begin{align}
    \forall f \in \overline{\cH}, \quad \|f \|_{L^2(p)} = \| \Sigma^{1/2} f \|_\cH. \label{eqn:isometry}
\end{align}

The fact that~$p$ is a stationary distribution for~$\kappa$ implies a particular constraint linking~$\Sigma$ and $\Sigma_1$:
\begin{lemma}
There exists a unique bounded linear operator $\tilde \Sigma_1 : \cH \rightarrow \cH$ such that  $\Sigma_1 = \Sigma^{1/2} \tilde \Sigma_1 \Sigma^{1/2}$ on $\overline{\cH}$, and $\| \tilde \Sigma_1 \|_{\rm op} \leq 1$ ($\| \cdot \|_{\rm op}$ is the $\cH$-operator norm). \label{prop1}
\end{lemma}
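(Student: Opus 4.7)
The plan is to represent the cross-covariance bilinear form $(f,g) \mapsto \langle f, \Sigma_1 g\rangle_\cH$ in a way that factors through the $L^2(p)$-norm, control it by Cauchy--Schwarz using that $p$ is stationary, transport the bound into $\cH$ via the isometry \eqref{eqn:isometry}, and conclude by a Riesz representation argument.

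First, for $f, g \in \cH$, I would combine \eqref{eqn:cov_def} with the stationarity of $p$ under $\kappa$ (so that both marginals of $q(dx, dx') = p(dx)\kappa(x, dx')$ equal $p$) to write $\langle f, \Sigma_1 g\rangle_\cH = \Ee_q[f(x) g(x')]$, and then apply Cauchy--Schwarz in $L^2(q)$:
\begin{equation*}
|\langle f, \Sigma_1 g\rangle_\cH| \leq \Ee_p[f(x)^2]^{1/2}\, \Ee_p[g(x')^2]^{1/2} = \|f\|_{L^2(p)} \|g\|_{L^2(p)}.
\end{equation*}
By \eqref{eqn:isometry}, the right-hand side equals $\|\Sigma^{1/2} f\|_\cH \|\Sigma^{1/2} g\|_\cH$, which is the key inequality driving the rest of the argument.

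Next, I would introduce a bilinear form $T$ on $\text{Range}(\Sigma^{1/2}) \times \text{Range}(\Sigma^{1/2}) \subset \cH \times \cH$ by setting $T(\Sigma^{1/2} f, \Sigma^{1/2} g) := \langle f, \Sigma_1 g\rangle_\cH$. The main subtle point, which I expect to be the principal technical obstacle, is well-definedness: if $\Sigma^{1/2} f_1 = \Sigma^{1/2} f_2$, then $\|f_1 - f_2\|_{L^2(p)} = 0$ by \eqref{eqn:isometry}, and the Cauchy--Schwarz bound then forces $\langle f_1 - f_2, \Sigma_1 g\rangle_\cH = 0$ (and symmetrically in $g$). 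The same bound yields $|T(u, v)| \leq \|u\|_\cH \|v\|_\cH$, so $T$ extends by continuity to $\overline{\text{Range}(\Sigma^{1/2})}$, and then to all of $\cH \times \cH$ by setting it to zero whenever either argument lies in the orthogonal complement of that closure.

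Finally, Riesz representation applied to the continuous linear form $v \mapsto T(u, v)$ produces a unique bounded operator $\tilde \Sigma_1 : \cH \to \cH$ with $\|\tilde \Sigma_1\|_{\rm op} \leq 1$ such that $T(u, v) = \langle u, \tilde \Sigma_1 v\rangle_\cH$ for all $u, v \in \cH$. Unwinding the definition gives, for all $f, g \in \cH$,
\begin{equation*}
\langle f, \Sigma_1 g\rangle_\cH = T(\Sigma^{1/2} f, \Sigma^{1/2} g) = \langle f, \Sigma^{1/2} \tilde \Sigma_1 \Sigma^{1/2} g\rangle_\cH,
\end{equation*}
hence $\Sigma_1 = \Sigma^{1/2} \tilde \Sigma_1 \Sigma^{1/2}$ on $\cH$, which I would extend to $\overline{\cH}$ by density using the continuous extension of $\Sigma_1$ from \eqref{eqn:cov_ext}. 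Uniqueness is inherited from the Riesz construction, understood modulo the orthogonal complement of $\overline{\text{Range}(\Sigma^{1/2})}$ in $\cH$; this complement is trivial as soon as $\Sigma^{1/2}$ has dense range in $\cH$, e.g., when the embedding $\cH \hookrightarrow L^2(p)$ is injective, as happens for universal kernels with $p$ of full support.
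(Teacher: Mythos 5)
Your argument is correct, but it follows a genuinely different route from the paper: the paper does not construct $\tilde\Sigma_1$ at all, it simply invokes Baker's theorem on cross-covariance operators \cite[Thm.~1]{baker1973joint} applied to the pair $(\Phi(x),\Phi(x'))$ under $q$, extends by continuity to $\overline{\cH}$, and offers the PSD block-matrix/Schur-complement picture only as finite-dimensional intuition. You instead reprove the needed special case from scratch: stationarity of $p$ makes both marginals of $q$ equal to $p$, Cauchy--Schwarz bounds the form $(f,g)\mapsto\Ee_q[f(x)g(x')]=\langle f,\Sigma_1 g\rangle_\cH$ by $\|f\|_{L^2(p)}\|g\|_{L^2(p)}=\|\Sigma^{1/2}f\|_\cH\|\Sigma^{1/2}g\|_\cH$ via the isometry \eqref{eqn:isometry}, and the resulting bounded bilinear form on $\overline{\mathrm{Range}(\Sigma^{1/2})}$ is represented by Riesz, giving $\|\tilde\Sigma_1\|_{\rm op}\le 1$, the factorization on $\cH$ by self-adjointness of $\Sigma^{1/2}$, and the extension to $\overline{\cH}$ by density (the right-hand side is $L^2(p)$-continuous, again by the isometry). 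What your route buys is self-containedness and an explicit localization of where stationarity enters; it also surfaces a point the lemma's wording glosses over: uniqueness only holds modulo the action of $\tilde\Sigma_1$ on the orthogonal complement of $\overline{\mathrm{Range}(\Sigma^{1/2})}$, which is precisely why Baker's theorem imposes the normalization that the operator vanish off the closed ranges --- your convention of setting the form to zero there reproduces that normalization, and your remark that the complement is trivial for a universal kernel with $p$ of full support is accurate. The one detail to keep in mind (also left implicit by the paper) is the identification of $\Sigma^{1/2}$ acting on $\cH$ with the restriction of the square root of the extended operator on $L^2(p)$, which is what lets you apply \eqref{eqn:isometry} to elements of $\cH$ and match the two ranges when passing to $\overline{\cH}$.
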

 This results from an application of~\cite[Thm.~1]{baker1973joint}, valid on~$\cH$ and extended by continuity to~$\overline{\cH}$. See also~\cite{fukumizu2004dimensionality} for an exposition of cross-covariance operators specifically in an RKHS. In finite dimension, this is retrieved with generic results on positive semi-definite (PSD) matrices. Specifically, if $\cH \subset \Rr^m$, the uncentered covariance matrix of the random variable $(\Phi(x), \Phi(x'))$, when $(x, x') \sim q$~is:
\begin{center}$\begin{pmatrix} \Sigma & \Sigma_1 \\
 \Sigma_1^\top & \Sigma \end{pmatrix} \succeq 0. $\end{center}
 
Using a classical condition on block matrices~\cite[Prop.~1.3.2]{bhatia2013matrix}, this matrix is PSD if and only if there exists a matrix $\tilde \Sigma_1$ such that $\|\tilde \Sigma_1\|_{\rm op} \leq 1$ and $\Sigma_1 = \Sigma^{1/2} \tilde \Sigma_1 \Sigma^{1/2}$ ($\| \cdot \|_{\rm op}$ is also the spectral norm in this case). This corresponds to the fact that the Schur complement of a PSD block matrix is also PSD. 

\textbf{Assumptions on~$\Sigma$ and~$V^*$.} We assume that $x\mapsto K(x, x)$ is uniformly bounded by~$M_\cH$. Therefore, the eigenvalues of~$\Sigma$ are upper-bounded. However, unlike~\cite{tsitsiklis1997analysis} and \cite{bhandari2018finite}, we do not assume them to be lower-bounded, \textit{i.e.},~$\Sigma \succeq 0$ is not invertible in general. We will formulate our convergence results for two sets of  assumptions.   The first one recovers known results from~\cite{bhandari2018finite} for linear function approximation. The second one assumes that~$V^*$ verifies a \textit{source condition}~\cite[Chap.~1]{dieuleveut2017stochastic}:
\setlist{nolistsep} \begin{enumerate}[label=\textbf{(A\arabic*)}]
    \item \label{hypo:?} $V^* \in \cH$, $\cH$ is finite-dimensional and $\Sigma$ has full-rank;
    \item \label{hypo:??}$V^* \in \Sigma^{\theta/2}(\cH)$ for some $\theta \in (-1, 1]$ (and consequently, $\| \Sigma^{-\theta/2} V^*\|_\cH<+\infty$), and $\overline{\cH} = L^2(p)$ (\textit{i.e.}, $K$ is a universal kernel).
\end{enumerate}
In \ref{hypo:?}, $\cH$ is finite-dimensional because $\Sigma$ cannot be simultaneously compact ($x\mapsto K(x, x)$ being uniformly bounded) and invertible in infinite-dimension~\cite{cheney2001analysis}.  Recalling the isometry property~(\ref{eqn:isometry}), the case $\theta=-1$ always holds in \ref{hypo:??} because $V^* \in L^2(p)$ (which we prove in the next subsection). The case~$\theta = 0$ is equivalent to $V^* \in \cH$. For $\theta >0$, it must be interpreted as: $\| \Sigma^{-\theta/2} V^* \|_\cH^2 := \inf \{ \| V \|^2_\cH ~|~ V \text{ s.t. } V^* = \Sigma^{\theta/2} V \}$, with $\| \Sigma^{-\theta/2} V^* \|_\cH = + \infty$ if $V^* \notin \Sigma^{\theta/2} (\cH)$. Using a universal approximation removes the need for a projection operator on $\overline{\cH}$, as typically used for finite-dimensional function approximation, and hence there will be no projection error~\cite{tsitsiklis1997analysis}.

\subsection{Non-expansiveness of the Bellman operator} \label{subseq}

It is known that the value function~$V^*$ of the MRP is a fixed point of the Bellman operator $T$. We define two operators $P$ and $T: L^2(p) \rightarrow L^2(p)$ by, for $V \in L^2(p)$, 
    $PV(x) = \Ee_{x'\sim \kappa(x, \cdot)} V(x')$  and $TV(x) = r(x) + \gamma PV(x)$.
Both operators can be expressed in terms of~$\Sigma$ and $\Sigma_1$. For $V \in L^2(p)$:
\begin{align}\left\{
    \begin{array}{l}
    \Sigma PV = \Ee_p[\Phi(x) (PV)(x)] = \Ee_q [\Phi(x) V(x')] = \Sigma_1 V \\
    \Sigma TV = \Sigma r + \gamma \Sigma_1 V.
    \label{eqn:bellman_sigma}
    \end{array}\right.
\end{align}
\begin{lemma}\label{prop2}
For any $V \in L^2(p)$: $    \| PV \|_{L^2(p)} \leq \| V \|_{L^2(p)} $.
\end{lemma}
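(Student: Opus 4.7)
The plan is to prove non-expansiveness by a standard two-step argument: first apply Jensen's inequality pointwise to turn $(PV)(x)^2$ into a conditional expectation of $V(x')^2$, then use the stationarity of $p$ under $\kappa$ to collapse the resulting double integral back to a plain $L^2(p)$ norm.

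More concretely, I would start by observing that, by definition, $(PV)(x) = \mathbb{E}_{x' \sim \kappa(x,\cdot)}[V(x')]$, so Jensen's inequality applied to the convex map $u \mapsto u^2$ (with respect to the probability measure $\kappa(x,\cdot)$) gives
\begin{equation*}
(PV)(x)^2 \;\leq\; \mathbb{E}_{x' \sim \kappa(x,\cdot)}\bigl[V(x')^2\bigr] \;=\; \int_{\cX} V(x')^2 \, \kappa(x, dx').
\end{equation*}
Integrating this inequality against $p(dx)$ and using Fubini (all integrands are nonnegative measurable, so no integrability issue arises before the final step), I get
\begin{equation*}
\|PV\|_{L^2(p)}^2 \;\leq\; \iint_{\cX^2} V(x')^2 \, \kappa(x, dx')\, p(dx) \;=\; \int_{\cX} V(x')^2 \left( \int_{\cX} \kappa(x, dx')\, p(dx) \right).
\end{equation*}

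The stationarity of $p$ with respect to $\kappa$ (stated just before the definition of the value function) is exactly that $\int_{\cX} \kappa(x, A)\, p(dx) = p(A)$ for every measurable $A$, i.e.\ the marginal of $q(dx, dx') = p(dx)\kappa(x, dx')$ with respect to $x'$ is $p$ itself. Substituting this into the previous display yields
\begin{equation*}
\|PV\|_{L^2(p)}^2 \;\leq\; \int_{\cX} V(x')^2\, p(dx') \;=\; \|V\|_{L^2(p)}^2,
\end{equation*}
which after taking square roots gives the desired bound. Taking square roots is legitimate since both sides are nonnegative, and the right-hand side is finite because $V \in L^2(p)$; this simultaneously shows that $PV$ is well-defined in $L^2(p)$.

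I do not anticipate any real obstacle: the only subtle point is ensuring that the stationarity hypothesis is invoked in the correct direction (stationarity gives exactly the pushforward identity needed, not its reverse), and that Jensen's inequality is applied with respect to the conditional probability $\kappa(x,\cdot)$ rather than $p$. Both are routine, and in particular no assumption on $\cH$, on the kernel, or on reversibility of the Markov chain is used here — only that $p$ is an invariant distribution.
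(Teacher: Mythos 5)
Your proof is correct and follows essentially the same route as the paper's: pointwise Jensen's inequality with respect to $\kappa(x,\cdot)$, Fubini--Tonelli for the nonnegative integrand, and the stationarity identity $\int_\cX \kappa(x,A)\,p(dx) = p(A)$ to collapse the double integral. The additional remarks on well-definedness of $PV$ in $L^2(p)$ and on the direction in which stationarity is used are accurate but not needed beyond what the paper already does.
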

This is a direct reformulation of~\cite[Lemma 1]{tsitsiklis1997analysis}, the proof of which is given in  App.~\ref{sec:proofs_gen}. 
As stressed by~\cite{tsitsiklis1997analysis}, this strongly relies on the fact that~$p$ is a stationary distribution of the Markov chain. It implies that~$T$ is a $\gamma$-contraction mapping on~$L^2(p)$ and has as unique fixed point~$V^*$. One can check that if $\Sigma$ is non-singular, Lemma~\ref{prop2} is exactly equivalent to $\| \Sigma^{-1/2} \Sigma_1 \Sigma^{-1/2} \|_{\rm op} \leq 1$, that is, Lemma~\ref{prop1}. Moreover, using Lemma~\ref{prop2}, we obtain $
    \| V^* \|_{L^2(p)} \leq \| r\|_{L^2(p)}/(1-\gamma)$ and $V^* \in L^2(p)$.

\section{Analysis of a continuous-time version of the population TD algorithm} \label{sec:mean_path}

Before considering regularized TD with stochastic samples, we look at simplified versions of the algorithm that  momentarily remove the difficulties related to stochastic approximation. Specifically, we consider the population version of TD to capture a ``mean'' behavior, and a continuous-time algorithm to avoid choosing step sizes. Instead, we  focus on the role of the regularization parameter.

\vspace{-0.5em}

\subsection{Existence of a fixed-point for regularized TD}

\vspace{-0.5em}

For $\lambda\geq 0$, let us consider the regularized population recursion:
\begin{align} V_n = V_{n-1} + \rho_{n} (\Sigma r +(\gamma \Sigma_1 - \Sigma - \lambda I)  V_{n-1} ). \label{eqn:lbda_TD}
\end{align}
If the TD iterations converge, its limit will be a solution of the \textit{regularized} fixed point equation: \begin{align}
    \Sigma r + (\gamma \Sigma_1 - \Sigma - \lambda I) V = 0. \label{eqn:fixed}
\end{align}

\begin{proposition} 
If $\lambda >0$, then $\gamma \Sigma_1 - \Sigma - \lambda I $ is non-singular on~$\cH$ and the fixed point equation~(\ref{eqn:fixed}) admits a unique solution $V^*_\lambda$ in $L^2(p)$, defined by $ V^*_\lambda = (\gamma \Sigma_1 - \Sigma - \lambda I )^{-1} \Sigma r $. Furthermore, $V^*_\lambda \in \cH$~and:\label{prop3} 
\begin{align}
    \|V^*_\lambda \|_\cH \leq \frac{\| \Sigma r \|_\cH}{\lambda} \leq \frac{\sqrt{M_\cH}\|  r \|_{L^2(p)}}{\lambda }. \label{eqn:lbda_norm}
\end{align} 
\end{proposition}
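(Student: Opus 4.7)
The core idea is to exploit Lemma~\ref{prop1} to show that the operator $B := \Sigma + \lambda I - \gamma \Sigma_1$ is strongly coercive on $\cH$, which yields invertibility and a control on $\|B^{-1}\|_{\rm op}$. The fixed point then writes $V_\lambda^* = B^{-1}\Sigma r$, and both the membership $V_\lambda^* \in \cH$ and the norm estimate follow from the fact that $\Sigma r \in \cH$ together with the bound $\|\Sigma r\|_\cH \leq \sqrt{M_\cH}\|r\|_{L^2(p)}$ obtained from \eqref{eqn:norms}.

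\textbf{Coercivity step.} For any $f \in \cH$, using Lemma~\ref{prop1} to write $\Sigma_1 = \Sigma^{1/2} \tilde \Sigma_1 \Sigma^{1/2}$ with $\|\tilde\Sigma_1\|_{\rm op}\le 1$, I would compute
\begin{align*}
\langle f, Bf\rangle_\cH &= \|\Sigma^{1/2} f\|_\cH^2 + \lambda\|f\|_\cH^2 - \gamma \langle \Sigma^{1/2} f,\tilde \Sigma_1 \Sigma^{1/2} f\rangle_\cH \\
&\geq (1-\gamma)\|\Sigma^{1/2} f\|_\cH^2 + \lambda\|f\|_\cH^2 \geq \lambda \|f\|_\cH^2,
\end{align*}
where the first inequality uses Cauchy--Schwarz together with $\|\tilde\Sigma_1\|_{\rm op}\le 1$. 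By Cauchy--Schwarz $\|Bf\|_\cH \geq \lambda \|f\|_\cH$, so $B$ is injective with closed range. Running the same computation on $B^*=\Sigma + \lambda I - \gamma \Sigma_1^*$ (noting $\Sigma_1^* = \Sigma^{1/2}\tilde\Sigma_1^* \Sigma^{1/2}$ and $\|\tilde\Sigma_1^*\|_{\rm op}\le 1$) gives that $B^*$ is also injective, so $\mathrm{Range}(B)^\perp = \ker B^* = \{0\}$ and $B$ is bijective on $\cH$ with $\|B^{-1}\|_{\rm op}\le 1/\lambda$.

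\textbf{Solving the fixed-point equation.} Since $\mathrm{Im}(\Sigma^e)\subset \cH$ (recalled just before \eqref{eqn:isometry}), we have $\Sigma r\in\cH$, so one may define $V_\lambda^* := B^{-1}(\Sigma r)\in \cH$, which by construction satisfies \eqref{eqn:fixed}. For uniqueness in $L^2(p)$, suppose $V\in L^2(p)$ solves \eqref{eqn:fixed}. Rearranging gives $\lambda V = \Sigma r + (\gamma \Sigma_1 - \Sigma)V$, whose right-hand side lies in $\cH$ since the images of $\Sigma^e$ and $\Sigma_1^e$ are contained in $\cH$; as $\lambda>0$, this forces $V\in\cH$, and uniqueness then reduces to the invertibility of $B$ on $\cH$.

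\textbf{Norm bound.} The operator bound gives directly $\|V_\lambda^*\|_\cH \leq \|\Sigma r\|_\cH/\lambda$. To replace $\|\Sigma r\|_\cH$ by an $L^2(p)$-quantity, I would test against any $f\in\cH$ with $\|f\|_\cH\le 1$: by the reproducing property, $\langle f, \Sigma r\rangle_\cH = \E_p[f(x) r(x)] \leq \|f\|_{L^2(p)}\|r\|_{L^2(p)} \leq \sqrt{M_\cH}\|r\|_{L^2(p)}$ using \eqref{eqn:norms}, and taking the supremum over $f$ yields $\|\Sigma r\|_\cH \leq \sqrt{M_\cH}\|r\|_{L^2(p)}$, which combines with the previous estimate to give \eqref{eqn:lbda_norm}. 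The only potential subtlety is the passage between $\cH$ and $L^2(p)$ in the uniqueness argument—making sure the extended operators $\Sigma^e,\Sigma_1^e$ take values in $\cH$—but this is precisely the content of $\mathrm{Im}(\Sigma^e)\subset \cH$ already recalled in the paper.
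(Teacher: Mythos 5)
Your proof is correct, and the main invertibility step takes a genuinely different (and arguably more streamlined) route than the paper. The paper's Lemma~\ref{lemma1} proceeds in two separate stages: it factors $\Sigma+\lambda I-\gamma\Sigma_1=(\Sigma+\lambda I)^{1/2}\bigl[I-\gamma(\Sigma+\lambda I)^{-1/2}\Sigma^{1/2}\tilde\Sigma_1\Sigma^{1/2}(\Sigma+\lambda I)^{-1/2}\bigr](\Sigma+\lambda I)^{1/2}$, invokes a Neumann-series argument to invert the bracket (whose norm is at most $\gamma<1$), and then separately obtains $\|B^{-1}\|_{\rm op}\le 1/\lambda$ by expanding $\|(\lambda I+\Sigma-\gamma\Sigma_1)f\|_\cH^2$ and using the stationarity bound $\langle f,\Sigma_1 f\rangle_\cH\le\langle f,\Sigma f\rangle_\cH$. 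You instead establish the single coercivity estimate $\langle f,Bf\rangle_\cH\ge(1-\gamma)\|\Sigma^{1/2}f\|_\cH^2+\lambda\|f\|_\cH^2\ge\lambda\|f\|_\cH^2$ (via Lemma~\ref{prop1} and Cauchy--Schwarz; one could equally use the stationarity inequality directly) and deduce both bijectivity (bounded below $\Rightarrow$ injective with closed range, plus the same coercivity for $B^*$ killing $\mathrm{Range}(B)^\perp$ -- in this real Hilbert space $\langle f,B^*f\rangle_\cH=\langle f,Bf\rangle_\cH$, so no extra computation is even needed) and the bound $\|B^{-1}\|_{\rm op}\le1/\lambda$ from one inequality; this is essentially a Lax--Milgram argument and avoids the factorization and series altogether. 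The remaining steps -- any $L^2(p)$ solution lies in $\cH$ because $\lambda V=\Sigma r+(\gamma\Sigma_1-\Sigma)V$ with the extended operators mapping into $\cH$ (for $\Sigma_1^e$ this follows, e.g., from $\Sigma_1^e=\Sigma^e P$ and Lemma~\ref{prop2}, or from boundedness of the Bochner integral), uniqueness from invertibility on $\cH$, and $\|\Sigma r\|_\cH\le\sqrt{M_\cH}\|r\|_{L^2(p)}$ by duality -- match the paper's proof (the last inequality is not even spelled out there). What your approach buys is economy and a single mechanism delivering both invertibility and the quantitative bound; what the paper's factorization buys is an explicit Neumann-series representation of the inverse, which is reused in spirit in the proof of Prop.~\ref{prop4}.
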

The proof is in App.~\ref{sec:proofs_pop}. 
Hence, for $\lambda>0$, the $\cH$-norm of $V^*_\lambda$ is always bounded, unlike $\|V^*\|_\cH$.

\subsection{Convergence of the regularized fixed point to the optimal value function}
Recalling that~$V^* \in L^2(p)$,
it satisfies the relation $TV^* = V^*$,
implying that $\Sigma TV^* = \Sigma V^*$, \textit{i.e.}, $\Sigma r + (\gamma \Sigma_1 - \Sigma) V^* = 0$. This \textit{unregularized} fixed point equation  possibly has other solutions, but if~$K$ is a universal kernel, as assumed by~\ref{hypo:??}, then~$\Sigma$ is injective~\cite{steinwart2001influence} and~$V^*$ is the unique solution.  Let us recall that~\ref{hypo:??} does not
imply that $V^*$ has a bounded $\cH$-norm. 
However,  we can control the $L^2(p)$-norm of $V^*_\lambda - V^*$ when $\lambda$ is small using the \textit{source condition} \ref{hypo:??}.

\begin{proposition} \label{prop4}
Assume that $\lambda >0$ and assumption~\ref{hypo:??}.
Then:
\begin{align}
    \| V_\lambda^* - V^* \|_{L^2(p)}^2 \leq \frac{\lambda^{\theta+1} }{(1-\gamma)^2} \| \Sigma^{-\theta/2} V^*\|_\cH^2. \label{eqn:cv_lambda}
\end{align}
\end{proposition}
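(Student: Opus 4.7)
The plan is to derive a fixed-point equation for $\Delta := V^* - V_\lambda^*$ in $L^2(p)$, transport it into $\cH$ through the isometry~\eqref{eqn:isometry}, then reduce to the $\gamma=0$ situation by a short perturbation argument before finishing with the classical ridge-regression spectral estimate.

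First I would subtract the two fixed-point identities. Since $TV^*=V^*$ yields $\Sigma r + (\gamma\Sigma_1-\Sigma)V^* = 0$, combining this with~\eqref{eqn:fixed} gives $(\Sigma - \gamma\Sigma_1 + \lambda I)\Delta = \lambda V^*$ in $L^2(p)$. Applying $\Sigma^{1/2}$ and invoking the factorization $\Sigma_1 = \Sigma^{1/2}\tilde\Sigma_1\Sigma^{1/2}$ from Lemma~\ref{prop1} yields the commutation $\Sigma^{1/2}(\Sigma - \gamma\Sigma_1) = \Sigma(I-\gamma\tilde\Sigma_1)\Sigma^{1/2}$, so the equation rewrites in $\cH$ as
\begin{equation*}
(\Sigma\tilde A + \lambda I)\,\delta \;=\; \lambda\,\Sigma^{(\theta+1)/2}\,u,
\end{equation*}
where $\tilde A := I - \gamma\tilde\Sigma_1$, $\delta := \Sigma^{1/2}\Delta$, and $u\in\cH$ is provided by~\ref{hypo:??} with $V^* = \Sigma^{\theta/2}u$ and $\|u\|_\cH = \|\Sigma^{-\theta/2}V^*\|_\cH$. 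The isometry~\eqref{eqn:isometry} identifies $\|\delta\|_\cH = \|\Delta\|_{L^2(p)}$, reducing the problem to bounding $\|\delta\|_\cH$.

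Next I would compare $\delta$ with the unperturbed companion $\delta_0 := \lambda(\Sigma+\lambda I)^{-1}\Sigma^{(\theta+1)/2}u$, the solution of the same equation at $\gamma=0$. Subtracting the two equations and rearranging gives $\delta - \delta_0 = \gamma(\Sigma+\lambda I)^{-1}\Sigma\,\tilde\Sigma_1\,\delta$. Since $(\Sigma+\lambda I)^{-1}\Sigma$ has $\cH$-eigenvalues $\mu/(\mu+\lambda)\in[0,1]$ and $\|\tilde\Sigma_1\|_{\rm op}\leq 1$ by Lemma~\ref{prop1}, one gets $\|\delta-\delta_0\|_\cH \leq \gamma\|\delta\|_\cH$ and the triangle inequality delivers $\|\delta\|_\cH \leq \|\delta_0\|_\cH / (1-\gamma)$. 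Expanding $\delta_0$ in the $\cH$-eigenbasis of $\Sigma$ (which is compact since $\operatorname{tr}\Sigma \leq M_\cH$) and using the elementary bound $(\mu+\lambda)^2 \geq \mu^{\theta+1}\lambda^{1-\theta}$ for $\theta\in[-1,1]$, which follows from $(\mu+\lambda)^{1+\theta}\geq\mu^{1+\theta}$ and $(\mu+\lambda)^{1-\theta}\geq\lambda^{1-\theta}$, yields $\|\delta_0\|_\cH^2 \leq \lambda^{\theta+1}\|u\|_\cH^2$. Combining the three estimates produces~\eqref{eqn:cv_lambda}.

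The step requiring the most care is the operator manipulation in the first paragraph: the commutation relating $\Sigma^{1/2}$ and $\Sigma - \gamma\Sigma_1$, together with the identification $\|\delta\|_\cH = \|\Delta\|_{L^2(p)}$, uses $\Sigma^{1/2}$ simultaneously as a self-adjoint operator on $\cH$ and as the isometric extension $\overline{\cH}\to\cH$ of~\eqref{eqn:isometry}. This is where the universality assumption $\overline{\cH}=L^2(p)$ in~\ref{hypo:??} enters essentially, since $\Delta = V^* - V_\lambda^*$ lives in $L^2(p)$ but not a priori in $\cH$; everything downstream is then routine spectral arithmetic.
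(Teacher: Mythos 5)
Your proof is correct and takes essentially the same route as the paper's: subtract the two fixed-point equations, transport them into $\cH$ via $\Sigma^{1/2}$ and the factorization $\Sigma_1=\Sigma^{1/2}\tilde\Sigma_1\Sigma^{1/2}$ of Lemma~\ref{prop1} (extended to $\overline{\cH}=L^2(p)$ by universality), absorb the $\gamma\Sigma\tilde\Sigma_1$ term at cost $1/(1-\gamma)$, and conclude with the ridge-type spectral bound $\lambda\mu^{(1+\theta)/2}/(\mu+\lambda)\leq\lambda^{(1+\theta)/2}$. The only cosmetic deviations are that the paper inverts $I-\gamma(\Sigma+\lambda I)^{-1}\Sigma\tilde\Sigma_1$ through a Neumann series and uses operator inequalities in place of your eigenbasis expansion, while you use a comparison with the $\gamma=0$ solution plus a triangle-inequality rearrangement (legitimate since $\|\delta\|_\cH=\|V^*-V^*_\lambda\|_{L^2(p)}<\infty$ by the isometry~(\ref{eqn:isometry})); both yield the identical constant.
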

The proof in App.~\ref{sec:proofs_pop} 
is inspired by similar results~\cite{caponnetto2007optimal, cucker2007learning} in the context of ridge regression (recovered for $\gamma=0$). 
Note that only $\|V^*_\lambda - V^*\|_{L^2(p)}$ is controlled, not $\|V^*_\lambda - V^*\|_\cH$. 
Consequently, we obtain the convergence of $V^*_{\lambda}$ to $V^*$ in $L^2(p)$-norm when $\lambda \rightarrow 0$: the higher $\theta$ is, the faster the rate of convergence. For universal Mercer kernels~\cite{cucker2002mathematical}, if we drop the source condition \ref{hypo:??}, using only the fact that $V^* \in L^2(p)$ -- corresponding to $\theta=-1$ in \ref{hypo:??} -- we can still prove that $V^*_\lambda$ converges to $V^*$ in $L^2(p)$-norm when $\lambda \rightarrow 0$, but without an explicit rate (see App.~\ref{sec:proofs_pop}, Cor.~\ref{cor_mercer}).

\subsection{Convergence of continuous-time population TD}

Following the ordinary differential equation (ODE) method~\cite{borkar2000ode}, we study the continuous-time counterpart of the population iteration~(\ref{eqn:lbda_TD}).
 At least formally, this consists in
defining $\widetilde{V}_t$=$V_{n(t)}$
for $t$ and $n(t)$ satisfying  $t= \sum_{i=1}^{n(t)}\rho_i$,
and letting $\rho_i$ tend to $0$ for any $i\geq1 $,
where $V_{n(t)}$ is defined by recursion using \eqref{eqn:lbda_TD}.
With a slight abuse of notation, we use the notation $V_t$ instead of $\widetilde{V}_t$.
We then obtain the following ODE in~$\cH$: $V_0 = 0$ and for $t \geq 0$:
\begin{align}
    \frac{d  V_t}{dt} = (A - \lambda I)  V_t + b. \label{eqn:ode}
\end{align}
We can exhibit a Lyapunov function for this dynamical system, see~\cite{slotine1991applied}.
This implies that $V_t$ converges to $V^*_\lambda$ when $t$ tends to infinity,
where $V^*_\lambda$ is defined in Prop.~\ref{prop3}. 
More precisely, for $\beta \in \{-1, 0\}$,
we define $W^{\beta}$, the Lyapunov function, by $W^\beta(t) := \|\Sigma^{-\beta/2} (V_t - V^*_\lambda) \|_\cH^2$ (please note that~$\beta$'s role in $W^\beta$ is an index, not a power). $W^0(t)$ strictly decreases with $t$ as follows:

\begin{lemma}[Descent Lemma] \label{prop5} For $\lambda > 0$, for all $t \geq 0$, the following holds:
\begin{align}
    \frac{dW^0(t)}{dt} \leq - 2(1- \gamma) W^{ - 1}(t) - 2\lambda W^0(t), \label{eqn:lyap}
\end{align}
\end{lemma}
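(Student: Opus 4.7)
The plan is to introduce the shifted variable $U_t := V_t - V^*_\lambda$. Since $V^*_\lambda$ satisfies $(A - \lambda I) V^*_\lambda + b = 0$ by definition (Prop.~\ref{prop3}), subtracting this from the ODE~\eqref{eqn:ode} yields the homogeneous dynamics $dU_t/dt = (A - \lambda I) U_t$ in $\cH$. Note that $U_t \in \cH$ for all $t \geq 0$: $V_0 = 0 \in \cH$, $b = \Sigma r \in \cH$, and $V^*_\lambda \in \cH$ by Prop.~\ref{prop3}. Then $W^0(t) = \|U_t\|_\cH^2$ and differentiating:
\begin{align*}
    \frac{dW^0(t)}{dt} = 2 \langle U_t, (A - \lambda I) U_t \rangle_\cH = 2\gamma \langle U_t, \Sigma_1 U_t \rangle_\cH - 2 \langle U_t, \Sigma U_t \rangle_\cH - 2 \lambda \|U_t\|_\cH^2.
\end{align*}

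Next I would translate the two covariance inner products into $L^2(p)$ quantities via the identities~\eqref{eqn:cov_def}: $\langle U_t, \Sigma U_t\rangle_\cH = \|U_t\|_{L^2(p)}^2$ and $\langle U_t, \Sigma_1 U_t\rangle_\cH = \Ee_q[U_t(x) U_t(x')] = \langle U_t, P U_t\rangle_{L^2(p)}$. Applying Cauchy--Schwarz followed by the non-expansiveness of $P$ (Lemma~\ref{prop2}) gives
\begin{align*}
    \langle U_t, \Sigma_1 U_t \rangle_\cH \leq \|U_t\|_{L^2(p)} \, \|P U_t\|_{L^2(p)} \leq \|U_t\|_{L^2(p)}^2,
\end{align*}
and hence $2\gamma \langle U_t, \Sigma_1 U_t\rangle_\cH - 2\langle U_t, \Sigma U_t\rangle_\cH \leq -2(1-\gamma) \|U_t\|_{L^2(p)}^2$.

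The final step is to recognize the $L^2(p)$-norm as $W^{-1}(t)$. Since $U_t \in \cH \subset \overline{\cH}$, the isometry~\eqref{eqn:isometry} gives $\|U_t\|_{L^2(p)}^2 = \|\Sigma^{1/2} U_t\|_\cH^2 = W^{-1}(t)$ by the definition of $W^\beta$ with $\beta = -1$. Substituting back produces the claimed inequality~\eqref{eqn:lyap}.

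The only subtle point -- and the main thing to justify carefully -- is the interplay between the $\cH$-valued dynamics and the $L^2(p)$-valued bounds: one must invoke Lemma~\ref{prop2} (which is an $L^2(p)$ statement about the Markov operator $P$) to control a quantity that naturally arises as an $\cH$-inner product, and then convert back to $\cH$ through the isometry. Everything else is a direct computation using the expressions for $A$ and $b$ and the fact that $V^*_\lambda$ is an exact fixed point of the regularized equation.
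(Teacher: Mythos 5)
Your proposal is correct and follows essentially the same route as the paper's proof: shift by the regularized fixed point $V^*_\lambda$, expand the derivative of $W^0$, and bound the cross term $\gamma\langle U_t,\Sigma_1 U_t\rangle_\cH$ via Cauchy--Schwarz, the non-expansiveness of $P$ (Lemma~\ref{prop2}), and the isometry~\eqref{eqn:isometry} identifying $\|U_t\|_{L^2(p)}^2$ with $W^{-1}(t)$. The only cosmetic difference is that you apply Cauchy--Schwarz in $L^2(p)$ after converting $\langle U_t,\Sigma_1 U_t\rangle_\cH=\langle U_t,PU_t\rangle_{L^2(p)}$, while the paper applies it in $\cH$ using $\Sigma_1 V=\Sigma PV$ before invoking the isometry; the two are equivalent.
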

The proof mainly relies on the contraction property of the Bellman operator (see App.~\ref{sec:proofs_pop}). We can then deduce the convergence of the ODE~(\ref{eqn:ode}) to $V^*_\lambda$.

\begin{proposition} \label{prop6}
Under assumption~\textup{\ref{hypo:?}}, the solution~$V_t$ of the ODE~(\ref{eqn:ode}) with~$\lambda = 0$ is such that:
\begin{align}
  \text{For ~}T >0, \quad \|\overline V_T - V^* \|^2_{L^2(p)} \leq \frac{1}{2(1-\gamma)} \frac{\|V^*\|_\cH^2}{T}, \label{eqn:easy_cv}
\end{align}
where $\overline V_T$ is the Polyak-Ruppert average~\cite{polyak1992acceleration} of~$V_t$, defined by $\overline V_T :=\frac{1}{T} \int_0^T V_t dt  $.

Under assumption~\textup{\ref{hypo:??}}, the solution~$V_t$ of the ODE~(\ref{eqn:ode}) with~$\lambda > 0$ is such that:
\begin{align}
  \text{For ~}T \geq 0, \quad   \|V_T - V_\lambda^* \|^2_\cH \leq \|V^*_\lambda\|^2_\cH e^{-2  \lambda T}. \label{eqn:fast_cv}
\end{align}
\end{proposition}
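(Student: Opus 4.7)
Both bounds are direct consequences of the Descent Lemma (Lemma~\ref{prop5}), applied in two different regimes: for \ref{hypo:??} with $\lambda > 0$ I will discard the non-positive $W^{-1}$ term and apply a Grönwall-type argument to get exponential contraction in $\cH$-norm; for \ref{hypo:?} with $\lambda = 0$ I will instead discard the $-2\lambda W^0$ term (which is now zero), integrate the remaining inequality in time, and convert the resulting time-integral bound into a bound on the Polyak-Ruppert average via Jensen's inequality, exploiting the isometry $\|f\|_{L^2(p)} = \|\Sigma^{1/2} f\|_\cH$ to identify $W^{-1}(t)$ with $\|V_t - V^*\|_{L^2(p)}^2$.

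\textbf{Part 2 (fast convergence, $\lambda > 0$).} Under \ref{hypo:??}, Proposition~\ref{prop3} ensures the existence of $V^*_\lambda \in \cH$, so $W^0(t) = \|V_t - V^*_\lambda\|_\cH^2$ is well defined and $W^0(0) = \|V^*_\lambda\|_\cH^2$ since $V_0 = 0$. Dropping the non-positive term $-2(1-\gamma) W^{-1}(t) \leq 0$ in \eqref{eqn:lyap} yields $\frac{d}{dt} W^0(t) \leq -2\lambda W^0(t)$. Grönwall's lemma (or directly multiplying by $e^{2\lambda t}$ and integrating) then gives $W^0(T) \leq W^0(0) e^{-2\lambda T}$, which is exactly \eqref{eqn:fast_cv}.

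\textbf{Part 1 (averaged convergence, $\lambda = 0$).} Under \ref{hypo:?}, $V^* \in \cH$ is the unique fixed point of the unregularized equation (since $\Sigma$ is invertible, so is $\gamma \Sigma_1 - \Sigma$ up to the usual argument, or one simply notes $V^* = V^*_0$). Setting $\lambda = 0$ in \eqref{eqn:lyap} and integrating from $0$ to $T$ gives
\begin{equation*}
2(1-\gamma) \int_0^T W^{-1}(t)\, dt \leq W^0(0) - W^0(T) \leq W^0(0) = \|V^*\|_\cH^2.
\end{equation*}
Next, using the isometry property \eqref{eqn:isometry} applied to $V_t - V^* \in \cH \subset \overline{\cH}$, one has $W^{-1}(t) = \|\Sigma^{1/2}(V_t - V^*)\|_\cH^2 = \|V_t - V^*\|_{L^2(p)}^2$. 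Finally, Jensen's inequality applied to the convex functional $f \mapsto \|f - V^*\|_{L^2(p)}^2$ yields
\begin{equation*}
\|\overline V_T - V^*\|_{L^2(p)}^2 = \Bigl\| \tfrac{1}{T}\int_0^T (V_t - V^*)\, dt \Bigr\|_{L^2(p)}^2 \leq \frac{1}{T} \int_0^T \|V_t - V^*\|_{L^2(p)}^2 \, dt,
\end{equation*}
and combining with the previous display produces \eqref{eqn:easy_cv}.

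\textbf{Expected difficulty.} Neither step is genuinely hard once the Descent Lemma is in hand: the main care is to justify that $V_t - V^*$ lives in $\overline{\cH}$ so that the isometry applies (immediate here since $V_0 = 0$ and the ODE keeps iterates in $\cH$ under \ref{hypo:?}), and to remember that $V^* = V^*_0$ under \ref{hypo:?} so that $W^0$ is defined with the correct target. The more conceptual point, worth highlighting, is the standard $O(1/T)$-versus-exponential dichotomy: regularization provides strong convexity (the $-2\lambda W^0$ term) giving geometric rates in $\cH$, whereas without it one only gets ergodic convergence in the weaker $L^2(p)$-norm via averaging.
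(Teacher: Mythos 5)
Your proposal is correct and follows essentially the same route as the paper: drop the $W^{-1}$ term and apply Gr\"onwall under \ref{hypo:??}, and under \ref{hypo:?} integrate the descent inequality with $\lambda=0$, identify $W^{-1}(t)$ with $\|V_t-V^*\|_{L^2(p)}^2$ via the isometry, and conclude by Jensen. The only point to state a bit more carefully is that Lemma~\ref{prop5} is formulated for $\lambda>0$ with target $V^*_\lambda$, so the $\lambda=0$ case requires re-running its proof with target $V^*$ (using that $V^*\in\cH$ satisfies the unregularized fixed-point equation, so all quantities are finite) --- which is exactly the brief adaptation the paper makes and which your remarks about $V^*=V^*_0$ and finiteness already cover.
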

 Under~\ref{hypo:?}, we recover the same~$O(1/T)$ convergence rate as~\cite{bhandari2018finite}.  We focus on~\ref{hypo:??}, where we get a fast convergence to $V^*_\lambda$ in~$\cH$-norm (stronger than  $L^2(p)$). However, we are rather interested in convergence to~$V^*$. Prop.~\ref{prop4} quantifies how far $V^*_\lambda$ is from~$V^*$. Indeed, the error decomposes as:
\begin{align}
    \| V_T - V^* \|_{L^2(p)}^2 \leq 2 M_\cH \| V_T - V^*_\lambda \|_\cH^2 + 2 \| V_\lambda^* - V^* \|_{L^2(p)}^2.
    \label{eqn:error_dcp}
\end{align}
Combining Propositions~\ref{prop3}, \ref{prop4}, \ref{prop6} shows a trade-off on~$\lambda$: $\|V_T - V^* \|^2_{L^2(p)} = O\left( e^{-2\lambda T}/\lambda^2 + \lambda^{\theta+1} \right)$. Taking  $\lambda=(3+\theta) \log T/(2T)$ balances the terms up to logarithmic factors: $\|V_T - V^* \|^2_{L^2(p)} = \tilde O\left( T^{-1-\theta} \right)$ (where $\tilde O(g(n)) := O(g(n) \log(n)^\ell)$, for some $\ell \in \Rr$). In particular, for $\theta=0$, \textit{i.e.},~$V^* \in \cH$, we recover a convergence rate $\tilde O\left( 1/T \right)$: up to logarithmic factors, it is the same as the  unregularized case with averaging, assuming \ref{hypo:?}. In this case, regularization brings no benefits.

\section{Stochastic TD with \textit{i.i.d.}~sampling} \label{sec:iid}

We now consider stochastic TD iterations~(\ref{eqn:TD}), where the couples $(x_n, x'_n)_{n \geq 1}$ are sampled \textit{i.i.d.}~from the distribution $q(dx, dx') = p(dx) \kappa(x, dx')$.  Such \textit{i.i.d.}~samples can be obtained by running the Markov chain until it has mixed so that $x_n \sim p$, collecting a couple $(x_n, x'_n)$, and restarting. With $A_n := \gamma \Phi(x_n) \otimes \Phi(x'_n) - \Phi(x_n) \otimes \Phi(x_n)$ and $b_n := r(x_n) \Phi(x_n)$, we study the recursion:
\begin{align}
    V_{n} = V_{n-1} + \rho_{n}( (A_{n} - \lambda I) V_{n-1} + b_{n}). \label{eqn:iid_TD}
\end{align}
In particular, $\Ee_q[A_n] = A$, $ \Ee_p[b_n] = b$, and $A_n$ and $b_n$ are independent of the past $(V_k)_{k < n}$. For~$\beta \in \{0, 1\}$, let $W_n^\beta := \| \Sigma^{-\beta/2}(V_n - V^*_\lambda)\|_\cH^2$. Adapting the proof of Lemma~\ref{prop5}, we exhibit a similar decreasing behavior of $W_n^0$ in expectation, hence showing that $\Ee [\|V_n - V^*_\lambda \|_\cH^2] \rightarrow 0$ for well-chosen step sizes~$\rho_n$. Finally, $\lambda$ is chosen to balance $\Ee [\|V_n - V^*_\lambda \|_{L^2(p)}^2]$ and $\|V^*_\lambda - V^* \|_{L^2(p)}^2$. 
We define
$V^{\textit{(e)}}_n$ and $V^{\textit{(t)}}_n$ as the  exponentially-weighted and the tail-averaged $n$-th iterates respectively:
\begin{align}
    V_n^{\textit{(e)}} := \frac{\sum_{k=1}^n  (1-\rho \lambda)^{n-k}  V_{k-1}}{\sum_{k=1}^n (1-\rho \lambda)^{n-k}} \text{~~~~and~~~~} V_n^{\textit{(t)}} := \frac{1}{n - \lfloor n/2 \rfloor +1}\sum_{k=\lfloor n/2 \rfloor}^{n}  V_{k-1}. \label{eqn:averages}
\end{align}
\begin{theorem} \label{thm1}Let $n \geq 9$.
Under assumption~\textup{\ref{hypo:??}} with $-1 < \theta \leq 1$, there exist a positive real number~$\underline{\lambda}_\theta$ independent of~$n$ such that, for $\lambda_0\geq\underline{\lambda}_{\theta}$,
\setlist{nolistsep} \begin{enumerate}[label=(\alph*), noitemsep]
\item
\label{hypo:thm1_a}
Using $\lambda = \lambda_0 n^{-\frac{1}{3+\theta}}$ and a  constant step size $\rho = \frac{\log n}{\lambda n}$, then: 
$$ \Ee [ \|V_n - V^* \|_{L^2(p)}^2 ] = O((\log n) n^{-\frac{1+\theta}{3+\theta}}). $$
\item
\label{hypo:thm1_b}
Using $\lambda = \lambda_0 n^{-\frac{1}{2+\theta}}$ and a constant step size $\rho = \frac{\log n}{\lambda n}$, then:
$$ \Ee [ \| V_n^{\textit{(e)}} - V^* \|_{L^2(p)}^2 ] = O((\log n) n^{-\frac{1+\theta}{2+\theta}}). $$
\item
\label{hypo:thm1_c}
Using $\lambda = \lambda_0 n^{-\frac{1}{2+\theta}}$ and a constant step size $\rho = \frac{2 \log n}{\lambda n}$ for the first $\lfloor n/2 \rfloor -1$ iterates and then a decreasing step size $\rho_k = \frac{1}{\lambda k}$, then:
$$ \Ee [ \| V_n^{\textit{(t)}} - V^* \|_{L^2(p)}^2 ] = O((\log n) n^{-\frac{1+\theta}{2+\theta}}). $$
\end{enumerate}
\end{theorem}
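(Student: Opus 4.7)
The plan is to split the error around the regularized fixed point $V_\lambda^*$ of Prop.~\ref{prop3}:
\[
\Ee \|V_n^\bullet - V^*\|_{L^2(p)}^2 \leq 2\, \Ee \|V_n^\bullet - V_\lambda^*\|_{L^2(p)}^2 + 2\, \|V_\lambda^* - V^*\|_{L^2(p)}^2,
\]
where $V_n^\bullet \in \{V_n, V_n^{(e)}, V_n^{(t)}\}$. The second term (bias) is already of order $\lambda^{1+\theta}\,\|\Sigma^{-\theta/2}V^*\|_\cH^2/(1-\gamma)^2$ thanks to Prop.~\ref{prop4}, so what remains is to bound the stochastic error and optimally choose $\lambda$. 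Setting $\Delta_n := V_n - V_\lambda^*$ and using that $(A-\lambda I)V_\lambda^* + b = 0$, the iteration~\eqref{eqn:iid_TD} becomes
\[
\Delta_n = \Delta_{n-1} + \rho_n(A_n - \lambda I)\Delta_{n-1} + \rho_n \zeta_n, \quad \zeta_n := (A_n-A)V_\lambda^* + (b_n - b),
\]
with $\zeta_n$ zero-mean and independent of $\Delta_{n-1}$.

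Squaring in $\cH$, taking conditional expectation, and invoking Lemma~\ref{prop1} on the drift $\langle \Delta, A\Delta\rangle_\cH \leq -(1-\gamma)\|\Delta\|_{L^2(p)}^2$ gives a one-step inequality of the form
\[
\Ee[\|\Delta_n\|_\cH^2 \mid \F_{n-1}] \leq (1-2\rho_n\lambda)\|\Delta_{n-1}\|_\cH^2 - 2\rho_n(1-\gamma)\|\Delta_{n-1}\|_{L^2(p)}^2 + 2\rho_n^2\bigl(C_1\|V_{n-1}\|_\cH^2 + C_2\bigr),
\]
the variance term using the uniform bounds $\|\Phi(x)\|_\cH^2 \leq M_\cH$ and $|r|\leq R$. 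For part~(a), a stability induction — valid once $\rho\lambda$ is small enough, ensured by $\lambda_0\geq\underline\lambda_\theta$ — yields $\sup_n \Ee\|V_n\|_\cH^2 = O(1/\lambda^2)$. Discarding the non-positive $L^2(p)$-term, recursing with constant $\rho=\log n/(\lambda n)$, and using $\|V_\lambda^*\|_\cH = O(1/\lambda)$ from~\eqref{eqn:lbda_norm}, one obtains $\Ee\|\Delta_n\|_\cH^2 = O(\log n/(\lambda^2 n))$. Converting via~\eqref{eqn:norms} and balancing against the bias, the choice $\lambda = \lambda_0 n^{-1/(3+\theta)}$ yields the announced rate.

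For parts~(b) and~(c) the naive conversion $\cH\to L^2(p)$ would cost a factor $M_\cH$ and hide the benefit of averaging, so we instead derive a descent inequality directly in $L^2(p)$ for $\|\Delta_n\|_{L^2(p)}^2$ by inserting $\Sigma$ into the inner product; Lemma~\ref{prop1} again provides a contraction factor $1-2\rho_n\lambda$, while the noise contribution now scales more favorably. For the exponentially weighted iterate, the weights $(1-\rho\lambda)^{n-k}$ exactly match the decay of the linear recursion, so a Jensen-type argument applied to $V_n^{(e)} - V_\lambda^*$ reduces the variance contribution to $O(1/(\lambda n))$. For the tail-averaged iterate, a Polyak--Ruppert argument~\cite{polyak1992acceleration} adapted to this operator-valued recursion, combined with the large burn-in step $2\log n/(\lambda n)$ making the initial transient negligible before the decreasing schedule $\rho_k = 1/(\lambda k)$ kicks in, yields the same $O(1/(\lambda n))$ rate. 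Balancing this variance against the bias with $\lambda = \lambda_0 n^{-1/(2+\theta)}$ gives the announced rate.

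The main obstacle is variance control: the noise bound contains $\|V_{n-1}\|_\cH^2$, so an induction must establish $\sup_n \Ee\|V_n\|_\cH^2 = O(1/\lambda^2)$; this forces $\rho\lesssim \lambda$ and is the source both of the $\log n$ factor in the step size and of the threshold $\underline\lambda_\theta$. A secondary difficulty in~(b) and~(c) is to carry out the averaging analysis entirely in $L^2(p)$, which requires exploiting the stationarity structure of Lemma~\ref{prop1} together with inserting $\Sigma$ into the inner product, rather than relying on the loose $\cH$-to-$L^2(p)$ comparison.
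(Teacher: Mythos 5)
The central claim of your stochastic analysis does not follow from the variance control you set up, and this is a genuine gap. You bound the per-step noise by $2\rho_n^2\bigl(C_1\|V_{n-1}\|_\cH^2 + C_2\bigr)$ using only the uniform bounds $\|\Phi(x)\|_\cH^2\leq M_\cH$, $|r|\leq R$, and then invoke a stability induction giving $\sup_n \Ee\|V_n\|_\cH^2 = O(1/\lambda^2)$. With that bound the per-step noise is $O(\rho^2/\lambda^2)$, and unrolling the contraction $(1-c\rho\lambda)$ yields a steady-state error of order $\rho/\lambda^3$, i.e.\ $\Ee\|\Delta_n\|_\cH^2 = O(\log n/(\lambda^4 n))$, not the $O(\log n/(\lambda^2 n))$ you assert; balancing against the bias $\lambda^{1+\theta}$ then gives a rate of order $n^{-(1+\theta)/(5+\theta)}$ for (a) (and correspondingly degraded rates in (b)--(c)), not the announced ones. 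The paper's proof avoids exactly this loss through two finer estimates that your sketch omits: (i) Lemma~\ref{lemma2} bounds $\Ee\|A_n V\|_\cH^2 \leq 2M_\cH(1+\gamma^2)\|\Sigma^{1/2}V\|_\cH^2$, i.e.\ in the $L^2(p)$-norm rather than the $\cH$-norm, so the iterate-dependent part of the variance, applied to $V_{n-1}-V^*_\lambda$, is absorbed into the negative $L^2(p)$ drift term under the condition $\rho\leq\bar\rho$ with $\bar\rho$ a \emph{constant}, and no stability induction on $\Ee\|V_n\|_\cH^2$ is needed; and (ii) the noise at the regularized optimum, $\zeta_n=(A_n-A)V^*_\lambda+(b_n-b)$, has variance $\sigma^2=O(1)$ \emph{independently of $\lambda$}, because $\lambda^2\|V^*_\lambda\|_\cH^2\leq M_\cH\|r\|_{L^2(p)}^2$ by Prop.~\ref{prop3} (the $\lambda$'s cancel) and $\Ee\|A_nV^*_\lambda+b_n\|_\cH^2$ is controlled through $\|V^*_\lambda-V^*\|_{L^2(p)}\leq\|V^*\|_{L^2(p)}/(1-\gamma)$, i.e.\ Prop.~\ref{prop4} at $\theta=-1$, rather than through $\|V^*_\lambda\|_\cH=O(1/\lambda)$. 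Without (i) and (ii), the intermediate bounds you claim, and hence the stated rates and the choices of $\lambda$, are not justified; your remark that the step-size restriction $\rho\lesssim\lambda$ is the source of $\underline\lambda_\theta$ also reflects this, since in the paper the only restriction is $\rho\leq\bar\rho$ constant together with $\rho\lambda\leq 1/2$.

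A secondary problem concerns your plan for (b) and (c) to run the recursion ``directly in $L^2(p)$ by inserting $\Sigma$ into the inner product.'' The non-expansiveness of Lemma~\ref{prop2} (equivalently Lemma~\ref{prop1}) is a statement about the $\Sigma$-weighted norm; after inserting $\Sigma$ you would need control of terms like $\langle\Sigma\Delta,\gamma\Sigma_1\Delta\rangle_\cH$ against $\|\Sigma\Delta\|_\cH^2$, which is a $\Sigma^2$-weighted contraction that does not follow from the stationarity structure. The paper instead keeps the descent inequality in the $\cH$-norm (Lemma~\ref{lemma3}), where the negative drift happens to be the $L^2(p)$-distance $W^{-1}_{k-1}$, then telescopes or sums with weights $(1-\rho\lambda)^{n-k}$ and applies Jensen's inequality to the exponentially weighted (resp.\ tail) average to obtain the $L^2(p)$ bound on $V_n^{(e)}$ (resp.\ $V_n^{(t)}$); this is the mechanism by which averaging removes the extra factor of $1/\lambda$, and it is what your argument would need to reproduce.
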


A similar exponentially-weighted averaging scheme as in \ref{hypo:thm1_b} has been used to study constant step size SGD in~\cite{defossez2017adabatch}. When~$\gamma=0$, the rates can be compared to existing results for SGD. For example, for $\theta \in [0,1]$, ~\cite{tarres2014online} proves almost sure convergence for regularized least-mean-squares without averaging at rate $O(n^{-\frac{1+\theta}{2+\theta}})$. The dependence  in~$\theta$ is similar to what we obtain. Moreover, under assumption~\ref{hypo:?}, we recover the same $O(1/\sqrt{n})$ convergence rate as~\cite{bhandari2018finite} (see  Prop.~\ref{easythm} stated in App.~\ref{sec:proofs_iid}). Finally, our bounds have a polynomial dependence in the horizon~$1/(1-\gamma)$ of the~MRP.

\section{Stochastic TD with Markovian sampling} \label{sec:markov}

We now consider the truly \textit{online} TD algorithm, where the samples are produced by a Markov chain. In particular, there is now a correlation between the current samples $(x_n, x_n')$ and the previous iterate~$V_{n-1}$. To control it, we assume that the Markov chain mixes at uniform geometric rate:
\phantomsection
\label{hypo:???} 
\begin{align}
\textbf{(A3)} \qquad\qquad \exists m>0,~ \mu \in(0, 1) \text{~ s.t. }\sup_{x \in \cX} d_{TV} \left(\mathbb{P}(x_n \in \cdot | x_0 = x), p \right) \leq m \mu^n, \qquad\qquad ~ \label{eqn:mixing}
\end{align}
where $d_{TV}$ denotes the total variation distance. This is always verified for irreducible, aperiodic finite Markov chains~\cite{levin2017markov}. We give an example of continuous-state Markov chain with geometric mixing in Sec.~\ref{sec:numerics}. Furthermore, following~\cite{bhandari2018finite}, in our analysis we need to control the magnitude of the iterates almost surely. To do so, we add a projection step at each TD iteration:
\begin{align}
    V_{n} = \Pi_B [ V_{n-1} + \rho_n ((A_n - \lambda I) V_{n-1} + b_n) ], \label{eqn:TD_proj}
\end{align}
where $\Pi_B$ is the projection on the $\cH$ ball of radius $B>0$. If $\|V^*_\lambda\|_\cH \leq B$, the convergence of the method is preserved. In the following theorem, we consider two regimes with different rates of convergence. In the first one, we assume like~\cite{bhandari2018finite} that we are given an oracle~$B$ upper-bounding $\|V^*_\lambda\|_\cH$, with $B$ independent of~$\lambda$. In the second one, we use the bound of Prop.~\ref{prop3}, but this will affect the convergence rate since in this case~$B = O(1/\lambda)$.

\begin{theorem} \label{thm2} Assuming~\textup{\ref{hypo:??}} and that the samples are produced by a Markov chain with uniform geometric mixing~\hyperref[hypo:???]{\textbf{\textup{(A3)}}}, the projected TD iterations~(\ref{eqn:TD_proj}) are such that:
\setlist{nolistsep} \begin{enumerate}[label=(\roman*), noitemsep]
\item 
\label{hypo:thm2_i}
Using $\lambda = n^{-\frac{1}{2+\theta}}$, a constant step size $\rho = \frac{\log n}{2\lambda n}$, and using a projection radius~$B$ independent of $\lambda$ provided by an oracle and such that $\|V^*_\lambda\|_\cH \leq B$, then:
\begin{align}
    \Ee \left[ \| V^{\textit{(e)}}_n - V^* \|_{L^2(p)}^2 \right] \leq O \Big( \frac{(\log n)^2 n^{-\frac{1+\theta}{2+\theta}} }{\log(1/\mu)} \Big).
\end{align}
\item
\label{hypo:thm2_ii}
Using $\lambda = n^{-\frac{1}{4+\theta}}$, $\rho = \frac{\log n}{2\lambda n}$, 
and the projection radius~$B=\sqrt{M_\cH} \|r\|_{L^2(p)}/\lambda$, then:
\begin{align}
    \Ee \left[ \| V^{\textit{(e)}}_n - V^* \|_{L^2(p)}^2 \right] \leq O \Big( \frac{(\log n)^2 n^{-\frac{1+\theta}{4+\theta}} }{\log(1/\mu)} \Big),
\end{align}
with $V^{\textit{(e)}}_n = \sum_{k=1}^n  (1-2\rho \lambda)^{n-k}  V_{k-1} /\sum_{j=1}^n (1-2\rho \lambda)^{n-j}.$
\end{enumerate}
\end{theorem}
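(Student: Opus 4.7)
The plan is to adapt the analysis underlying Theorem~\ref{thm1}\ref{hypo:thm1_b} to the Markovian setting by combining a descent inequality on $W_n^0 = \|V_n - V_\lambda^*\|_\cH^2$ with a mixing-based decoupling of $V_{n-1}$ from the current sample $(x_n,x_n')$. The projection step in \eqref{eqn:TD_proj} is used to guarantee $\|V_n\|_\cH \leq B$ almost surely, which both preserves $V_\lambda^*$ as a fixed point (since $\|V_\lambda^*\|_\cH \leq B$) and, crucially, gives an almost-sure bound on per-step increments $\|V_n - V_{n-1}\|_\cH \leq \rho\, C\, B$ for a constant $C$ depending on $\gamma$, $M_\cH$, $\lambda$ and $\|r\|_{L^2(p)}$.

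First, I would mimic the i.i.d.\ descent: expanding $W_n^0$ after one update and taking conditional expectation produces a ``drift'' term $\langle V_{n-1}-V_\lambda^*,\; \Ee_n[(A_n-\lambda I)(V_{n-1}-V_\lambda^*) + b_n - (A-\lambda I)V_\lambda^*]\rangle_\cH$, which in the i.i.d.\ case vanishes because $(A_n,b_n)$ is independent of $V_{n-1}$. Here I would replace $V_{n-1}$ by the earlier iterate $V_{n-\tau-1}$ for a lag $\tau$ to be chosen, writing
\begin{align*}
  \langle V_{n-1}-V_\lambda^*, \Xi_n\rangle_\cH
  = \langle V_{n-\tau-1}-V_\lambda^*, \Xi_n\rangle_\cH + \langle V_{n-1}-V_{n-\tau-1}, \Xi_n\rangle_\cH,
\end{align*}
where $\Xi_n := (A_n-\lambda I)(V_{n-1}-V_\lambda^*) + b_n - (A-\lambda I)V_\lambda^*$. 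The second inner product is controlled by the step-size telescoping $\|V_{n-1}-V_{n-\tau-1}\|_\cH = O(\tau\rho B)$, while the first is bounded by coupling $(x_n,x_n')$ with an independent draw from $q$: the mixing assumption \textbf{\textup{(A3)}} together with the almost-sure boundedness gives a contribution of order $m\mu^\tau B^2$.

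Choosing $\tau = \lceil \log(n)/\log(1/\mu)\rceil$ makes $m\mu^\tau = O(1/n)$ and yields a noisy descent of the form
\begin{align*}
  \Ee[W_n^0] \leq (1-2\rho\lambda)\Ee[W_{n-1}^0] + O\bigl(\rho^2 \tau B^2\bigr) + O(\rho/n),
\end{align*}
where the variance-type term absorbs $\|\Xi_n\|_\cH^2 = O(B^2)$. Unrolling this inequality and applying the exponentially-weighted average with weights $(1-2\rho\lambda)^{n-k}$ reduces the bound on $\Ee\|V_n^{(e)}-V_\lambda^*\|_\cH^2$ to $O\bigl(\rho \tau B^2/\lambda + \tfrac{1}{n\lambda\rho}\bigr)$, and by the isometry \eqref{eqn:isometry} translates to $L^2(p)$ via the factor $M_\cH$. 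Finally I would add the bias term $\|V_\lambda^*-V^*\|_{L^2(p)}^2 \leq \lambda^{1+\theta}\|\Sigma^{-\theta/2}V^*\|_\cH^2/(1-\gamma)^2$ from Prop.~\ref{prop4}, and plug in $\rho = \log(n)/(2\lambda n)$ so that $\tau\rho = O(\log(n)^2/(\lambda n\log(1/\mu)))$. Balancing variance $\lambda^{-1}\rho\tau B^2$ against bias $\lambda^{1+\theta}$ gives, in case~\ref{hypo:thm2_i} where $B$ is a constant, $\lambda = n^{-1/(2+\theta)}$ and rate $\widetilde O(n^{-(1+\theta)/(2+\theta)})$; in case~\ref{hypo:thm2_ii} where $B = \sqrt{M_\cH}\|r\|_{L^2(p)}/\lambda$, the variance picks up an extra $\lambda^{-2}$ factor, pushing the balance to $\lambda = n^{-1/(4+\theta)}$ and rate $\widetilde O(n^{-(1+\theta)/(4+\theta)})$.

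The main obstacle is the mixing-based decoupling: carrying the bound $\|V_{n-1}-V_{n-\tau-1}\|_\cH = O(\tau\rho B)$ through the inner product $\langle V_{n-1}-V_{n-\tau-1}, \Xi_n\rangle_\cH$ requires that $\|\Xi_n\|_\cH$ itself be bounded almost surely, which is exactly where the projection onto the ball of radius $B$ is needed and where the dependence of $B$ on $\lambda$ in case~\ref{hypo:thm2_ii} enters the rate. A secondary delicate point is the validity of the descent on $W_n^0$ after projection: since $V_\lambda^* \in \cH$ with $\|V_\lambda^*\|_\cH\leq B$, the projection $\Pi_B$ is non-expansive around $V_\lambda^*$, so $\|V_n - V_\lambda^*\|_\cH$ does not increase under the projection step, preserving the descent.
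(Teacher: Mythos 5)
Your overall architecture (projection to get almost-sure boundedness, a lag $\tau$ combined with a coupling step and an increment bound $\|V_{n-1}-V_{n-1-\tau}\|_\cH=O(\tau\rho B)$, the choice $\mu^\tau\approx\rho$, and adding the bias of Prop.~\ref{prop4}) matches the paper's proof via Lemma~\ref{lemma_proj} and Lemma~\ref{lemma_cpl}, but two steps do not go through as written. First, the decoupling: you lag only the left slot, writing $\langle V_{n-1}-V^*_\lambda,\Xi_n\rangle_\cH=\langle V_{n-\tau-1}-V^*_\lambda,\Xi_n\rangle_\cH+\langle V_{n-1}-V_{n-\tau-1},\Xi_n\rangle_\cH$, and then invoke mixing on the first term. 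But $\Xi_n$ still contains $V_{n-1}$, so that term is not a function of $(z_1,\dots,z_{n-1-\tau})$ and $z_n$ alone, and Lemma~\ref{lemma_cpl} cannot be applied to it; moreover its decoupled expectation is not zero, since it still carries the population drift --- which is precisely what produces the negative terms $-(1-\gamma)W^{-1}_{n-1}-\lambda W^0_{n-1}$ (your remark that the drift ``vanishes in the i.i.d.\ case'' is incorrect). The paper avoids this by first subtracting the population drift (handled by Lemma~\ref{prop5}) and isolating $\zeta(V_{n-1},z_n)=\langle V_{n-1}-V^*_\lambda,(A_n-A)V_{n-1}+(b_n-b)\rangle_\cH$, which depends only on $(V_{n-1},z_n)$, is bounded, has zero decoupled mean, and is Lipschitz in its first argument, so that $V_{n-1}$ can be replaced by $V_{n-1-\tau}$ in \emph{both} slots before coupling.

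Second, and more consequentially, your final bookkeeping loses a factor of $\lambda$ and does not yield the stated rates. You drop the $-(1-\gamma)\Ee W^{-1}_{n-1}$ term, unroll the contraction $\Ee W^0_n\leq(1-2\rho\lambda)\Ee W^0_{n-1}+O(\rho^2\tau B^2)$, and convert to $L^2(p)$ through $M_\cH\,\Ee\|\cdot\|^2_\cH$; the stationary level of $W^0$ is then $O(\rho\tau B^2/\lambda)$, exactly as you state. With $\rho=\frac{\log n}{2\lambda n}$ and $\tau\asymp\frac{\log n}{\log(1/\mu)}$ this is of order $\frac{(\log n)^2B^2}{\lambda^2 n\log(1/\mu)}$; balanced against the bias $\lambda^{1+\theta}$ it gives $\lambda\asymp n^{-1/(3+\theta)}$ and rate $n^{-(1+\theta)/(3+\theta)}$ in case~\ref{hypo:thm2_i} (and $n^{-(1+\theta)/(5+\theta)}$ in case~\ref{hypo:thm2_ii} where $B=O(1/\lambda)$), not the claimed rates; and with the theorem's prescribed $\lambda=n^{-1/(2+\theta)}$ your bound is only $O(n^{-\theta/(2+\theta)})$, which is vacuous for $\theta\leq 0$. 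Your sentence asserting that balancing $\lambda^{-1}\rho\tau B^2$ against $\lambda^{1+\theta}$ yields $\lambda=n^{-1/(2+\theta)}$ is arithmetically inconsistent with your own variance term. The fix is the paper's: keep $-2\rho(1-\gamma)\Ee W^{-1}_{k-1}$ in the descent inequality, multiply by the weights $(1-2\rho\lambda)^{n-k}$ and telescope as in Theorem~\ref{thm1}\ref{hypo:thm1_b}, which bounds the exponentially weighted average of the $L^2(p)$ errors $\Ee W^{-1}_{k-1}$ directly, with variance floor $O(\rho\,\tilde\sigma^2_{\lambda,\rho}/(1-\gamma))$ and no extra $1/\lambda$ or $M_\cH$ conversion; this is what produces the exponents $1/(2+\theta)$ and $1/(4+\theta)$.
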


When an oracle is given for~$B$ (i.e., setting \ref{hypo:thm2_i}), we recover the same rate as \textit{i.i.d.}~sampling, up to a multiplicative factor $\log(n)/\log(1/\mu)$ which represents the mixing time of the Markov chain. If no oracle is provided (i.e., setting \ref{hypo:thm2_ii}), the convergence will be slower because the bound~$B$ is of order~$1/\lambda$. Note that the slight changes in the definitions of~$\rho$, $\lambda$, $V^{(e)}$, and the absence of constraint on~$\lambda$, as compared to Thm.~\ref{thm1}, are implied by the boundedness of the iterates. Following a similar study for  SGD~\cite{nagaraj2020least}, we might compare these rates to those of a naive algorithm which we call ``$\tau$-Skip-TD'', for some $\tau\geq1$, where only one every~$\tau$ samples from the Markov chain is used to make TD updates:
\begin{align}
    V_{n} = \Pi_B [ V_{n-1} + \rho_n ((A_{n\tau} - \lambda I) V_{n-1} + b_{n\tau}) ], \label{eqn:skip_TD}
\end{align}
For $\tau$ large enough, of the order of the mixing time of the Markov chain, the new sample $(x_{n\tau}, x'_{n\tau})$ is almost independent from the past ones $(x_{k\tau}, x'_{k\tau})_{k<n}$. Of course, since we need to generate~$\tau$ times more samples to make a step, we must look at the distance of $V_{n/\tau}$ to the optimum. Such convergence rates for $\tau$-Skip-TD are derived in App.~\ref{sec:proofs_markov}, Cor.~\ref{cor1}. In setting \textit{(i)}, they are similar to Theorem~\ref{thm2} up to a $\log(n)$ factor. This suggests that making updates at each sample of the Markov chain is not more efficient than $\tau$-Skip-TD for large $\tau$, at least in our theoretical analysis. In practice,  using all samples seems slightly better, especially for a slowly mixing Markov chain (see App.\ref{subsec:exps}).  In setting \textit{(ii)}, we obtain a rate for Skip-TD whose leading term does not depend on $\log(1/\mu)$ -- which only appears in higher order terms -- suggesting that the rate and  parameters of Thm.~\ref{thm2}, setting~\textit{(ii)} might be suboptimal.

\section{Experiment on artificial data} \label{sec:numerics}

\textbf{Building a value function.} We build a toy model for which the main parameters can be computed in closed form. We consider the dynamics on the circle $\cX = [0, 1]$ defined by: with probability~$\e$, $x_{n+1}\sim \cU([0, 1])$, and with probability~$1-\e$, $x_{n+1} = x_n$. Because the Markov kernel is symmetric, the invariant distribution is $p=\cU([0, 1])$. In particular, the mixing parameter can be bounded explicitly with $m = 1$ and $\mu = 1-\e$ (see App.~\ref{subsec:mixing}). Also, simple computations show that $V^*$ is an affine transform of $r$: $V^*(x) = a r(x) + b$, with $a=(1-\gamma(1-\e))^{-1}$ and $b= -a \int_0^1 r(u) du$. Hence we can build a~$V^*$ with a given regularity by choosing an appropriate reward with the same regularity. We consider two rewards: $r_\text{abs}(x) := 2 | x-1/2 |$ and  $r_\text{cos}(x) := (1+\cos(2\pi x))/2$.

\textbf{Kernels on the torus.} 
We consider the RKHS of splines on the circle~\cite{wahba1990spline} of regularity~$s \in \Nn^*$, denoted by~$H_\text{per}^s$. It is a Sobolev space equipped with the following norm:
\begin{align}
    \|f\|^2_{H^s_\text{per}} = \left( \int_0^1 f(x) dx \right)^2 + \frac{1}{(2\pi)^{2s}} \int_0^1 |f^{(s)}(x)|^2 dx. \label{eqn:sob_norm}
\end{align}
Its corresponding reproducing kernel $K_s$ is a translation-invariant kernel defined by:
\begin{align}
    K_s(x, y) = 1 + (-1)^{s-1} \frac{(2 \pi)^{2s}}{(2s)!}B_{2s}(\{x-y \}), \label{eqn:kspline}
\end{align}
where $\{x\} := x - \lfloor x \rfloor$ and $B_j$ is the $j$-th  Bernoulli polynomial~\cite{olver2010nist}.
Let us recall that the Fourier series expansion on the torus of a 1-periodic function $f\in L^2(p)$ is: $f(x) = \sum_{\omega \in \Zz} e^{2i \omega \pi x} \hat f_\omega$, with $\hat f_\omega := \int_0^1 f(x) e^{-2i \omega \pi x} dx$, for $\omega \in \Zz$. The kernel $K_s$ has an embedding in the space of Fourier coefficients $\Phi(x) = (\sqrt{c_\omega}e^{2i \omega \pi x})^\top_{m \in \Zz}$ with $c_\omega := |\omega|^{-2s}$ if $\omega \neq 0$ and $c_0 := 1$. Using Parseval's theorem in Eqn.~(\ref{eqn:sob_norm}), one can compute the norm of $f$ from its Fourier coefficients $\| f\|^2_{H^s_\text{per}} = \sum_{\omega \in \Zz} |\hat f_\omega|^2/c_\omega$. The operators $\Sigma$ and $\Sigma_1$ can be represented as countably infinite-dimensional matrices $\Sigma = \text{diag}(c)$ and $\Sigma_1 = (1-\e) \Sigma + \e \sqrt{c} (\sqrt{c})^\top$. Hence the source condition states that $|\hat f_0 |^2 + \sum_{\omega \neq 0} |\omega|^{2s(1+\theta)} | \hat f_\omega |^2 < \infty$. In particular, it holds if $f \in H_\text{per}^{s'}$, for any $s' \geq s(1+\theta)$. In our example, we consider two Sobolev spaces $H_\text{per}^1$ and $H_\text{per}^2$, and our two example functions have Fourier coefficients $ ({\hat r_{\text{abs}}})_\omega  = \frac{1-(-1)^\omega}{\pi^2 \omega^2}$ for $\omega \neq 0$, and  $({\hat r_{\text{cos}}})_\omega = 0$ for $|\omega |>1$. The largest~$\theta \in [0, 1]$ such that the source condition holds are indicated in the first row of Tab.~\ref{table_rates}.

\textbf{Results.} We run TD on  functions $r_\text{abs}$ and $r_\text{cos}$, with kernels $K_1$ and $K_2$. We use parameters~$\lambda$ and~$\rho$ and exponential averaging as prescribed in Thm.~\ref{thm1}~\textit{(b)}. Each experiment is repeated~10 times and we record the mean $\pm$ one standard deviation.  The implementation is based on a finite dimensional representation of the iterates $(V_k)_{k \leq n}$ in $\Rr^n$ (see further details in App.~\ref{subsec:implem}). This implies computing the kernel matrix in $O(n^2)$ operations. To accelerate this computation when the eigenvalues decrease fast, we approximate it with the incomplete Cholesky decomposition~\cite{bach2002kernel}. In Tab.~\ref{table_rates}, we set $\e=0.8$, $\gamma=0.5$ and report the observed convergence rates \textit{v.s.}~the ones expected by Thm.~\ref{thm2}, which are fairly consistent. In Fig.~\ref{fig1}, we show the respective effects of varying~$\e$ and~$\gamma$. Larger values of~$\e$ or~$\gamma$ make the problem more difficult and slow down convergence, presumably in the constants without affecting the rates, as predicted by Thm.~\ref{thm2}. Additional experiments are provided in App.~\ref{subsec:exps}.

\begin{table}[ht]
  \centering
  \caption{Predicted and observed convergence rates with different reward functions and kernels. \label{table_rates}}
\begin{tabular}{llllll}
\toprule
&\multicolumn{2}{c}{Sobolev kernel $s=1$}& ~ &\multicolumn{2}{c}{Sobolev kernel $s=2$}\\
\cmidrule(l){2-6}
& \hfil$r=r_\text{abs}$ & \hfil$r=r_\text{cos}$ &~& \hfil$r=r_\text{abs}$ & \hfil$r=r_\text{cos}$\\
\midrule
\hfil Maximal $\theta$ & \hfil$1/2$ & \hfil$1$ &~&\hfil$-1/4$& \hfil$1$\\
\hfil Predicted rate & \hfil $- 0.6$ &\hfil $- 0.67$&~& \hfil $- 0.43$  & \hfil $-0.67$\\
\hfil Observed rate & \hfil $-0.72$ &\hfil $-0.64$ &~& \hfil $-0.58$ & \hfil $-0.64$\\
\bottomrule
\end{tabular}
\end{table}

\begin{figure}[ht]
    \centering
    \includegraphics[width=0.47\linewidth,trim={0 0.2cm 0.2cm 0},clip]{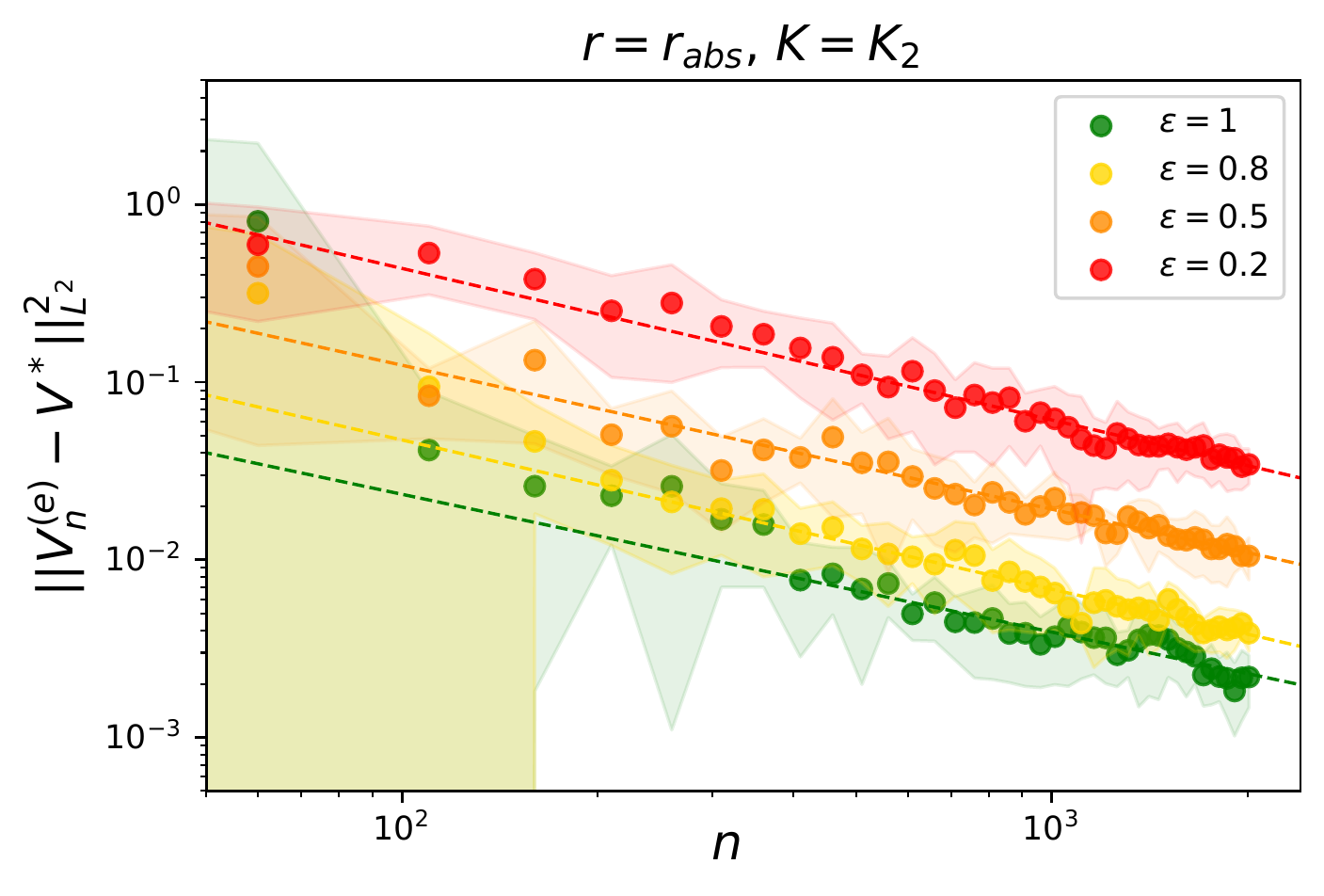} \hspace{0.03\linewidth}
    \includegraphics[width=0.47\linewidth,trim={0 0.2cm 0.2cm 0},clip]{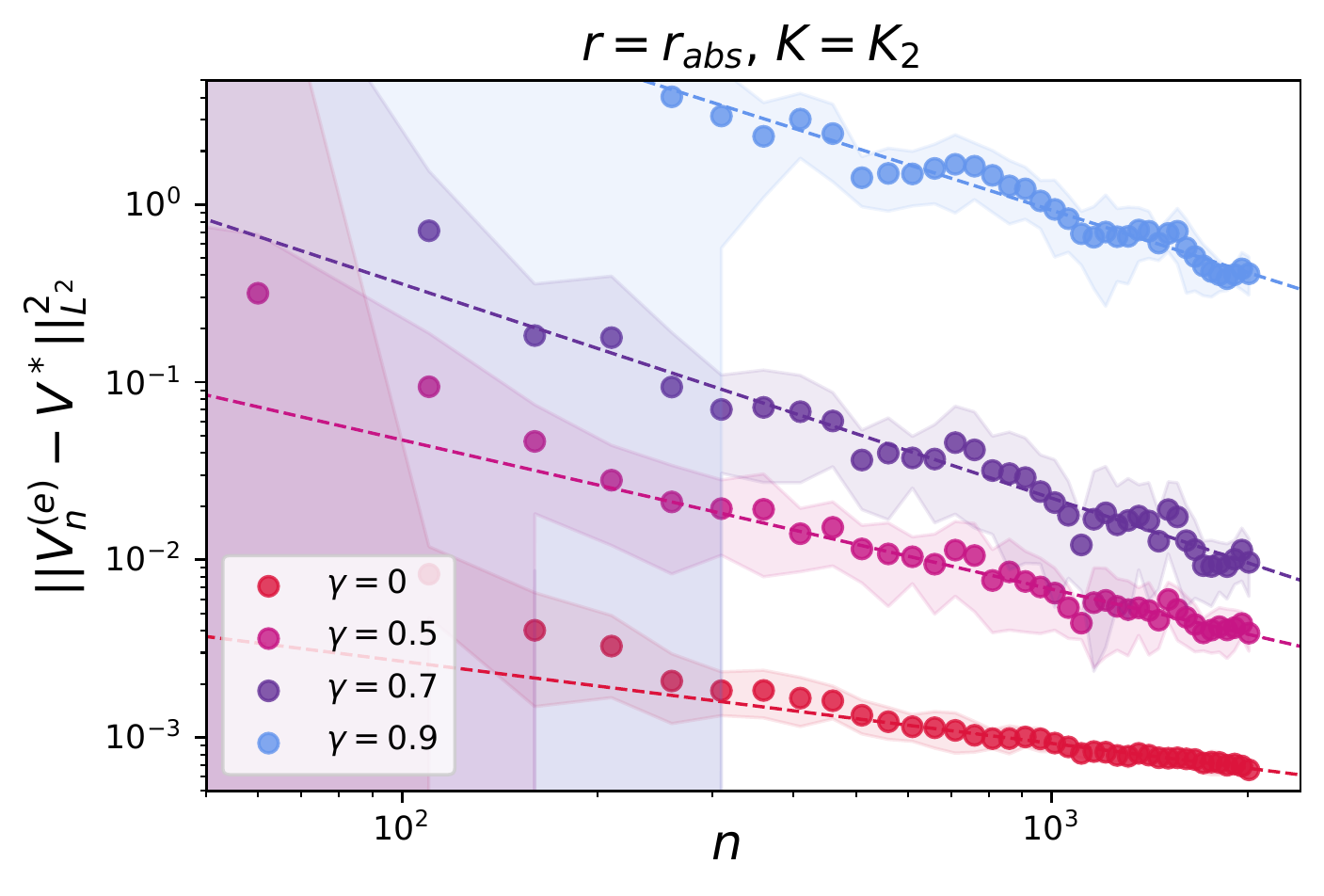}
    \caption{Respective effects of varying $\e$ (for $\gamma=0.5$ fixed) and $\gamma$ (for $\e=0.8$ fixed).}
    \label{fig1}
\end{figure}

\section{Conclusion} \label{sec:conclusion}
We have provided convergence rates for  the regularized non-parametric TD algorithm in the \textit{i.i.d.}~and Markovian sampling settings. The rates depend on a source condition that quantifies the relative regularity of the optimal value function to the RKHS. They are compatible with our empirical observations on a one-dimensional MRP, but we have not proved optimality of such rates. Interesting directions include the extension to the TD($\lambda$) algorithm, which we believe can be achieved with similar tools, as well as more challenging extensions to control counterparts of TD (Q-learning, SARSA,...) for which the policy is optimized.

\section*{Acknowledgements}

This work was supported by the Direction Générale de l’Armement, and by the French government under management of Agence Nationale de la Recherche as part of the “Investissements d’avenir” program, reference ANR-19-P3IA-0001 (PRAIRIE 3IA Institute). We also acknowledge support from the European Research Council (grant SEQUOIA 724063).

\bibliographystyle{abbrv}
{\bibliography{biblio}}


\appendix


\section{Proofs and intermediate results} \label{sec:proofs}

\subsection{Problem formulation and generic results} \label{sec:proofs_gen}

\begin{proof}[Proof of Lemma~\ref{prop2}]
Let $V \in L^2(p)$. Then:
\begin{align*}
    \| PV \|_{L^2(p)}^2 &= \int_\cX  (\Ee_{x' \sim \kappa(x, \cdot)} V(x') )^2 p(dx) \\
    &\leq \int_\cX  \Ee_{x' \sim \kappa(x, \cdot)} [V(x')^2]  p(dx) \\
    &= \int_{\cX} \left( \int_\cX V(x')^2  \kappa(x, dx') \right)  p(dx)\\
    &= \int_\cX V(x')^2 \left(\int_\cX \kappa(x, dx') p(dx) \right)  \\
    &= \int_\cX V(x')^2 p(dx')  \\
    & = \| V\|_{L^2(p)}^2.
\end{align*}
The second line is an application of Jensen's inequality, with equality if $\forall x, V(x')|x$ is  constant almost surely (a.s.). The fourth line is an application of Fubini-Tonelli's theorem. The fifth line results from the stationarity of~$p$ with respect to~$\kappa$, and~$\kappa(\cdot, dx')$ being $\cA$-measurable.
\end{proof}

\subsection{Analysis of a continuous-time version of the population TD algorithm} \label{sec:proofs_pop}

Proposition~\ref{prop3} is a consequence of the following Lemma~\ref{lemma1}:
\begin{lemma} \label{lemma1}
For $\lambda >0$, the operator $\Sigma + \lambda I - \gamma \Sigma_1 : \cH \rightarrow \cH$ is bijective, and the operator norm of its inverse is bounded as follows:
$$\| (\Sigma + \lambda I - \gamma \Sigma_1 )^{-1} \|_{\rm op} \leq  \frac{1}{\lambda}. $$
\end{lemma}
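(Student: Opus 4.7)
The plan is to first establish coercivity of the operator $T := \Sigma + \lambda I - \gamma \Sigma_1$ on $\cH$ with constant $\lambda$, which immediately delivers injectivity together with the operator-norm bound on the inverse, and then to obtain surjectivity either through a Lax--Milgram-type argument or by applying the same coercivity to the adjoint. The coercivity itself is the essential ingredient, and it follows directly from Lemma~\ref{prop1}; the only real bookkeeping issue is making surjectivity precise.

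For the coercivity step, I would use Lemma~\ref{prop1} to write $\Sigma_1 = \Sigma^{1/2} \tilde{\Sigma}_1 \Sigma^{1/2}$ with $\|\tilde{\Sigma}_1\|_{\rm op} \leq 1$. For any $f \in \cH$, Cauchy--Schwarz applied to the cross term yields
\[
\langle f, T f \rangle_\cH \;=\; \|\Sigma^{1/2} f\|_\cH^2 + \lambda \|f\|_\cH^2 - \gamma \langle \Sigma^{1/2} f, \tilde{\Sigma}_1 \Sigma^{1/2} f \rangle_\cH \;\geq\; (1-\gamma)\,\|\Sigma^{1/2} f\|_\cH^2 + \lambda \|f\|_\cH^2 \;\geq\; \lambda \|f\|_\cH^2,
\]
where the first inequality uses $\|\tilde{\Sigma}_1\|_{\rm op} \leq 1$ and $\gamma \in [0,1)$. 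A second application of Cauchy--Schwarz then gives $\|T f\|_\cH \, \|f\|_\cH \geq \langle f, T f \rangle_\cH \geq \lambda \|f\|_\cH^2$, so that $\|T f\|_\cH \geq \lambda \|f\|_\cH$. Consequently, $T$ is injective, its range is closed, and whenever $T$ is bijective one immediately obtains $\|T^{-1}\|_{\rm op} \leq 1/\lambda$, which is the sought bound.

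For surjectivity, I would apply the same coercivity argument to the adjoint $T^{\ast} = \Sigma + \lambda I - \gamma \Sigma_1^{\ast}$. Since $\Sigma^{1/2}$ is self-adjoint, we have $\Sigma_1^{\ast} = \Sigma^{1/2} \tilde{\Sigma}_1^{\ast} \Sigma^{1/2}$ with $\|\tilde{\Sigma}_1^{\ast}\|_{\rm op} = \|\tilde{\Sigma}_1\|_{\rm op} \leq 1$, so the exact same computation shows $\langle f, T^{\ast} f\rangle_\cH \geq \lambda \|f\|_\cH^2$. In particular $T^{\ast}$ is injective, which by a standard closed-range argument (or Hahn--Banach applied to any element orthogonal to $T(\cH)$) forces $T(\cH)$ to be dense in $\cH$. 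Combined with the fact that $T(\cH)$ is closed thanks to the lower bound $\|Tf\|_\cH \geq \lambda \|f\|_\cH$, we obtain $T(\cH) = \cH$, i.e.\ surjectivity. Alternatively, Lax--Milgram applied to the bounded coercive form $B(f,g) := \langle f, T g\rangle_\cH$ delivers bijectivity in one stroke. The main obstacle is thus just formally closing the surjectivity step; once that is in place, the norm bound is a cheap consequence of the coercivity computation.
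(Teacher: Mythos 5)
Your proof is correct, but it follows a genuinely different route from the paper's. You derive a single coercivity estimate $\langle f, (\Sigma+\lambda I-\gamma\Sigma_1) f\rangle_\cH \geq \lambda\|f\|_\cH^2$ from Lemma~\ref{prop1} and Cauchy--Schwarz, and then extract everything from it: injectivity and the lower bound $\|Tf\|_\cH\geq\lambda\|f\|_\cH$ (hence closed range and the bound $\|T^{-1}\|_{\rm op}\leq 1/\lambda$ once bijectivity is known), and surjectivity from injectivity of the adjoint (or Lax--Milgram); note that in this real Hilbert space the separate adjoint computation is even redundant, since $\langle f,T^*f\rangle_\cH=\langle Tf,f\rangle_\cH$ gives the same coercivity at once. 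The paper instead splits the statement into two independent arguments: bijectivity comes from the factorization $\Sigma+\lambda I-\gamma\Sigma_1=(\Sigma+\lambda I)^{1/2}\bigl[I-\gamma(\Sigma+\lambda I)^{-1/2}\Sigma^{1/2}\tilde\Sigma_1\Sigma^{1/2}(\Sigma+\lambda I)^{-1/2}\bigr](\Sigma+\lambda I)^{1/2}$ together with a Neumann-series inversion of the bracketed operator (whose perturbation has norm at most $\gamma<1$), while the norm bound is obtained by expanding $\|(\lambda I+\Sigma-\gamma\Sigma_1)f\|_\cH^2$, discarding the nonnegative quadratic term, and using the stationarity-based inequality $\langle f,\Sigma_1 f\rangle_\cH\leq\langle f,\Sigma f\rangle_\cH$ (proved directly from $\Ee_q[f(x)f(x')]\leq\tfrac12\Ee_p[f(x)^2]+\tfrac12\Ee_p[f(x')^2]$, without invoking Lemma~\ref{prop1}). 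Your version is shorter and more standard, relying on the classical coercivity/Lax--Milgram machinery, whereas the paper's factorization gives an explicit series representation of the inverse of the same flavor as the one reused in the proof of Proposition~\ref{prop4}, and its norm-bound argument only needs stationarity of $p$ rather than the Baker-type decomposition. Both establish the stated conclusion in full.
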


\begin{proof}[Proof of Lemma~\ref{lemma1}] 
From Lemma~\ref{prop1}, there exists $\tilde \Sigma_1$ with $\| \tilde \Sigma_1 \|_{\rm op} \leq 1$ such that $\Sigma_1 = \Sigma^{1/2} \tilde \Sigma_1 \Sigma^{1/2}$.

For $\lambda >0$, $\Sigma + \lambda I \succ 0$, hence we have the decomposition:
\begin{align}
    \Sigma + \lambda I -\gamma \Sigma_1  = (\Sigma + \lambda I)^{1/2} \left[ I - \gamma (\Sigma + \lambda I)^{-1/2} \Sigma^{1/2} \tilde \Sigma_1 \Sigma^{1/2} (\Sigma + \lambda I)^{-1/2}  \right] (\Sigma + \lambda I )^{1/2}. \label{eqn:factor}
\end{align}
Since the operator norm is an induced norm:
\begin{align*}
     &\| (\Sigma + \lambda I)^{-1/2} \Sigma^{1/2} \tilde \Sigma_1 \Sigma^{1/2} (\Sigma + \lambda I)^{-1/2} \|_{\rm op} \\
     &\qquad\qquad\qquad \leq \| (\Sigma + \lambda I)^{-1/2}\Sigma^{1/2} \|_{\rm op}  \cdot \| \tilde \Sigma_1  \|_{\rm op} \cdot \| \Sigma^{1/2} (\Sigma + \lambda I)^{-1/2} \|_{\rm op}.
\end{align*}
Furthermore, $\Sigma^{1/2} (\Sigma + \lambda I)^{-1/2} \preceq I$, hence:
$$ \|\gamma (\Sigma + \lambda I)^{-1/2} \Sigma^{1/2} \tilde \Sigma_1 \Sigma^{1/2} (\Sigma + \lambda I)^{-1/2} \|_{\rm op} \leq \gamma < 1.$$
We can then apply Theorem 5.11 from~\cite{weidmann2012linear}, showing that the term inside the brackets in Eqn.~(\ref{eqn:factor}) is invertible, with inverse equal to:
\begin{align}
    \sum_{k=0}^{+\infty} \gamma^k [(\Sigma + \lambda I)^{-1/2} \Sigma^{1/2} \tilde \Sigma_1 \Sigma^{1/2} (\Sigma + \lambda I)^{-1/2}]^k. \label{eqn:gamma_sum}
\end{align} 
Hence, $\Sigma + \lambda I -\gamma \Sigma_1 $ is invertible, with inverse equal to:
$$ (\Sigma + \lambda I)^{-1/2} \left[I - \gamma (\Sigma + \lambda I)^{-1/2} \Sigma^{1/2} \tilde \Sigma_1 \Sigma^{1/2} (\Sigma + \lambda I)^{-1/2}  \right]^{-1} (\Sigma + \lambda I )^{-1/2}. $$

We will now upper-bound the operator norm of $(\gamma \Sigma_1 - \Sigma - \lambda I)^{-1}$. Let us take $f,g\in \cH$
    such that $g=(\lambda I + \Sigma -\gamma \Sigma_1)f$
    and $\|g\|_\cH=1$,
    we get
    \begin{align*}
        1 &= \| (\lambda I + \Sigma  -\gamma \Sigma_1) f \|_\cH^2         \\
        &= \lambda^2 \|f \|^2_\cH + 2 \lambda \langle f, \Sigma f \rangle_\cH - \lambda \gamma \langle f, (\Sigma_1 + \Sigma^*_1) f \rangle_\cH + \| (\Sigma - \gamma \Sigma_1) f\|_\cH^2\\
        & \geq \lambda^2 \|f \|^2_\cH + 2 \lambda \langle f, \Sigma f \rangle_\cH - \lambda \gamma \langle f, (\Sigma_1 + \Sigma^*_1) f \rangle_\cH.
    \end{align*}
    
    Moreover, we have:
    \begin{align*}
        \langle f, \Sigma_1 f \rangle_\cH & = \Ee_q [f(x) f(x')] \\
        & \leq  \Ee_q \left[\frac{f(x)^2}{2} + \frac{f(x')^2}{2} \right] \\
        & = \Ee_{x \sim p} \left[\frac{f(x)^2}{2} \right] + \Ee_{x' \sim p} \left[\frac{f(x')^2}{2} \right] \\
        & = \langle f, \Sigma f \rangle_\cH,
    \end{align*}
    because~$p$ is an invariant distribution. Similarly, $$\langle f, \Sigma_1^* f \rangle_\cH =\langle \Sigma_1 f,  f \rangle_\cH = \langle f, \Sigma_1 f \rangle_\cH  \leq \langle f, \Sigma f \rangle_\cH.$$
    
    Consequently, since $\gamma \leq 1$, 
    we get $1\geq
    \lambda^2 \|f\|^2 = 
    \lambda^2 \| (\lambda I
        + \Sigma 
        -\gamma \Sigma_1)^{-1}g \|^2_\cH$.
    We conclude by using the definition of
    the operator norm, \textit{i.e.},
    \begin{equation*}
        \| (\lambda I
        + \Sigma 
        -\gamma \Sigma_1)^{-1}\|_{\rm op}
    =\sup_{\|g\|_\cH=1}\| (\lambda I
        + \Sigma 
        -\gamma \Sigma_1)^{-1}g\|_\cH \leq 1/\lambda.
    \end{equation*}
\end{proof}

\begin{proof}[Proof of Proposition~\ref{prop3}] 
Consider the fixed point equation~(\ref{eqn:fixed}). Since $\lambda >0$, it is equivalent to:
$$V = \frac{1}{\lambda} \left[ \Sigma r + \gamma \Sigma_1 V - \Sigma V \right] . $$
As a consequence, any solution of this equation is in $\cH$. Using Lemma~\ref{lemma1}, it is unique and such that:
$$V = (\gamma \Sigma_1 - \Sigma - \lambda I)^{-1} \Sigma r. $$
\end{proof}

\begin{proof}[Proof of Proposition~\ref{prop4}] The fixed point equations verified by $V^*_\lambda$ and $V^*$ are respectively:
\begin{align}
    \Sigma r + (\gamma \Sigma_1 - \Sigma - \lambda I) V^*_\lambda &= 0. \label{eqn:flb} \\
    \Sigma r + (\gamma \Sigma_1 - \Sigma - \lambda I) V^* &= - \lambda V^* \label{eqn:f0}
\end{align}
Let $\bar V^* := \Sigma^{1/2} V^* $, $\bar V^*_\lambda := \Sigma^{1/2} V^*_\lambda $, and $\bar r := \Sigma^{1/2} r$. Then $\bar V^*$, $\bar V^*_\lambda$ and $\bar r$ are all in $\cH$. Using Lemma~\ref{prop1}, there exists $\tilde \Sigma_1 : \cH \rightarrow \cH$ with $\| \tilde \Sigma_1 \|_{\rm op} \leq 1$ such that $\Sigma_1 = \Sigma^{1/2} \tilde \Sigma_1 \Sigma^{1/2}$. Because of assumption~\ref{hypo:??}, this equality holds on $\overline{\cH} = L^2(p)$.  In particular, $\Sigma^{1/2} \Sigma_1 V^* = \Sigma \tilde \Sigma_1 \bar V^*$.

Left multiplying Eqns.~(\ref{eqn:flb}) and~(\ref{eqn:f0}) by $\Sigma^{1/2}$, we get:
\begin{align}
    \Sigma \bar r + (\gamma \Sigma \tilde \Sigma_1 - \Sigma - \lambda I) \bar V^*_\lambda &= 0. \label{eqn:flbs} \\
    \Sigma \bar r + (\gamma \Sigma \tilde \Sigma_1  - \Sigma - \lambda I) \bar V^* &= - \lambda \bar V^* \label{eqn:f0s}
\end{align}
Subtracting Eqns.~(\ref{eqn:flbs}) and~(\ref{eqn:f0s}), we get:
\begin{align}
   (\Sigma + \lambda I - \gamma \Sigma \tilde \Sigma_1) ( \bar V_\lambda^* - \bar V^*) = - \lambda  \bar V^*. \label{eqn:diff}
\end{align}
Since $\Sigma + \lambda I \succ 0$, then:
\begin{align*}
   (I - \gamma (\Sigma + \lambda I)^{-1} \Sigma \tilde \Sigma_1) ( \bar V_\lambda^* - \bar V^*) = - \lambda  (\Sigma + \lambda I)^{-1} \bar V^*.
\end{align*}
Let $\tilde \Sigma_{1,\lambda} := (\Sigma + \lambda I)^{-1} \Sigma \tilde \Sigma_1$. Since $(\Sigma + \lambda I)^{-1} \Sigma \preceq I$,  we know that $\| \gamma \tilde \Sigma_{1,\lambda} \|_{\rm op} \leq \gamma < 1$. Hence $(I - \gamma \tilde \Sigma_{1,\lambda})$ is invertible and:
\begin{align*}
    \bar V^*_\lambda - \bar V^* &= -\lambda (I - \gamma \tilde \Sigma_{1,\lambda})^{-1} (\Sigma + \lambda I)^{-1} \bar V^* \\
    & = -\lambda \sum_{k=0}^{+\infty} \gamma^k \tilde \Sigma_{1,\lambda}^k (\Sigma + \lambda I)^{-1} \Sigma^{1/2} V^* .
\end{align*}
Taking the $\cH$-norm on both sides, and using the isometry property~(\ref{eqn:isometry}), valid on $\overline{\cH} =L^2(p)$ since we are using a universal kernel:
\begin{align}
    \|\Sigma^{1/2} (V^*_\lambda -  V^*)\|_\cH & \leq \lambda \sum_{k=0}^{+\infty} \gamma^k \| \tilde \Sigma_{1,\lambda}^k (\Sigma + \lambda I)^{-1} \Sigma^{1/2} V^*\|_\cH \\
    \|V^*_\lambda -  V^*\|_{L^2(p)} &\leq \lambda \sum_{k=0}^{+\infty} \gamma^k \| (\Sigma + \lambda I)^{-1} \Sigma^{1/2} V^*\|_\cH \\
    &=  \frac{\lambda}{1-\gamma} \| (\Sigma + \lambda I)^{-1} \Sigma^{1/2} V^*\|_\cH \label{eqn:sum}.
\end{align}
Assuming that~$V^*$ verifies the source condition with constant~$\theta$, the norm on the right-hand side can be bounded as follows:
\begin{align*}
    \| (\Sigma + \lambda I)^{-1}\Sigma^{1/2} V^* \|_\cH &= \| (\Sigma + \lambda I)^{-1}  \Sigma^{(1+\theta)/2}\Sigma^{-\theta/2} V^* \|_\cH \\
    &=  \| (\Sigma + \lambda I)^{(\theta-1)/2} (\Sigma + \lambda I)^{-(1+\theta)/2}  \Sigma^{(1+\theta)/2}\Sigma^{-\theta/2} V^* \|_\cH\\
    & \leq \lambda^{(\theta - 1)/2} \| (\Sigma + \lambda I)^{-(1+\theta)/2}  \Sigma^{(1+\theta)/2}\Sigma^{-\theta/2} V^* \|_\cH,
\end{align*}
because $ 0 \prec (\Sigma + \lambda I)^{(\theta - 1)/2} \preceq \lambda^{(\theta - 1)/2} I$, since $(\theta - 1)/2 \leq 0$. Also, since $(1+\theta)/2 \geq 0$, we have: $ (\Sigma + \lambda I)^{-(1+\theta)/2} \Sigma^{(1+\theta)/2} \preceq I$,  hence:
\begin{align}
    \| (\Sigma + \lambda I)^{-1} \Sigma^{1/2} V^* \|_\cH & \leq  \lambda^{(\theta-1)/2} \|  \Sigma^{-\theta/2} V^* \|_\cH. \label{eqn:k0}
\end{align}
Combining Eqns.~(\ref{eqn:sum}) and~(\ref{eqn:k0}), we can then conclude that:
\begin{align*}
    \|V_\lambda^* - V^*\|_{L^2(p)} &  \leq \frac{\lambda^{\frac{1+\theta}{2}}}{1-\gamma}  \|\Sigma^{-\theta/2} V^* \|_\cH.
\end{align*}
\end{proof}

\begin{corollary} \label{cor_mercer} Assume that $K$ is a universal Mercer kernel, and that $V^* \in L^2(p)$ (which holds as soon as $r \in L^2(p)$, see Sec.~\ref{subseq}), then:
$$\|V_\lambda^* - V^*\|_{L^2(p)} \xrightarrow[\lambda \to 0^+]{} 0.$$
\end{corollary}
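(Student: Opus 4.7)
The plan is to recycle the chain of equalities from the proof of Proposition~\ref{prop4} up to the intermediate bound that does not yet invoke the source condition, and then close the argument by a dominated-convergence passage in the spectral basis furnished by Mercer's theorem.

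First I would re-examine the proof of Proposition~\ref{prop4}. The derivation leading to
$$\|V^*_\lambda - V^*\|_{L^2(p)} \leq \frac{\lambda}{1-\gamma} \,\|(\Sigma + \lambda I)^{-1} \Sigma^{1/2} V^*\|_\cH$$
relies only on Lemma~\ref{prop1}, the operator bound $\|\gamma\tilde\Sigma_{1,\lambda}\|_{\rm op} < 1$, and the isometry $\|f\|_{L^2(p)} = \|\Sigma^{1/2} f\|_\cH$ valid on $\overline{\cH}$. Under a universal kernel, $\overline{\cH} = L^2(p)$, so this bound is valid whenever $V^* \in L^2(p)$, with no quantitative source condition needed. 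It therefore suffices to show
$$\lambda^2 \,\|(\Sigma + \lambda I)^{-1} \Sigma^{1/2} V^*\|_\cH^2 \xrightarrow[\lambda \to 0^+]{} 0.$$

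Next I would apply Mercer's theorem to diagonalise $\Sigma$: as a compact, self-adjoint, non-negative operator on $\cH$, it admits an orthonormal eigenbasis $(f_i)_{i \geq 1}$ of $\cH$ with non-negative eigenvalues $(\mu_i)_{i\geq 1}$. A key substep is to deduce from universality that every $\mu_i$ is strictly positive: this follows from the fact (already used in Section~\ref{sec:td_algo}, citing \cite{steinwart2001influence}) that universality of $K$ forces the extended operator $\Sigma^e$ on $L^2(p)$ to be injective, so no zero eigenvalue can occur. Setting $g := \Sigma^{1/2} V^* \in \cH$, the isometry~(\ref{eqn:isometry}) gives $\|g\|_\cH = \|V^*\|_{L^2(p)} < \infty$, and expanding $g = \sum_i g_i f_i$ in the eigenbasis yields the diagonal identity
$$\lambda^2 \,\|(\Sigma + \lambda I)^{-1} g\|_\cH^2 = \sum_{i \geq 1} \frac{\lambda^2 \, g_i^2}{(\mu_i + \lambda)^2}.$$

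The final step is a dominated-convergence argument against the counting measure: each summand is majorised by the summable sequence $g_i^2$, and for every fixed $i$ the factor $\lambda^2/(\mu_i + \lambda)^2$ tends to $0$ as $\lambda \to 0^+$ because $\mu_i > 0$. Passing to the limit termwise gives the vanishing of the series, which combined with the preliminary bound concludes the proof. The main obstacle is really the strict positivity of all eigenvalues $\mu_i$ under universality; without it, a vanishing $\mu_i$ would leave a persistent contribution $g_i^2$ and rate-free convergence could not be obtained in this way. Once that observation is secured, the rest is a routine diagonal estimate.
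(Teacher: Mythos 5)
Your proof is correct and follows essentially the same route as the paper: the same source-condition-free bound $\|V^*_\lambda - V^*\|_{L^2(p)} \le \frac{\lambda}{1-\gamma}\,\|(\Sigma+\lambda I)^{-1}\Sigma^{1/2}V^*\|_\cH$ recycled from the proof of Proposition~\ref{prop4}, followed by a termwise (dominated-convergence) limit in a spectral expansion whose eigenvalues are strictly positive. The only, essentially cosmetic, difference is that you diagonalize $\Sigma$ on $\cH$ and expand $g=\Sigma^{1/2}V^*$ (implicitly using that the embedding $\cH\hookrightarrow L^2(p)$ is injective, which holds here since $p$ has full support and a Mercer kernel is continuous), whereas the paper first applies the isometry~(\ref{eqn:isometry}) to pass to $\|(\Sigma+\lambda I)^{-1}V^*\|_{L^2(p)}$ and expands $V^*$ in the Mercer eigenbasis of $L^2(p)$; the two expansions correspond under that isometry.
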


\begin{proof}[Proof of Corollary~\ref{cor_mercer}] 
We can reproduce the beginning of the proof of Prop.~\ref{prop4}, until Eqn.~(\ref{eqn:sum}):
\begin{align*}
    \|V^*_\lambda -  V^*\|_{L^2(p)} &\leq   \frac{\lambda}{1-\gamma} \| (\Sigma + \lambda I)^{-1} \Sigma^{1/2} V^*\|_\cH.
\end{align*}

Using the isometry property~(\ref{eqn:isometry})  because $K$ is a universal kernel:
\begin{align*}
    \|V^*_\lambda -  V^*\|_{L^2(p)} &\leq   \frac{\lambda}{1-\gamma} \| (\Sigma + \lambda I)^{-1} V^*\|_{L^2(p)}.
\end{align*}

Because $K$ is a Mercer kernel, there exists a sequence $(\psi_n)_{n \geq 1}$ in $L^2(p)$ which is an orthonormal eigenbasis of $\overline{\cH} = L^2(p)$ (because $K$ is universal) for the $L^2(p)$ inner product, with strictly positive eigenvalues $(\lambda_n)_{n \geq 1}$, ordered in decreasing order, such that~\cite{dieuleveut2016nonparametric}:
$$\forall n\geq 1, \quad \Sigma \psi_n = \lambda_n \psi_n.  $$
Then, since $V^* = \sum_{n \geq 1} \langle V^*, \psi_n \rangle_{L^2(p)} \psi_n$:
\begin{align*}
     \|V^*_\lambda -  V^*\|_{L^2(p)}^2 &\leq   \frac{\lambda^2}{(1-\gamma)^2} \| (\Sigma + \lambda I)^{-1} V^*\|^2_{L^2(p)} \\
     &= \frac{1}{(1-\gamma)^2} \sum_{n \geq 1} \frac{\lambda^2}{(\lambda + \lambda_n)^2} \langle V^*, \psi_n \rangle_{L^2(p)}^2 .
\end{align*}
For $\lambda >0$, the series on the right-hand side is dominated by $$\sum_{n \geq 1} \langle V^*, \psi_n \rangle_{L^2(p)}^2 = \|V\|_{L^2(p)}^2 < \infty, $$
and for each $n \geq 1$:
$$ \frac{\lambda^2}{(\lambda + \lambda_n)^2} \langle V^*, \psi_n \rangle_{L^2(p)}^2 \xrightarrow[\lambda \to 0^+]{}0,$$
because each $\lambda_n$ is strictly positive. Then by  Lebesgue's dominated convergence theorem~\cite{rudin}:
$$\|V_\lambda^* - V^*\|^2_{L^2(p)} \xrightarrow[\lambda \to 0^+]{} 0.$$
\end{proof}

\begin{proof}[Proof of Lemma~\ref{prop5}]
For $\beta = 0$, and $\lambda >0$, $V_t - V_\lambda^* \in \Sigma^{0/2}(\cH) = \cH$ is always true as proved in Prop.~\ref{prop3}, hence $W^0(t)$ is finite for all $t \geq 0$. Similarly, $W^1(t)$ is finite for all $t \geq 0$ because $V_t$ and $V^*_\lambda \in L^2(p)$.
\begin{align*}
    \frac{dW^0(t)}{dt} &= 2 \langle  V_t - V_\lambda^*,  \frac{dV_t}{dt} \rangle_\cH \\
    &=2 \langle  V_t - V_\lambda^*,  (A-\lambda I)V_t  + b \rangle_\cH \\
    &= 2 \langle V_t - V_\lambda^*,  (\gamma \Sigma_1 - \Sigma -\lambda I)V_t) + \Sigma r \rangle_\cH.
    \end{align*}
We remind that $V^*_\lambda$ is a solution of Eqn.~(\ref{eqn:fixed}). Then:
        \begin{align*}
    \frac{dW^0}{dt} &= 2\langle V_t - V_\lambda^*, (\gamma \Sigma_1 - \Sigma - \lambda I)(V_t - V_\lambda^*) \rangle_\cH \\
    &=2 \gamma \langle V_t - V_\lambda^*, \Sigma_1  (V_t - V_\lambda^*) \rangle_\cH -2 \lambda \langle  V_t - V_\lambda^*, V_t - V^*_\lambda \rangle_\cH - 2\langle V_t - V_\lambda^* , \Sigma(V_t - V_\lambda^*) \rangle_\cH \\
    &=2 \gamma \langle V_t - V_\lambda^*, \Sigma P (V_t - V_\lambda^*) \rangle_\cH - 2\lambda W^0(t) - 2W^{-1}(t) \\
    &=2 \gamma \langle \Sigma^{1/2} (V_t - V_\lambda^*), \Sigma^{1/2} P (V_t - V^*) \rangle_\cH - 2\lambda W^0(t) - 2W^{-1}(t), 
\end{align*}
where the third line results from Eqn.~(\ref{eqn:bellman_sigma}). Using Cauchy-Schwarz inequality for $\langle \cdot, \cdot \rangle_\cH$, the first term is bounded by:
\begin{align*} 2\gamma \langle \Sigma^{1/2} (V_t - V_\lambda^*), \Sigma^{1/2} P (V_t - V^*) \rangle_\cH  &\leq 2 \gamma \| \Sigma^{1/2} (V_t - V^*_\lambda ) \|_\cH \cdot \|\Sigma^{1/2} P (V_t - V_\lambda^*)  \|_\cH \\
& =2 \gamma \sqrt{W^{-1}(t)} \cdot \| P (V_t - V_\lambda^*)  \|_{L^2(p)} \\
& \leq 2 \gamma \sqrt{W^{-1}(t)} \cdot \| V_t - V_\lambda^* \|_{L^2(p)} \\
&= 2 \gamma W^{-1}(t),
\end{align*} 
where we have used successively Eqn.~(\ref{eqn:isometry}) (on an element of~$\cH$) and Lemma~\ref{prop2}.

Finally, we get:
\begin{align*}
    \frac{dW^0(t)}{dt} &\leq  2 \gamma W^{-1}(t) - 2 \lambda W^0(t) - 2 W^{-1}(t),
\end{align*}
where all of the above quantities are finite. 
\end{proof}

\begin{proof}[Proof of Proposition~\ref{prop6}]
We treat separately the two sets of assumptions.

 $\bullet$~ Under assumption~\ref{hypo:?}, we define the sequence of Polyak-Ruppert averaged iterates: $$\overline V_t := \frac{1}{t}\int_0^t V(s) ds, ~~\text{for~} t \geq 0.$$
 Lemma~\ref{prop5} can be easily adapted to the case where $\lambda = 0$, $\Sigma \succ 0$ and $V^* \in \cH$. The proof is the same, and all quantities are finite because $\|V^*\|_\cH$ is finite. Then we get:
$$\frac{d \|V_t - V^* \|^2_\cH}{dt} \leq - 2(1- \gamma) \|V_t - V^* \|^2_{L^2(p)} . $$
Let $T > 0$. Integrating between 0 and $T$ and dividing by $T$:
$$\frac{ W^0(T) -  W^0(0)}{T} \leq - 2(1- \gamma) \frac{1}{T}\int_0^T \|V_t - V^* \|^2_{L^2(p)} dt. $$
$$\frac{1}{T}\int_0^T \|V_t - V^* \|^2_{L^2(p)} dt \leq  \frac{1/2}{1-\gamma} \frac{W^0(0) - W^0(T)}{T} \leq \frac{1/2}{1-\gamma} \frac{W^0(0)}{T}. $$
Using Jensen's inequality:
$$\|\overline V_T - V^* \|^2_{L^2(p)} \leq \frac{1}{T}\int_0^T \|V_t - V^* \|^2_{L^2(p)} dt, $$
and then:
$$ \|\overline V_T - V^* \|^2_{L^2(p)} \leq \frac{1}{2(1-\gamma)} \frac{\|V^*\|_\cH^2}{T}. $$

$\bullet$~ Under assumption~\ref{hypo:??}, Lemma~\ref{prop5} gives:
\begin{align*}
    \frac{d \|V_t - V^*_\lambda \|^2_\cH}{dt} &\leq - 2 (1- \gamma) \|V_t - V^*_\lambda \|^2_{L^2(p)} - 2 \lambda \|V_t - V^*_\lambda \|_\cH^2 \\
    &\leq -2 \lambda \|V_t - V^*_\lambda \|_\cH^2.
\end{align*}
Using Grönwall's lemma, we directly get linear convergence of $V_t$ to $V^*_\lambda$ in $\cH$ norm:
$$ \|V_t - V_\lambda^* \|^2_\cH \leq \|V^*_\lambda\|^2_\cH e^{-2 t \lambda}.$$
\end{proof}

\subsection{Stochastic TD with \textit{i.i.d.}~sampling} \label{sec:proofs_iid}

First, we need to state a technical lemma which will be used several times:
\begin{lemma} \label{lemma2}
For any fixed $V \in L^2(p)$, and $n \geq 1$:
$$\Ee_q \| A_n V \|_\cH^2 \leq 2 M_\cH (1+\gamma^2) \|\Sigma^{1/2} V \|_\cH^2. $$
\end{lemma}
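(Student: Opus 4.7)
The plan is to expand $A_n V$ explicitly, bound its $\cH$-norm pointwise by $M_\cH$ times a scalar squared, then take expectations using stationarity and convert to $\|\Sigma^{1/2}V\|_\cH^2$ via the isometry (\ref{eqn:isometry}).

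First I would write, directly from the definition of $A_n$ as a combination of rank-one operators,
\begin{equation*}
A_n V = \bigl(\gamma V(x_n') - V(x_n)\bigr) \Phi(x_n),
\end{equation*}
where $V(x_n)$ and $V(x_n')$ are well-defined $q$-almost surely for $V\in L^2(p)$ since both $x_n$ and $x_n'$ have marginal distribution $p$. Note that when $V \in \cH$ this is the usual reproducing-property computation; to extend to $V \in L^2(p)$ one reads the expression via the extended operators defined in (\ref{eqn:cov_ext}).

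Next, using $\|\Phi(x_n)\|_\cH^2 = K(x_n,x_n) \leq M_\cH$, I would bound pointwise
\begin{equation*}
\|A_n V\|_\cH^2 \leq M_\cH \bigl(\gamma V(x_n') - V(x_n)\bigr)^2 \leq M_\cH \bigl(2\gamma^2 V(x_n')^2 + 2 V(x_n)^2\bigr),
\end{equation*}
using $(a-b)^2\le 2a^2+2b^2$. Taking expectation under $q(dx,dx')=p(dx)\kappa(x,dx')$, both marginals are~$p$ (the second one by stationarity of~$p$), hence $\Ee_q[V(x_n)^2] = \Ee_q[V(x_n')^2] = \|V\|_{L^2(p)}^2$, which yields
\begin{equation*}
\Ee_q\|A_n V\|_\cH^2 \leq 2M_\cH(1+\gamma^2) \|V\|_{L^2(p)}^2.
\end{equation*}

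Finally, by the isometry property (\ref{eqn:isometry}), $\|V\|_{L^2(p)}^2 = \|\Sigma^{1/2}V\|_\cH^2$ for $V\in\overline\cH$; if $V\in L^2(p)\setminus\overline\cH$ one decomposes $V$ along $\overline\cH$ and its orthogonal complement (the latter being annihilated by~$\Sigma^{1/2}$ acting on $L^2(p)$, as $\Sigma^{1/2}$ factors through the projection onto $\overline\cH$), so the isometry gives exactly the claimed bound. The only mild subtlety is this extension to general $V\in L^2(p)$ rather than $V\in\cH$; the core computation is otherwise just Cauchy--Schwarz and stationarity.
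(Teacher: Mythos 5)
Your proof is correct and follows essentially the same route as the paper's: write $A_n V=(\gamma V(x_n')-V(x_n))\,\Phi(x_n)$, bound $\|\Phi(x_n)\|_\cH^2\le M_\cH$ together with $(a-b)^2\le 2a^2+2b^2$, use that both marginals of $q$ equal $p$ by stationarity, and identify $\Ee_p[V(x)^2]=\langle V,\Sigma V\rangle_\cH=\|\Sigma^{1/2}V\|_\cH^2$. One caveat: your closing remark about $V\in L^2(p)\setminus\overline\cH$ does not work as stated. If you decompose $V=V_1+V_2$ with $V_2\perp\overline\cH$, then $V_2$ is indeed annihilated by the extended $\Sigma^{1/2}$, which only makes the right-hand side smaller, while $V_2$ still contributes to $\Ee_q\big[(\gamma V(x_n')-V(x_n))^2\big]$ on the left ($\Sigma^e V_2=0$ does not force $V_2=0$ $p$-a.e.\ unless the kernel is universal); for a non-universal kernel the claimed inequality can genuinely fail for such $V$. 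This is harmless in context: the lemma is only invoked under \ref{hypo:??}, where $\overline\cH=L^2(p)$ (or for differences of elements of $\cH$), and the paper's own proof carries the same implicit restriction, since it writes $\langle V,\Phi(x)\rangle_\cH$, i.e.\ argues on $\cH$ and extends by continuity to $\overline\cH$.
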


\begin{proof}[Proof of Lemma~\ref{lemma2}]
\begin{align*}
    \Ee_q \| A_n V \|_\cH^2 &= \Ee_q \| (\gamma \Phi(x_n) \otimes \Phi(x_n') - \Phi(x_n) \otimes \Phi(x_n)) V \|_\cH^2 \\
    & = \Ee_q \| \Phi(x_n) \otimes (\gamma \Phi(x_n') - \Phi(x_n)) V \|_\cH^2 \\
    & = \Ee_q \| \Phi(x_n) \|_\cH^2 | \langle V,  \gamma \Phi(x_n') - \Phi(x_n)  \rangle_\cH |^2 \\
    & \leq 2M_\cH (\gamma^2 \Ee_q[ \langle V, \Phi(x_n')\rangle_\cH^2 ] +\Ee_q[ \langle V, \Phi(x_n) \rangle_\cH^2] ) .
\end{align*}
Since the expectation is according to the  distribution $q$, the two random variables inside the expectations have the same marginal distribution~$p$, and their expectation is equal to:
\begin{align*}
    \Ee_{x \sim p}[ \langle V, \Phi(x) \rangle_\cH^2] &=\Ee_{x \sim p}[ \langle V, \langle V, \Phi(x) \rangle_\cH \Phi(x) \rangle_\cH] \\
    & =\Ee_{x \sim p}[ \langle V, \Phi(x) \otimes \Phi(x) V \rangle_\cH] \\
    & =\langle V, \Sigma V \rangle_\cH = \| \Sigma^{1/2} V \|_\cH^2,
\end{align*}
which yields the result.
\end{proof}

We now derive the stochastic equivalent of the Descent Lemma~\ref{prop5}.

\begin{lemma} \label{lemma3} Let $\sigma^2 := 10 M_\cH \|r\|^2_{L^2(p)} +  \left(\frac{8  (1+\gamma^2) }{(1-\gamma)^2}  + 16 (1+\gamma^2) \right) M_\cH  \| V^* \|^2_{L^2(p)}$.

Then for $n \geq 1$:
\begin{align*}
    \Ee W^0_n &\leq(1-2\rho_n \lambda + 2 \rho_n^2 \lambda^2) \Ee W^0_{n-1} - \left(2 \rho_n (1 - \gamma) - 8\rho_n^2(1+\gamma^2)  M_\cH \right) \Ee W^{-1}_{n-1} +4 \rho_n^2\sigma^2 .
\end{align*}
In particular, for $\rho_n \leq \min \left\{ \frac{1}{2\lambda}, \frac{1-\gamma}{8 M_\cH (1+\gamma^2)} =: \bar \rho \right\}$:
\begin{align*}
    \Ee W^0_n &\leq(1-\rho_n \lambda ) \Ee W^0_{n-1} -  \rho_n (1 - \gamma) \Ee W^{-1}_{n-1} +4 \rho_n^2  \sigma^2.
\end{align*}
\end{lemma}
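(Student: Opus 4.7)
The strategy is to mimic the continuous-time Descent Lemma (Lemma~\ref{prop5}) while carefully handling the stochastic noise. First, let $D_n := V_n - V^*_\lambda \in \cH$. Using the regularized fixed-point equation from Prop.~\ref{prop3}, namely $(A - \lambda I) V^*_\lambda + b = 0$ with $A = \gamma \Sigma_1 - \Sigma$ and $b = \Sigma r$, I rewrite the stochastic update~(\ref{eqn:iid_TD}) as
\begin{equation*}
D_n = D_{n-1} + \rho_n\bigl[(A_n - \lambda I)\, D_{n-1} + \epsilon_n\bigr], \qquad \epsilon_n := (A_n - A) V^*_\lambda + (b_n - b).
\end{equation*}
Since the $n$-th sample $(x_n, x_n')$ is drawn independently of $\F_{n-1}$, and $\Ee_q[A_n] = A$, $\Ee_p[b_n]=b$, the noise $\epsilon_n$ is conditionally mean-zero and $\Ee[A_n \mid \F_{n-1}] = A$.

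Next I would expand $W^0_n = \|D_n\|^2_\cH$ and take conditional expectation:
\begin{equation*}
\Ee[W^0_n \mid \F_{n-1}] = W^0_{n-1} + 2 \rho_n \langle D_{n-1}, (A - \lambda I) D_{n-1}\rangle_\cH + \rho_n^2\, \Ee\bigl[\|(A_n - \lambda I)D_{n-1} + \epsilon_n\|_\cH^2 \mid \F_{n-1}\bigr],
\end{equation*}
the cross-term with $\epsilon_n$ vanishing. For the linear term I reuse the computation from the proof of Lemma~\ref{prop5} \emph{pointwise} in $D_{n-1} \in \cH$: the non-expansiveness of $P$ on $L^2(p)$ (Lemma~\ref{prop2}) combined with Cauchy--Schwarz and the isometry~(\ref{eqn:isometry}) yield
$\langle D_{n-1}, (A - \lambda I) D_{n-1}\rangle_\cH \leq -(1-\gamma)\, W^{-1}_{n-1} - \lambda\, W^0_{n-1}$. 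For the quadratic term I apply $\|u+v\|^2 \leq 2\|u\|^2 + 2\|v\|^2$ to split it into a variance part and a noise part; then $\|(A_n - \lambda I) V\|^2 \leq 2\|A_n V\|^2 + 2\lambda^2 \|V\|^2$ combined with Lemma~\ref{lemma2} produces the $8\rho_n^2(1+\gamma^2)M_\cH\, W^{-1}_{n-1}$ contraction-loss term and the $2\rho_n^2 \lambda^2\, W^0_{n-1}$ term. The constant part is controlled via $\Ee\|\epsilon_n\|^2_\cH \leq \Ee\|A_n V^*_\lambda + b_n\|^2_\cH \leq 4M_\cH(1+\gamma^2) \|V^*_\lambda\|^2_{L^2(p)} + 2 M_\cH \|r\|^2_{L^2(p)}$, using Lemma~\ref{lemma2} once more and $\Ee\|b_n\|^2_\cH = \Ee[r(x_n)^2 K(x_n,x_n)] \leq M_\cH \|r\|^2_{L^2(p)}$.

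The main technical obstacle is to express the variance bound in terms of $\|V^*\|_{L^2(p)}$ rather than $\|V^*_\lambda\|_{L^2(p)}$ (which a priori can blow up with $1/\lambda$): I would invoke Prop.~\ref{prop4} with $\theta = -1$, which holds whenever $V^* \in L^2(p)$ without needing the source condition, giving $\|V^*_\lambda - V^*\|_{L^2(p)} \leq \|V^*\|_{L^2(p)}/(1-\gamma)$, hence $\|V^*_\lambda\|^2_{L^2(p)} \leq 2\bigl(1 + (1-\gamma)^{-2}\bigr) \|V^*\|^2_{L^2(p)}$. Tracking all constants (where the factors of $8$, $16$, $10$ in $\sigma^2$ accumulate from the repeated use of $\|u+v\|^2 \leq 2\|u\|^2 + 2\|v\|^2$) and applying the tower property yields the first displayed inequality. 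The simplified form then follows from two elementary observations: if $\rho_n \leq 1/(2\lambda)$ then $2\rho_n^2\lambda^2 \leq \rho_n \lambda$, so the $W^0_{n-1}$ coefficient becomes at most $1 - \rho_n \lambda$; and if $\rho_n \leq (1-\gamma)/(8 M_\cH(1+\gamma^2)) = \bar\rho$ then $8\rho_n^2 M_\cH(1+\gamma^2) \leq \rho_n (1-\gamma)$, so the $W^{-1}_{n-1}$ coefficient is at least $\rho_n(1-\gamma)$.
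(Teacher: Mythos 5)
Your route is essentially the paper's: expand $\|V_n-V^*_\lambda\|^2_\cH$, condition on $\cF_{n-1}$ so that the linear term reduces to the population descent inequality of Lemma~\ref{prop5}, bound the quadratic term via triangle-inequality splits and Lemma~\ref{lemma2}, and convert $\|V^*_\lambda\|_{L^2(p)}$ into $\|V^*\|_{L^2(p)}$ through Prop.~\ref{prop4} with $\theta=-1$. One genuine (and pleasant) deviation: by centering the noise as $\epsilon_n=(A_n-A)V^*_\lambda+(b_n-b)$ and using $\Ee\|\epsilon_n\|^2_\cH\le\Ee\|A_nV^*_\lambda+b_n\|^2_\cH$, you avoid the $\lambda V^*_\lambda$ term altogether, whereas the paper keeps $(A_n-\lambda I)V^*_\lambda+b_n$ and invokes Prop.~\ref{prop3} to absorb $\lambda^2\|V^*_\lambda\|^2_\cH$ into $M_\cH\|r\|^2_{L^2(p)}$; your resulting noise constant is dominated by $\sigma^2$, so the stated $4\rho_n^2\sigma^2$ term is still valid.

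There is, however, a bookkeeping slip in the $W^0$ coefficient. With the order of splits you describe — first $\|(A_n-\lambda I)D_{n-1}+\epsilon_n\|^2\le 2\|(A_n-\lambda I)D_{n-1}\|^2+2\|\epsilon_n\|^2$, then $\|(A_n-\lambda I)D_{n-1}\|^2\le 2\|A_nD_{n-1}\|^2+2\lambda^2\|D_{n-1}\|^2$ — the $\lambda^2$ term inherits the outer factor $2$ exactly as the $A_n$ term does (that is how you obtain $8\rho_n^2(1+\gamma^2)M_\cH\,W^{-1}_{n-1}$), so you get $4\rho_n^2\lambda^2\,W^0_{n-1}$, not the claimed $2\rho_n^2\lambda^2\,W^0_{n-1}$. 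As a consequence, under $\rho_n\le 1/(2\lambda)$ your estimate only yields the coefficient $1$ in front of $\Ee W^0_{n-1}$ rather than $1-\rho_n\lambda$, and this contraction is precisely what drives the geometric term later in Thm.~\ref{thm1}. The fix is to regroup as the paper does: write $(A_n-\lambda I)D_{n-1}+\epsilon_n=-\lambda D_{n-1}+(A_nD_{n-1}+\epsilon_n)$ and split off $-\lambda D_{n-1}$ first, which gives $2\rho_n^2\lambda^2\,W^0_{n-1}+8\rho_n^2(1+\gamma^2)M_\cH\,W^{-1}_{n-1}+4\rho_n^2\Ee\|\epsilon_n\|^2_\cH$ and hence the lemma with its stated constants (alternatively, strengthen the step-size condition to $\rho_n\le 1/(4\lambda)$). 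Everything else in your argument — the conditional unbiasedness, the pointwise use of the Lemma~\ref{prop5} inequality, and the $\theta=-1$ use of Prop.~\ref{prop4} — matches the paper.
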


\begin{proof}[Proof of Lemma~\ref{lemma3}]
We have the following decomposition, almost surely:
\begin{align*}
    W_n^0 & = \langle V_n - V^*_\lambda,V_n - V^*_\lambda \rangle_\cH \\
    &= \langle V_{n-1} + \rho_n ((A_n - \lambda I)V_{n-1}+b_n) - V^*_\lambda,V_{n-1} + \rho_n ((A_n - \lambda I)V_{n-1}+b_n) - V^*_\lambda \rangle_\cH \\
    &= \langle V_{n-1} - V^*_\lambda,V_{n-1} - V^*_\lambda \rangle_\cH + 2 \rho_n \langle V_{n-1} - V^*_\lambda, (A_n - \lambda I)V_{n-1} + b_n \rangle_\cH \\&\qquad\qquad\qquad\qquad\qquad\qquad + \rho_n^2 \| (A_n - \lambda I) V_{n-1} + b_n \|_\cH^2.
\end{align*}
Let $z_i := (x_i, x_i')$, for $i \geq 1$. The $z_i$ are \textit{i.i.d.}~with probability distribution~$q$. Taking the expectation with respect to the filtration $\cF_n := \sigma(z_1,...,z_n)$, we get three terms:
\begin{align*}
    \Ee W_n^0 & = \Ee W_{n-1}^0 + 2 \rho_n \Ee \left[ \langle V_{n-1} - V^*_\lambda, (A_n - \lambda I)V_{n-1} + b_n \rangle_\cH \right] \\&\qquad\qquad\qquad\qquad\qquad\qquad + \rho_n^2 \Ee \left[ \| (A_n - \lambda I) V_{n-1} + b_n \|^2_\cH \right].
\end{align*}

$\bullet$ We first consider the inner product:
\begin{align*}
    \Ee \left[ \langle V_{n-1} - V^*_\lambda, (A_n - \lambda I)V_{n-1} + b_n \rangle_\cH \right] &= \Ee \left[ \Ee \left[ \langle V_{n-1} - V^*_\lambda, (A_n - \lambda I)V_{n-1} + b_n \rangle_\cH | \cF_{n-1}\right] \right] \\
    &= \Ee \left[ \langle V_{n-1} - V^*_\lambda, (A - \lambda I)V_{n-1} + b \rangle_\cH  \right] \\
    &\leq - (1-\gamma ) \Ee W_{n-1}^{-1} - \lambda \Ee W^0_{n-1},
\end{align*}
where we used the expectation of Lemma~\ref{prop5} on the last line:
\begin{align*}
    \langle V - V^*_\lambda, (A - \lambda I)V + b \rangle_\cH \leq - (1- \gamma) \|V - V^*_\lambda \|^2_{L^2(p)} - \lambda \|V - V^*_\lambda \|_\cH^2.
\end{align*}

$\bullet$ Now we need to upper-bound the final variance term:
\begin{align*}
    \Ee \left[ \| (A_n - \lambda I) V_{n-1} + b_n \|^2_\cH \right] & \leq 2  \Ee \left[ \| \lambda (V_{n-1} - V^*_\lambda) \|_\cH^2 \right] + 2  \Ee \left[ \| A_n V_{n-1} + b_n - \lambda V^*_\lambda \|_\cH^2 \right] \\
    & \leq 2 \lambda^2 \Ee W^0_{n-1} + 4 \Ee \left[ \| A_n(V_{n-1} - V^*_\lambda)  \|_\cH^2 \right] \\
    &\qquad\qquad\qquad\qquad + 4 \Ee \left[ \|(A_n - \lambda I) V^*_\lambda + b_n \|_\cH^2 \right] \\
    & \leq 2 \lambda^2 \Ee W^0_{n-1} + 4 \Ee \left[ \Ee \left[ \| A_n(V_{n-1} - V^*_\lambda) \|_\cH^2 | \cF_{n-1} \right] \right] \\
    &\qquad\qquad\qquad\qquad+ 4 \Ee \left[ \|(A_n - \lambda I) V^*_\lambda + b_n \|_\cH^2 \right] \\
    & \leq 2 \lambda^2 \Ee W^0_{n-1} + 8M_\cH (1+\gamma^2) \Ee W^{-1}_{n-1} \\
    & \qquad\qquad\qquad\qquad + 4 \Ee \left[ \|(A_n - \lambda I) V^*_\lambda + b_n \|_\cH^2 \right],
    \end{align*}
the last inequality being an application of Lemma~\ref{lemma3} to $V_{n-1} - V^*_\lambda$,  deterministic given $\cF_{n-1}$.

Next, we are going to show that the remaining  variance term $\Ee \left[ \|(A_n - \lambda I) V_{\lambda}^* + b_n \|^2_\cH \right]$ is bounded and give an explicit upper-bound~$\sigma^2$. This is the variance of the updates at the optimum:
    \begin{align*}
    \Ee \left[ \|(A_n - \lambda I) V_{\lambda}^* + b_n \|^2_\cH \right] & \leq  2 \lambda^2 \|V_\lambda^*\|^2_\cH + 2 \Ee \left[ \|A_n V^*_\lambda + b_n \|^2_\cH \right] \\
    & \leq  2 M_\cH \|r \|^2_{L^2(p)} + 2 \Ee \left[ \|A_n V^*_\lambda + b_n \|_\cH^2 \right],
    \end{align*}
using Prop.~\ref{prop3}. Then:
    \begin{align*}
    2 \Ee \left[ \|A_n V^*_\lambda + b_n \|_\cH^2 \right]&\leq  4 \Ee \left[ \|A_n (V^*_\lambda-V^*) \|^2_\cH \right] + 4 \Ee \left[ \|A_n V^* + b_n \|^2_\cH \right] \\
    &\leq  8 M_\cH (1+\gamma^2) \|\Sigma^{1/2} (V^*_\lambda - V^*) \|_\cH^2  + 4 \Ee \left[ \|A_n V^* + b_n \|^2_\cH \right],
    \end{align*}
applying Lemma \ref{lemma3} to $V^*_\lambda - V^*$. Then, using Prop.~\ref{prop4} with $\theta=-1$ (which always holds):
    \begin{align*}
    2 \Ee \left[ \|A_n V^*_\lambda + b_n \|_\cH^2 \right]& \leq \frac{8 M_\cH (1+\gamma^2) \|V^*\|_{L^2(p)}^2}{(1-\gamma)^2}   + 4 \Ee \left[ \|A_n V^* + b_n \|^2_\cH \right]   \\
    & \leq  \frac{8 M_\cH (1+\gamma^2) \|V^*\|_{L^2(p)}^2}{(1-\gamma)^2}   + 8 \Ee \left[ \|A_n V^* \|_\cH^2 \right] +   8 \Ee \left[ \|b_n \|_\cH^2 \right] \\
    & \leq \frac{8 M_\cH (1+\gamma^2) \|V^*\|_{L^2(p)}^2}{(1-\gamma)^2} + 16 M_\cH (1+\gamma^2) \| V^* \|^2_{L^2(p)}  \\
   & \qquad\qquad\qquad\qquad\qquad\qquad+ 8 M_\cH \|r\|^2_{L^2(p)},
\end{align*}
where we have used again Lemma~\ref{lemma3} applied to $V^*$, and the fact that:
\begin{align*}
    \Ee [\|b_n\|_\cH^2] =\Ee [r(x_n)^2 \|\Phi(x_n)\|_\cH^2] \leq M_\cH \Ee_p [r(x_n)^2 ] =  M_\cH \| r\|^2_{L^2(p)}.
\end{align*}

Hence the variance $\Ee \left[ \|(A_n - \lambda I) V_{\lambda}^* + b_n \|^2_\cH \right]$ is finally bounded by: $$\sigma^2 := 10 M_\cH \|r\|^2_{L^2(p)} +  \left(\frac{8  (1+\gamma^2) }{(1-\gamma)^2}  + 16 (1+\gamma^2) \right) M_\cH  \| V^* \|^2_{L^2(p)}.$$ Back to the main term, we get:
\begin{align*}
    \Ee \left[ \| (A_n - \lambda I) V_{n-1} + b_n \|^2_\cH \right] & \leq 2 \lambda^2 \Ee W^0_{n-1} + 8M_\cH (1+\gamma^2) \Ee W^{-1}_{n-1} + 4 \sigma^2. 
    \end{align*}
Then, we get the result:
\begin{align*} 
   \Ee W^0_n &\leq \Ee W_{n-1}^0 - 2 \rho_n (1-\gamma) \Ee W^{-1}_{n-1} - 2 \rho_n \lambda \Ee W^0_{n-1} \\ &\qquad + 2 \rho_n^2   \lambda^2 \Ee W^0_{n-1} + 8 \rho_n^2 M_\cH (1+\gamma^2) \Ee W^{-1}_{n-1} + 4 \rho^2_{n-1} \sigma^2.
\end{align*}
\end{proof}

\begin{proposition} \label{easythm} Under assumption \textup{\ref{hypo:?}}, there exists an $n_0 >0$ such that, when using a constant step size $\rho = 1/\sqrt{n}$ and $\lambda = 0$,  the Polyak-Ruppert averaged iterates $\overline V_n$, for $n \geq n_0$ verify:
$$\Ee \| \overline V_n - V^* \|_{L^2(p)}^2 \leq O(1/\sqrt{n}).$$
\end{proposition}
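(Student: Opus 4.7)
The plan is to mimic the continuous-time argument of Proposition~\ref{prop6} but starting from the stochastic descent inequality of Lemma~\ref{lemma3}, then to convert a telescoping sum into a bound on the averaged iterate via Jensen's inequality. Under \textup{\ref{hypo:?}}, $V^*\in\cH$ is itself the unique fixed point of the unregularized equation $\Sigma r + (\gamma\Sigma_1 - \Sigma)V = 0$, so we may replace $V^*_\lambda$ by $V^*$ throughout and work with $\lambda=0$. I will therefore reinterpret $W_n^0 := \|V_n - V^*\|_\cH^2$ and $W_n^{-1} := \|V_n - V^*\|_{L^2(p)}^2$, which are finite since $V^*\in\cH$.

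First, I would reproduce the computation of Lemma~\ref{lemma3} with $\lambda = 0$ and $V^*_\lambda$ replaced by $V^*$; the only point that uses $\lambda > 0$ is the bound on $\|V^*_\lambda\|_\cH$ by $\sqrt{M_\cH}\|r\|_{L^2(p)}/\lambda$, which is here replaced by the finite quantity $\|V^*\|_\cH$. This yields a constant $\tilde\sigma^2$ depending only on $\|V^*\|_\cH$, $\|r\|_{L^2(p)}$, $M_\cH$, and $\gamma$, and the inequality
\begin{equation*}
\Ee W_n^0 \leq \Ee W_{n-1}^0 - \bigl(2\rho(1-\gamma) - 8\rho^2 M_\cH(1+\gamma^2)\bigr)\Ee W_{n-1}^{-1} + 4\rho^2 \tilde\sigma^2.
\end{equation*}
For $n$ large enough that $\rho = 1/\sqrt n \leq \bar\rho := (1-\gamma)/\bigl(8 M_\cH(1+\gamma^2)\bigr)$, the coefficient of $\Ee W_{n-1}^{-1}$ is at most $-\rho(1-\gamma)$.

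Second, I would sum the inequality from $k=1$ to $n$ and telescope the $W^0$ terms, using $W_0^0 = \|V^*\|_\cH^2$ and $W_n^0 \geq 0$:
\begin{equation*}
\rho(1-\gamma)\sum_{k=1}^{n}\Ee W_{k-1}^{-1} \leq \|V^*\|_\cH^2 + 4 n \rho^2 \tilde\sigma^2.
\end{equation*}
Dividing by $n\rho(1-\gamma)$ gives
\begin{equation*}
\frac{1}{n}\sum_{k=1}^n \Ee W_{k-1}^{-1} \leq \frac{\|V^*\|_\cH^2}{n\rho(1-\gamma)} + \frac{4\rho \tilde\sigma^2}{1-\gamma}.
\end{equation*}
Finally, applying Jensen's inequality to the convex function $V\mapsto \|V - V^*\|_{L^2(p)}^2$ yields
\begin{equation*}
\Ee \|\overline V_n - V^*\|_{L^2(p)}^2 \leq \frac{1}{n}\sum_{k=1}^{n}\Ee W_{k-1}^{-1},
\end{equation*}
and substituting $\rho = 1/\sqrt n$ gives the announced $O(1/\sqrt n)$ rate, valid for $n \geq n_0 := \lceil 1/\bar\rho^2\rceil$.

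The main technical obstacle is simply checking that Lemma~\ref{lemma3} really does go through at $\lambda = 0$ under \textup{\ref{hypo:?}}: every appeal to the regularized fixed point in the derivation of that lemma must be replaced by the unregularized fixed point, which is legitimate because $\Sigma$ is invertible and $V^*\in\cH$, so all the $\cH$-norms of differences with $V^*$ remain finite and the variance term $\Ee\|A_n V^* + b_n\|_\cH^2$ is directly bounded (bypassing the $1/(1-\gamma)^2$ factor coming from Prop.~\ref{prop4}). Everything else is routine summation and Jensen's inequality.
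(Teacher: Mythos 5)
Your proposal is correct and follows essentially the same route as the paper's own proof: apply the descent inequality of Lemma~\ref{lemma3} with $\lambda=0$ (where $V^*_\lambda=V^*$ under \ref{hypo:?}), telescope the constant-step-size recursion, use Jensen's inequality on the averaged iterate, and set $\rho=1/\sqrt{n}$ with $n_0=1/\bar\rho^2$. Your extra remark verifying that the lemma's derivation survives at $\lambda=0$ (replacing the Prop.~\ref{prop3} bound by the finite $\|V^*\|_\cH$) is a point the paper passes over silently, but it changes nothing in the argument.
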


\begin{proof}[Proof of Proposition~\ref{easythm}]
We use Lemma~\ref{lemma3} with $\lambda = 0$: if  $\rho_k \leq \bar \rho$, 
$$\Ee W_k^0 \leq \Ee W_{k-1}^0 - \rho_k (1-\gamma) \Ee W_{k-1}^{-1} + 4 \rho_k^2 \sigma^2. $$
We use a constant step size $\rho$. Then:
\begin{align*}
    \Ee W_{k-1}^{-1} &\leq  \frac{\Ee W^0_{k-1} - \Ee W^0_k}{\rho(1-\gamma)} +  \frac{4 \rho\sigma^2}{1-\gamma} .
\end{align*}
Summing for $k$ and dividing by $n$, we get a telescoping sum:
\begin{align*}
    \frac{1}{n}\sum_{k=1}^n \Ee W_{k-1}^{-1} &\leq  \frac{\Ee W^0_{0} - \Ee W^0_{n}}{n \rho(1-\gamma)} +  \frac{4 \rho\sigma^2}{1-\gamma} \leq  \frac{\Ee W^0_{0} }{n \rho(1-\gamma)} +  \frac{4 \rho\sigma^2}{1-\gamma}.
\end{align*}
Using Jensen's inequality:
$$ \Ee \| \overline V_n - V^* \|_{L^2(p)}^2 \leq \frac{\|V^*\|^2_\cH}{(1-\gamma) \rho n} + \frac{4 \rho\sigma^2}{1-\gamma}. $$

We choose a constant step size $\rho = 1/\sqrt{n}$. For $n \geq n_0 := 1/\bar \rho^2$, $\rho_n \leq \bar \rho$, hence the application of Lemma~\ref{lemma3} is valid and we get a rate:
$$\Ee \| \overline V_n - V^* \|_{L^2(p)}^2 \leq O(1/\sqrt{n}).$$
\end{proof}

\begin{proof}[Proof of Theorem~\ref{thm1}] For each case, we first assume that $\lambda$ and $\rho_n$ are such that the conditions of Lemma~\ref{lemma3} are satisfied. Then we pick particular choices of~$\lambda$ and~$\rho_n$ to obtain the convergence rate, and check that the conditions are indeed satisfied.

(a) Let $\lambda >0$ and $\rho$ a constant step size such that $\rho \leq \bar \rho$ and $\rho \leq 1/(2\lambda)$. In this case, Lemma~\ref{lemma3} reads:
\begin{align*}
    \Ee W^0_n &\leq(1-\rho \lambda ) \Ee W^0_{n-1} -  \rho (1 - \gamma) \Ee W^{-1}_{n-1} +4 \rho^2  \sigma^2.
\end{align*}
In particular:
\begin{align} \Ee W_n^0 \leq (1 - \rho \lambda) \Ee W_{n-1}^0  + 4 \rho^2 \sigma^2. \label{eqn:fpi}
\end{align}
Removing the fixed point of this inequality~(\ref{eqn:fpi}) on both sides, we get:
\begin{align}
    \Ee W_n^0 - \frac{4 \rho \sigma^2}{\lambda} \leq (1 - \rho \lambda) \left( \Ee W_{n-1}^0 - \frac{4 \rho \sigma^2}{\lambda} \right). \label{eqn:geom} 
\end{align}
Since $\rho \lambda \leq 1/2$, this is a contracting geometric sequence and, applying~(\ref{eqn:geom}) recursively, we get:
\begin{align*}
    \Ee W_n^0 - \frac{4 \rho \sigma^2}{\lambda} & \leq (1 - \rho \lambda)^n \left( \Ee W_{0}^0 - \frac{4 \rho \sigma^2}{\lambda} \right) \\
    & \leq (1 - \rho \lambda)^n  \Ee W_{0}^0 .
\end{align*}
Finally, using Prop.~\ref{prop3}:
\begin{align}
    \Ee W_n^0  \leq \frac{4 \rho \sigma^2}{\lambda} + (1 - \rho \lambda)^n \frac{M_\cH \|r\|^2_{L^2(p)}}{\lambda^2 } . \label{eqn:resA}
\end{align}
We now consider specific choices of $\lambda$ and $\rho$. Let $\lambda = \lambda_0 n^{- \frac{1}{3+\theta}}$ and $\rho = \frac{\log n}{\lambda n}$, for some $\lambda_0$. Let us look at the conditions of Lemma~\ref{lemma3}:
\begin{itemize}
    \item $\rho \leq 1/(2\lambda)$ if and only if $\frac{\log n}{n} \leq 1/2$, which is true for all $n \geq 1$.
    \item $\rho \leq \bar \rho$ if and only if $(\log n) n^{\frac{1}{3+\theta} - 1}/\lambda_0 \leq \bar \rho$. Since $\theta > -1$, $\frac{1}{3+\theta} - 1 < -1/2$, hence $(\log n) n^{\frac{1}{3+\theta} - 1} / \bar \rho \rightarrow 0$. In particular it is bounded for all $n \geq 1$. Hence defining:
    $$\underline{\lambda}_\theta^{(0)} := \max \{(\log n) n^{\frac{1}{3+\theta} - 1} / \bar \rho ~|~ n \geq 1 \},$$ then for $\lambda_0 \geq \underline{\lambda}_\theta^{(0)}$, $\rho \leq \bar \rho$ is satisfied. Note that $\underline{\lambda}_\theta^{(0)}$ is independent of~$n$.
\end{itemize}
For this choice of $\lambda$ and $\rho$, we get:
$$\Ee W_n^0  \leq \frac{4 \sigma^2  \log n}{ \lambda_0^2 n^{1-\frac{2}{3+\theta}}} + \left(1 - \frac{\log n}{n} \right)^n \frac{M_\cH \|r\|^2_{L^2(p)}}{\lambda_0^2 n^{-\frac{2}{3+\theta}} } . $$

For $n \geq 1$, $\log\left(1 - \frac{\log n}{n} \right) \leq - \frac{\log n}{n}$, hence $\left(1 - \frac{\log n}{n} \right)^n \leq 1/n$ and:
$$\Ee W_n^0  \leq \frac{4 \sigma^2  (\log n) n^{-\frac{1+\theta}{3+\theta}} }{\lambda_0^2} +  \frac{M_\cH \|r\|^2_{L^2(p)}}{ \lambda_0^2 } n^{-\frac{1+\theta}{3+\theta}} . $$

We can then obtain convergence to $V^*$ at the same rate, using Prop.~\ref{prop4}:
\begin{align*}
    \Ee \|V_n - V^* \|_{L^2(p)}^2 &\leq 2 M_\cH \Ee \|V_n - V^*_\lambda \|^2_\cH + 2 \|V^*_\lambda - V^* \|^2_{L^2(p)} \\
    & \leq \frac{8 M_\cH \sigma^2  }{\lambda_0^2}(\log n) n^{-\frac{1+\theta}{3+\theta}} +  \frac{2M_\cH^2 \|r\|^2_{L^2(p)}}{ \lambda_0^2} n^{-\frac{1+\theta}{3+\theta}} \\
    &\qquad\qquad\qquad\qquad+ \frac{2  \| \Sigma^{-\theta/2} V^*\|_\cH^2 \lambda_0^{1+\theta}}{(1-\gamma)^2} n^{-\frac{1+\theta}{3+\theta}}.
\end{align*}

(b) Let $\lambda >0$ and $\rho$ a constant step size such that $\rho \leq \bar \rho$ and $\rho \leq 1/(2\lambda)$. In this case, Lemma~\ref{lemma3} reads, for each $k \in \{1, ..., n\}$:
\begin{align}
    \Ee W^0_k &\leq(1-\rho \lambda ) \Ee W^0_{k-1} -  \rho (1 - \gamma) \Ee W^{-1}_{k-1} +4 \rho^2  \sigma^2. \label{eqn:rec}
\end{align}
Using~(\ref{eqn:rec}) recursively, we obtain:
\begin{align*}
   \Ee W_n^0 &\leq (1 - \rho \lambda)^n \Ee W_0^0 - (1-\gamma) \rho \sum_{k=1}^n  (1-\rho \lambda)^{n-k}  \Ee W^{-1}_{k-1} + 4 \sigma^2 \rho^2 \sum_{k=1}^n  (1-\rho \lambda)^{n-k} .
\end{align*}
Re-arranging the terms, we get:
\begin{align*}
   \sum_{k=1}^n  (1-\rho \lambda)^{n-k}  \Ee W^{-1}_{k-1} &\leq \frac{(1 - \rho \lambda)^n}{\rho (1-\gamma)} \Ee W_0^0 - \frac{1}{\rho (1- \gamma)} \Ee W_n^0   + \frac{4 \sigma^2 \rho}{1-\gamma} \sum_{k=1}^n  (1-\rho \lambda)^{n-k} \\
   \sum_{k=1}^n  (1-\rho \lambda)^{n-k}  \Ee W^{-1}_{k-1} &\leq \frac{(1 - \rho \lambda)^n}{\rho (1-\gamma)} \frac{M_\cH \|r\|_{L^2(p)}^2}{\lambda^2} + \frac{4 \sigma^2 \rho}{1-\gamma} \sum_{k=1}^n  (1-\rho \lambda)^{n-k},
\end{align*}
using Prop.~\ref{prop3} on the last line.

Since $\sum_{k=1}^n  (1-\rho \lambda)^{n-k} = \frac{1 - (1 - \rho \lambda)^n}{\rho \lambda}$, we get:
\begin{align*}
    \frac{\sum_{k=1}^n  (1-\rho \lambda)^{n-k}  \Ee W^{-1}_{k-1}}{\sum_{k=1}^n (1-\rho \lambda)^{n-k}} &\leq \frac{ (1 - \rho \lambda)^n}{ 1 - (1 - \rho \lambda)^n} \frac{M_\cH \|r\|_{L^2(p)}^2}{\lambda(1-\gamma)} + \frac{4 \sigma^2 \rho}{1-\gamma}
\end{align*}

Using Jensen's inequality, we get:
\begin{align}
    \Ee \| V_n^{\textit{(e)}} - V^*_\lambda \|_{L^2(p)}^2 &\leq \frac{ (1 - \rho \lambda)^n}{ 1 - (1 - \rho \lambda)^n} \frac{M_\cH \|r\|_{L^2(p)}^2}{\lambda(1-\gamma)} + \frac{4 \sigma^2 \rho}{1-\gamma}, \label{eqn:ewa}
\end{align}
with $V_n^{\textit{(e)}} := \frac{\sum_{k=1}^n  (1-\rho \lambda)^{n-k}  V_{k-1}}{\sum_{k=1}^n (1-\rho \lambda)^{n-k}}$ the exponentially weighted average iterate.

Let $\lambda = \lambda_0 n^{- \frac{1}{2+\theta}}$, for some $\lambda_0 > 0$,  and $\rho = \frac{\log n}{\lambda n}$. The conditions of Lemma~\ref{lemma3} are:
\begin{itemize}
    \item $\rho \leq 1/(2\lambda)$ if and only if $\frac{\log n}{n} \leq 1/2$, which is true for all $n \geq 1$.
    \item $\rho \leq \bar \rho$ if and only if $(\log n) n^{\frac{1}{2+\theta} - 1}/\lambda_0 \leq \bar \rho$. Since $\theta > -1$, $\frac{1}{2+\theta} - 1 < 0$, hence $(\log n) n^{\frac{1}{2+\theta} - 1} / \bar \rho \rightarrow 0$. In particular it is bounded for all $n \geq 1$. Hence defining:
    $$\underline{\lambda}^{\textit{(e)}}_\theta := \max \{(\log n) n^{\frac{1}{2+\theta} - 1} / \bar \rho ~|~ n \geq 1 \},$$ then for $\lambda_0 \geq \underline{\lambda}^{\textit{(e)}}_\theta$, $\rho \leq \bar \rho$ is satisfied. Again, $\underline{\lambda}^{\textit{(e)}}_\theta$ is independent of~$n$.
\end{itemize}

For this choice of parameters, for $n \geq 2$: $$\left(1- \rho \lambda \right)^n = \left(1- \frac{\log n}{n} \right)^n = \exp \left(n \log \left(1- \frac{\log n}{n} \right) \right) \leq \exp\left(n \left(- \frac{\log n}{n} \right)\right) \leq \frac{1}{n} \leq \frac 12.$$

Hence:
\begin{align*}
    \Ee \|V_n^{\textit{(e)}} - V^*_\lambda \|^2_{L^2(p)} &\leq 2 (1 - \rho \lambda)^n \frac{n^{\frac{1}{2+\theta}} M_\cH \|r\|_{L^2(p)}^2}{\lambda_0(1-\gamma)} + \frac{4 \sigma^2 (\log n)n^{-\frac{1+\theta}{2+\theta}}}{\lambda_0(1-\gamma)} \\
    & \leq  \frac{2}{n} \cdot \frac{n^{\frac{1}{2+\theta}} M_\cH \|r\|_{L^2(p)}^2}{\lambda_0(1-\gamma)} + \frac{4 \sigma^2 (\log n)n^{-\frac{1+\theta}{2+\theta}}}{\lambda_0(1-\gamma)} \\
    &\leq   \frac{2 n^{-\frac{1+\theta}{2+\theta}} M_\cH \|r\|_{L^2(p)}^2}{\lambda_0(1-\gamma)} + \frac{4 \sigma^2 (\log n)n^{-\frac{1+\theta}{2+\theta}}}{\lambda_0(1-\gamma)} .
\end{align*}

We then obtain convergence to $V^*$ at the same rate, using Prop.~\ref{prop4}:
\begin{align*}
    \Ee \|V_n^{\textit{(e)}} - V^* \|_{L^2(p)}^2 &\leq 2  \Ee \|V_n^{\textit{(e)}} - V^*_\lambda \|^2_{L^2(p)} + 2 \|V^*_\lambda - V^* \|^2_{L^2(p)} \\
    & \leq  \frac{4  M_\cH \|r\|_{L^2(p)}^2}{\lambda_0(1-\gamma)} n^{-\frac{1+\theta}{2+\theta}} + \frac{8 \sigma^2 }{\lambda_0(1-\gamma)} (\log n)n^{-\frac{1+\theta}{2+\theta}}  \\
    &\qquad\qquad\qquad\qquad\qquad+ \frac{2  \| \Sigma^{-\theta/2} V^*\|_\cH^2 \lambda_0^{1+\theta}}{(1-\gamma)^2} n^{-\frac{1+\theta}{2+\theta}}.
\end{align*}

(c)  Let $n \geq 1$ and $\lambda >0$. We will consider a different step size schedule: first constant, then decreasing. For $k \in \{1, ..., 
\lfloor n/2\rfloor -1 \}$, set $\rho_k = \frac{2\log n}{\lambda n} =: \rho$. Then for $k \in \{\lfloor n/2\rfloor , ..., n \}$, set $\rho_k = \frac{1}{\lambda k}$.

$\bullet$~ We first look at the first $\lfloor n/2\rfloor - 1$ iterates.

Assume that $\lambda$ is chosen such that $\rho \leq \min \{1/(2\lambda) , \bar \rho \}$.  Under this condition, using the result~(\ref{eqn:resA}) that we derived above for setting~(a):
\begin{align}
    \Ee W_{\lfloor n/2\rfloor - 1}^0  \leq \frac{4 \rho \sigma^2}{\lambda} + (1 - \rho \lambda)^{\lfloor n/2\rfloor - 1} \frac{M_\cH \|r\|^2_{L^2(p)}}{\lambda^2 } . \label{eqn:1half}
\end{align}

$\bullet$~ Now for the next iterates, $\rho_k = \frac{1}{\lambda k}$. We also assume that $\lambda$ is chosen such that $\forall k \in \{ \lfloor n/2 \rfloor, ..., n \}$, $\rho_k \leq \min \{1/(2\lambda), \bar \rho \}$.  Under this condition, for $k \in \{ \lfloor n/2 \rfloor, ..., n \}$, Lemma~\ref{lemma3} reads:
\begin{align*}
    \Ee W^0_k &\leq(1-\rho_k \lambda ) \Ee W^0_{k-1} -  \rho_k (1 - \gamma) \Ee W^{-1}_{k-1} +4 \rho_k^2  \sigma^2.
\end{align*}
Re-arranging the terms:
\begin{align}
    \Ee W^{-1}_{k-1} & \leq  \frac{1}{1-\gamma} \left(\frac{1}{\rho_k}- \lambda \right) \Ee W^0_{k-1} - \frac{1}{1-\gamma} \frac{1}{\rho_k} \Ee W_k^0 + \frac{4 \sigma^2}{1-\gamma} \rho_k. \label{eqn:teles}
\end{align}
The step size is such that:
$$1/\rho_k - \lambda = \lambda k  - \lambda =\lambda (k-1)= 1/\rho_{k-1}, $$
where the very last equality only holds for $k \leq \lfloor n/2 \rfloor +1$ (because of overlapping notations).

Summing the above inequalities~(\ref{eqn:teles}) for $k \in \{ \lfloor n/2 \rfloor, ..., n \}$, we obtain a telescoping sum:
\begin{align*}
   \sum_{k=\lfloor n/2 \rfloor}^{n}  \Ee W^{-1}_{k-1} & \leq  \frac{1}{1-\gamma} \sum_{k=\lfloor n/2 \rfloor}^{n}   \left(\frac{\Ee W^0_{k-1}}{\rho_{k-1}} - \frac{\Ee W_k^0}{\rho_k}  \right) + \frac{4 \sigma^2}{1-\gamma} \sum_{k=\lfloor n/2 \rfloor}^{n} \rho_k \\
   & \leq \frac{1}{1-\gamma} \lambda (\lfloor n/2 \rfloor-1) \Ee W^0_{\lfloor n/2 \rfloor-1} + \frac{4 \sigma^2}{1-\gamma} \sum_{k=\lfloor n/2 \rfloor}^{n} \frac{1}{\lambda k} \\
   & \leq \frac{\lambda n}{2(1-\gamma)} \Ee W^0_{\lfloor n/2 \rfloor-1} + \frac{4 \sigma^2}{1-\gamma} \frac{1+\log n}{\lambda} .
 \end{align*}
 
 Using the result~(\ref{eqn:1half}) on the first half of the iterates, (for $n \geq 3$ so that $1+\log(n) \leq 2 \log n$):
 \begin{align*}
  \sum_{k=\lfloor n/2 \rfloor}^{n}  \Ee W^{-1}_{k-1} & \leq \frac{\lambda n}{2(1-\gamma)} \left[ \frac{4 \rho \sigma^2}{\lambda} + (1 - \rho \lambda)^{\lfloor n/2\rfloor - 1} \frac{M_\cH \|r\|^2_{L^2(p)}}{\lambda^2 }  \right] + \frac{8 \sigma^2}{1-\gamma} \frac{\log n}{\lambda} \\
  & \leq \frac{\lambda n}{2(1-\gamma)} \left[ \frac{8 (\log n) \sigma^2}{\lambda^2 n} + \left(1 - \frac{2 \log n}{n}\right)^{\lfloor n/2\rfloor - 1} \frac{M_\cH \|r\|^2_{L^2(p)}}{\lambda^2 }  \right] \\
  &\qquad\qquad\qquad\qquad\qquad\qquad\qquad\qquad\qquad\qquad+ \frac{8 \sigma^2}{1-\gamma} \frac{\log n}{\lambda} .
\end{align*}

Let us look at the central term:
\begin{align*}
    \left(1 - \frac{2 \log n}{n}\right)^{\lfloor n/2\rfloor - 1} &= \left(1 - \frac{2 \log n}{n}\right)^{n/2}\left(1 - \frac{2 \log n}{n}\right)^{\lfloor n/2\rfloor - 1 - n/2}
\end{align*}
Since $2 \log n / n \in [0, 1]$ for any $n \geq 1$,  and $\lfloor n/2 \rfloor - n/2 - 1 \geq -2$, we have:
\begin{align*}
    \left(1 - \frac{2 \log n}{n}\right)^{\lfloor n/2\rfloor - 1 - n/2} \leq \left(1 - \frac{2 \log n}{n}\right)^{-2} \leq  \max_{u \geq 1} \left[ \left(1 - \frac{2 \log u}{u} \right)^{-2} \right] \leq 16.
\end{align*}
Hence:
\begin{align*}
    \left(1 - \frac{2 \log n}{n}\right)^{\lfloor n/2\rfloor - 1} &\leq 16 \left(1 - \frac{2 \log n}{n}\right)^{n/2} \\
    &\leq 16 \exp \left( n/2 \log \left(1 - \frac{2 \log n}{n} \right) \right) \\
    &\leq 16 \exp \left(- n/2 \times \frac{2 \log n}{n} \right) \leq 16/n.
\end{align*}

Coming back to the telescoping sum:
\begin{align*}
  \sum_{k=\lfloor n/2 \rfloor}^{n}  \Ee W^{-1}_{k-1} 
  & \leq \frac{\lambda n}{2(1-\gamma)} \left[ \frac{8( \log n) \sigma^2}{\lambda^2 n} + \frac{16}{n}\frac{M_\cH \|r\|^2_{L^2(p)}}{\lambda^2 }  \right] + \frac{8 \sigma^2}{1-\gamma} \frac{\log n}{\lambda} .
\end{align*}
Dividing by $n - \lfloor n/2 \rfloor +1 \geq n/2$:
\begin{align*}
  \frac{1}{n - \lfloor n/2 \rfloor +1}\sum_{k=\lfloor n/2 \rfloor}^{n}  \Ee W^{-1}_{k-1} 
  & \leq \frac{1 }{(1-\gamma)} \left[ \frac{8 (\log n) \sigma^2}{\lambda n} + \frac{16}{n}\frac{M_\cH \|r\|^2_{L^2(p)}}{\lambda }  \right] + \frac{16 \sigma^2}{1-\gamma} \frac{\log n}{\lambda n} .
\end{align*}
All the terms are of order $\tilde O(\frac{\log n}{\lambda n})$.

Consider the $n$-th tail averaged iterate: $$V_n^{\textit{(t)}} := \frac{1}{n - \lfloor n/2 \rfloor +1}\sum_{k=\lfloor n/2 \rfloor}^{n}  V_{k-1}.$$
Using Jensen's inequality, we have a bound on its distance to $V^*_\lambda$:
\begin{align*}
  \Ee \| V_n^{\textit{(t)}} - V^*_\lambda \|_{L^2(p)}^2 & \leq  \frac{16}{n}\frac{M_\cH \|r\|^2_{L^2(p)}}{\lambda (1-\gamma)}  + \frac{24 \sigma^2}{1-\gamma} \frac{\log n}{\lambda n} .
\end{align*}

Now we need to choose $\lambda$ such that $\rho_k \leq \min \{1/(2\lambda), \bar \rho \}$, for all $k$. Let $\lambda = \lambda_0 n^{-\frac{1}{2+\theta}}$.

$\bullet~$ For the first half, $\rho = \frac{2 \log n}{\lambda n}$, and $\rho \leq 1/(2\lambda)$ if and only if $\log n/n \leq 4$, which is true for $n \geq 9$.

Now $\rho \leq \bar \rho$  is equivalent to $\frac{2\log n}{ \lambda n} = (\log n) n^{\frac{1}{2+\theta} -1}/\lambda_0 \leq \bar \rho$. Since $\theta >-1$, $\frac{1}{2+\theta} -1 < 0$ and $(\log n) n^{\frac{1}{2+\theta} -1} / \bar \rho  \rightarrow 0$. In particular it is bounded for all $n \geq 1$. Hence using again:
    $$\underline{\lambda}^{\textit{(e)}}_\theta = \max \{(\log n) n^{\frac{1}{2+\theta} - 1} / \bar \rho ~|~ n \geq 1 \},$$ then for $\lambda_0 \geq \underline{\lambda}^{\textit{(e)}}_\theta$, $\rho \leq \bar \rho$ is satisfied.

$\bullet~$ For the second half, $\rho_k$ is decreasing with $k$, hence a sufficient condition is that: $$\frac{1}{\lambda \lfloor n/2 \rfloor} = \rho_{\lfloor n/2 \rfloor} \leq \min\{1/(2\lambda), \bar \rho\}.$$
For $n \geq 4$, $\lfloor n/2 \rfloor \geq 2$ and $\rho_{\lfloor n/2 \rfloor} \leq 1/(2 \lambda)$. On the other hand, the second condition reads:
$$\frac{1}{\lambda \lfloor n/2 \rfloor} = \frac{n^{\frac{1}{2+\theta}}}{\lambda_0 \lfloor n/2 \rfloor} \leq \frac{4n^{\frac{1}{2+\theta}-1}}{\lambda_0 } \leq \bar \rho ,$$
for $n \geq 2$. Since $\theta >-1$, $\frac{1}{2+\theta} -1 < 0$ and $4 n^{\frac{1}{2+\theta} -1} / \bar \rho  \rightarrow 0$. In particular it is bounded for all $n \geq 1$. Hence using:
    $$\underline{\lambda}^{\textit{(t)}}_\theta := \max \{ \max \{4 n^{\frac{1}{2+\theta} - 1} / \bar \rho ~|~ n \geq 1 \}, \underline{\lambda}^{\textit{(e)}}_\theta \},$$ then for $\lambda_0 \geq \underline{\lambda}^{\textit{(t)}}_\theta$, $\rho_k \leq \bar \rho$ is satisfied for all $k$.

For this specific choice of $\lambda$, we have the final bound:
\begin{align*}
   \Ee \|V_n^{\textit{(t)}} - V^* \|_{L^2(p)}^2 &\leq 2 \Ee \|V_n^{\textit{(t)}} - V^*_\lambda \|^2_\cH + 2 \|V^*_\lambda - V^* \|^2_{L^2(p)} \\
    &\leq \frac{32}{n}\frac{M_\cH \|r\|^2_{L^2(p)}}{\lambda (1-\gamma)}  + \frac{48 \sigma^2}{1-\gamma} \frac{\log n}{\lambda n} + \frac{2  \| \Sigma^{-\theta/2} V^*\|_\cH^2 \lambda_0^{1+\theta}}{(1-\gamma)^2} n^{-\frac{1+\theta}{2+\theta}} \\
    & \leq \frac{32 M_\cH \|r\|^2_{L^2(p)}}{ \lambda_0(1-\gamma)} n^{-\frac{1+\theta}{2+\theta}} + \frac{48 \sigma^2}{\lambda_0(1-\gamma)} (\log n)n^{-\frac{1+\theta}{2+\theta}} \\
    &\qquad\qquad\qquad\qquad\qquad\quad + \frac{2  \| \Sigma^{-\theta/2} V^*\|_\cH^2 \lambda_0^{1+\theta}}{(1-\gamma)^2} n^{-\frac{1+\theta}{2+\theta}}.
\end{align*}
Finally, we define $\underline{\lambda}_\theta := \max \{\underline{\lambda}_\theta^{(0)}, \underline{\lambda}_\theta^{(e)}, \underline{\lambda}_\theta^{(t)} \}$ which is used in the theorem as lower bound on~$\lambda_0$.
\end{proof}

\subsection{Stochastic TD with Markovian  sampling} \label{sec:proofs_markov}

We begin by reproducing Lemma~9 from~\cite{bhandari2018finite}: 
\begin{lemma}[Control of couplings] \label{lemma_cpl}
 Consider two random variables $X$ and $Y$ such that:
 $$ X \rightarrow x_n \rightarrow x_{n+\tau} \rightarrow Y $$
 forms a Markov chain, for some fixed $n\geq 1$ with $\tau>0$. Assume the Markov chain mixes at uniform geometric rate. Let $X'$ and $Y'$ denote independent copies drawn from the marginal distributions of $X$ and $Y$, so that $$\mathbb{P}(X'=\cdot, Y'=\cdot) = \mathbb{P}(X=\cdot) \otimes \mathbb{P}(Y=\cdot). $$ Then for any bounded function $h$:
 $$| \Ee [h(X, Y)] - \Ee[h(X', Y')]| \leq 2 \|h\|_\infty m \mu^\tau. $$
\end{lemma}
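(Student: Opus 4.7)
The plan is a coupling argument that exploits the bottleneck structure of the Markov chain at the two ``bridge'' variables $x_n$ and $x_{n+\tau}$. The idea is that conditioning on these two variables completely decouples $X$ from $Y$, reducing the problem to comparing the joint law of $(x_n, x_{n+\tau})$ with the product of its marginals, which is exactly where the mixing assumption gives a quantitative bound.

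First, I would introduce the conditional expectation
$$g(x,y) := \Ee\bigl[h(X,Y) \,\bigm|\, x_n = x,\, x_{n+\tau} = y\bigr].$$
Since $X \to x_n \to x_{n+\tau} \to Y$ is a Markov chain, the two one-sided Markov properties $X \perp (x_{n+\tau},Y) \mid x_n$ and $Y \perp (x_n,X) \mid x_{n+\tau}$ together imply $X \perp Y \mid (x_n, x_{n+\tau})$; so $g$ factors as a product of two conditional expectations depending on $x$ only and on $y$ only, respectively, and trivially satisfies $\|g\|_\infty \leq \|h\|_\infty$. Using stationarity (recall $x_n \sim p$ for every $n$), the marginals of $x_n$ and $x_{n+\tau}$ are both $p$, and a computation with the tower property gives
$$\Ee[h(X,Y)] = \int_\cX \!\!\int_\cX g(x,y)\, \kappa^\tau(x, dy)\, p(dx), \qquad \Ee[h(X',Y')] = \int_\cX \!\!\int_\cX g(x,y)\, p(dy)\, p(dx),$$
where $\kappa^\tau(x, \cdot) := \Pp(x_{n+\tau} \in \cdot \mid x_n = x)$. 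For the second identity I use that $X'$ and $Y'$ are independent with the same marginals as $X$ and $Y$, so averaging $g(x,y)$ against $p(dx) p(dy)$ reconstructs $\Ee[h(X',Y')]$.

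Subtracting and applying the standard inequality $|\int f\, d(\mu - \nu)| \leq 2 \|f\|_\infty d_{TV}(\mu, \nu)$ to the inner integral yields
$$\bigl|\Ee[h(X,Y)] - \Ee[h(X',Y')]\bigr| \leq \int_\cX 2 \|h\|_\infty\, d_{TV}\bigl(\kappa^\tau(x, \cdot),\, p\bigr)\, p(dx) \leq 2 \|h\|_\infty m \mu^\tau,$$
where the last inequality uses the uniform geometric mixing assumption \textup{(A3)} applied to the $\tau$-step kernel (the chain started from $x_n = x$ reaches a distribution within $m \mu^\tau$ of $p$ after $\tau$ steps). The only mild subtleties are the conditional independence $X \perp Y \mid (x_n, x_{n+\tau})$ justifying the factorization of $g$, and keeping track of the factor $2$ in the TV inequality; everything else is bookkeeping.
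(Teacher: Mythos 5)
The paper gives no proof of this lemma: it is quoted verbatim as Lemma~9 of \cite{bhandari2018finite}, so you are supplying an argument the paper only cites. Your argument is correct and is essentially the standard one (the cited proof conditions on one side and compares the conditional law of $Y$ with its marginal in total variation; conditioning on both bridge variables $x_n, x_{n+\tau}$ as you do is the same idea). Two small points to tighten. First, $g(x,y):=\Ee[h(X,Y)\mid x_n=x, x_{n+\tau}=y]$ does not ``factor as a product of two conditional expectations'' unless $h$ itself is a product; what is true, and what your identity for $\Ee[h(X',Y')]$ actually requires, is that the conditional law of $(X,Y)$ given $(x_n,x_{n+\tau})=(x,y)$ is the product of a law depending only on $x$ and a law depending only on $y$, i.e.\ $g(x,y)=\iint h(u,v)\,\Pp(X\in du\mid x_n=x)\,\Pp(Y\in dv\mid x_{n+\tau}=y)$; this follows from the two one-sided Markov properties you list and should be stated in this form. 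Second, your computation uses stationarity (both $x_n$ and $x_{n+\tau}$ have law $p$) to write $\Ee[h(X',Y')]=\iint g(x,y)\,p(dy)\,p(dx)$ and to identify the joint law of $(x_n,x_{n+\tau})$ as $p(dx)\,\kappa^\tau(x,dy)$; this is legitimate in the paper's setting, where the chain is drawn with initial distribution $p$, but it is an assumption beyond the bare statement of the lemma (with a non-stationary start the same argument still works after centering at the true marginal of $x_{n+\tau}$, at the cost of a worse constant). With these clarifications, the chain of identities, the inequality $|\int f\,d(\mu-\nu)|\le 2\|f\|_\infty d_{TV}(\mu,\nu)$ for the paper's normalization of $d_{TV}$, and the uniform bound $d_{TV}(\kappa^\tau(x,\cdot),p)\le m\mu^\tau$ from \hyperref[hypo:???]{\textbf{\textup{(A3)}}} give exactly the claimed bound.
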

Note that, here, $\otimes$ does not refer to the outer product in the RKHS~$\cH$ but to the independent product of probability distributions.

Then we can state a descent lemma, similar to Lemma~\ref{lemma3}: 
\begin{lemma} \label{lemma_proj} Assume that $\|V^*_\lambda\|_\cH \leq B$ and that the Markov chain mixes geometrically. Let:
\begin{align*}
    \left\{
    \begin{array}{l}
        G^2 := 4 M_\cH^2 B^2  + \lambda^2 B^2 + M_\cH R^2/2 \\
        L := 12 M_\cH B + 2 \sqrt{M_\cH} R \\
        C :=2M_\cH B+ \lambda B + \sqrt{M_\cH} R \\
        C':= 8 M_\cH B^2 + 4 \sqrt{M_\cH} B R.
    \end{array}
\right.
\end{align*}

Then for $n \geq 1$ and $\tau >1$:
\begin{align} \label{eqn:desc_proj}
    \Ee W_n^0 & \leq (1-2\rho_n \lambda) \Ee W_{n-1}^0 -2 \rho_n (1-\gamma) \Ee W_{n-1}^{-1}  + 2\rho_n \left( 2 C' m \mu^\tau + LC \sum_{k=n-\tau}^{n-1} \rho_k \right) + 4 G^2 \rho_n^2.
\end{align}
\end{lemma}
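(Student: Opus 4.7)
The plan is to adapt the i.i.d.~descent lemma (Lemma~\ref{lemma3}) by inserting, between the current iterate $V_{n-1}$ and the current sample $(x_n,x_{n+1})$, the auxiliary iterate $V_{n-\tau-1}$, which depends only on $x_1,\dots,x_{n-\tau}$ and is therefore $\tau$ mixing steps away from $(x_n,x_{n+1})$. Writing $U_n := (A_n - \lambda I) V_{n-1} + b_n$ for the unprojected increment, non-expansiveness of $\Pi_B$ at $V^*_\lambda$ (using $\|V^*_\lambda\|_\cH \leq B$) yields
\begin{align*}
W_n^0 \leq W_{n-1}^0 + 2\rho_n \langle V_{n-1} - V^*_\lambda, U_n\rangle_\cH + \rho_n^2 \|U_n\|_\cH^2.
\end{align*}
The variance term is handled deterministically: using $\|V_{n-1}\|_\cH \leq B$, $\|\Phi(x)\|_\cH \leq \sqrt{M_\cH}$ and $|r|\leq R$, the three contributions $\|A_n V_{n-1}\|_\cH \leq 2 M_\cH B$, $\|\lambda V_{n-1}\|_\cH \leq \lambda B$ and $\|b_n\|_\cH \leq \sqrt{M_\cH}R$ combine into the stated bound $\|U_n\|_\cH^2 \leq 4G^2$.

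The cross term is the core of the proof. Define $h_n(V) := \langle V - V^*_\lambda, (A_n - \lambda I) V + b_n\rangle_\cH$ and its mean-path analogue $\bar h(V) := \langle V - V^*_\lambda, (A - \lambda I) V + b\rangle_\cH$, and split
\begin{align*}
\Ee h_n(V_{n-1}) = \underbrace{\Ee[h_n(V_{n-1}) - h_n(V_{n-\tau-1})]}_{(\mathrm{I})} + \underbrace{\Ee[h_n(V_{n-\tau-1}) - \bar h(V_{n-\tau-1})]}_{(\mathrm{II})} + \underbrace{\Ee\, \bar h(V_{n-\tau-1})}_{(\mathrm{III})}.
\end{align*}
For $(\mathrm{I})$, non-expansiveness of $\Pi_B$ at each previous iterate gives $\|V_k - V_{k-1}\|_\cH \leq \rho_k \|U_k\|_\cH \leq \rho_k C$, so $\|V_{n-1} - V_{n-\tau-1}\|_\cH \leq C \sum_{k=n-\tau}^{n-1}\rho_k$; combined with the Lipschitz constant $L$ of $V\mapsto h_n(V)$ on the $B$-ball (obtained by expanding the quadratic and bounding via $\|V\|_\cH, \|V^*_\lambda\|_\cH \leq B$ and $\|A_n - \lambda I\|_{\rm op} \leq 2 M_\cH + \lambda$), this yields $|(\mathrm{I})| \leq LC \sum_{k=n-\tau}^{n-1}\rho_k$.

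For $(\mathrm{II})$, the map $(V,x,x') \mapsto h_n(V)$ is uniformly bounded by $C'$ on the $B$-ball, and crucially $V_{n-\tau-1}$ is $\sigma(x_1,\dots,x_{n-\tau})$-measurable while $h_n$ reads $(x_n,x_{n+1})$: Lemma~\ref{lemma_cpl} together with assumption~\hyperref[hypo:???]{\textbf{\textup{(A3)}}} then gives $|(\mathrm{II})| \leq 2 C' m\mu^\tau$. For $(\mathrm{III})$, the deterministic inequality underlying Lemma~\ref{prop5}, namely $\bar h(V) \leq -(1-\gamma) \|V - V^*_\lambda\|_{L^2(p)}^2 - \lambda \|V-V^*_\lambda\|_\cH^2$, produces descent terms at index $n-\tau-1$; a further Lipschitz estimate on $\bar h$ converts them back to index $n-1$, with error again of order $C\sum_k\rho_k$ absorbed into the $LC\sum_k\rho_k$ term, and the coefficient $1-2\rho_n\lambda$ emerges from the $-2\lambda \rho_n \Ee W_{n-1}^0$ contribution of $(\mathrm{III})$. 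The main obstacle is purely bookkeeping: the four constants $G, L, C, C'$ must be tracked correctly through the Lipschitz and coupling steps, and the index shift in $(\mathrm{III})$ is harmless only because projection keeps all iterates uniformly bounded, which is what makes the $\tau$-backward-look argument work at all in the infinite-dimensional RKHS setting.
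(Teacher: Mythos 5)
Your proposal is correct and, at its core, is the paper's own argument: non-expansiveness of $\Pi_B$ (valid because $\|V^*_\lambda\|_\cH\leq B$), a deterministic bound $\|U_n\|_\cH^2\leq 4G^2$ on the squared increment, and a bias control that combines a Lipschitz backward shift over $\tau$ iterates with the coupling Lemma~\ref{lemma_cpl}. The one structural difference lies in how the cross term is split. The paper writes $\Ee h_n(V_{n-1})=\Ee \bar h(V_{n-1})+\Ee\,\zeta(V_{n-1},z_n)$ with $\zeta:=h_n-\bar h$, keeps the mean-path descent of Lemma~\ref{prop5} at index $n-1$, and shifts \emph{only} the noise $\zeta$ back to $V_{n-1-\tau}$; you shift the whole empirical form back and then convert the descent produced by your term $(\mathrm{III})$ from index $n-\tau-1$ to index $n-1$ by a second Lipschitz estimate. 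The two routes are equivalent -- your $(\mathrm{I})$ plus that conversion is exactly $\Ee[\zeta(V_{n-1},z_n)-\zeta(V_{n-1-\tau},z_n)]$ -- but the stated constant $L$ is the Lipschitz constant of $\zeta$, in which the $\lambda I$ terms and the common parts of $A_n$ and $A$ cancel; bounding your two differences separately gives $\mathrm{Lip}(h_n)+\mathrm{Lip}(\bar h)\approx L+6\lambda B$, so the error is not quite ``absorbed'' into $LC\sum_{k}\rho_k$ and you obtain \eqref{eqn:desc_proj} only with marginally enlarged constants (same rates, since $\lambda\to 0$ in the applications). Similarly, the sup-norm bound fed to Lemma~\ref{lemma_cpl} should be the bound $|\zeta|\leq 2B\,(4M_\cH B+2\sqrt{M_\cH}R)=C'$, which holds unconditionally, rather than a bound on $h_n$ itself (whose bound $2BC\leq C'$ needs $\lambda B\leq 2M_\cH B+\sqrt{M_\cH}R$); since your $(\mathrm{II})$ is literally $\Ee\,\zeta(V_{n-1-\tau},z_n)$, this is a relabeling rather than a gap.
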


\begin{proof}[Proof of Lemma~\ref{lemma_proj}]
Because of correlations between samples, the proof of Lemma~\ref{lemma3} breaks here:
$$ \Ee [\langle V_{n-1} - V^*_\lambda, (A_n - \lambda I) V_{n-1} + b_n \rangle_\cH ] \neq \Ee [\langle V_{n-1} - V^*_\lambda, (A - \lambda I) V_{n-1} + b \rangle_\cH ] .$$
A similar thing occurs in the variance term, where we cannot apply Lemma~\ref{lemma2}. An easy fix is to assume that what is inside the variance remains bounded a.s. This is allowed by our projection step. We can now assume that a.s., $\forall n, \|V_n \|_\cH \leq B$. Hence a.s.:
$$ \| A_n V_{n-1} \|_\cH \leq \| A_n \|_{\rm op} B \leq 2 M_\cH B. $$
Indeed, for $f \in \cH$:
\begin{align*}
    \| A_n f\|_\cH & \leq \| \Phi(x_n) \otimes \Phi(x_n') f\|_{\cH} +   \| \Phi(x_n) \otimes \Phi(x_n) f\|_{\cH} \\
    & \leq \big( |\langle f, \Phi(x'_n) \rangle_\cH | +  |\langle f, \Phi(x_n) \rangle_\cH |\big) \| \Phi(x_n) \|_\cH \\
    & \leq 2 \|f \|_\cH \sqrt{M_\cH} \sqrt{M_\cH} = 2 M_\cH \|f \|_\cH.
\end{align*}
Also, since the reward function is uniformly bounded by~$R$:
\begin{align*}
    \| b_n\|^2_\cH & = \|r(x_n) \Phi(x_n) \|^2_\cH \leq R^2 M_\cH.
\end{align*}
Finally, since $\Pi_B$ is a contraction mapping in $\cH$ norm, this will not impact the proof.
 
\textbf{Decomposition of errors.} Let us now reproduce the beginning of the proof of Lemma~\ref{lemma3}. We have this decomposition a.s.:
\begin{align*}
    W_n^0 & = \| V_n - V^*_\lambda\|_\cH^2 \\
    &=  \| \Pi_B [V_{n-1} + \rho_n ((A_n - \lambda I)V_{n-1} + b_n)] - \Pi_B V^*_\lambda \|_\cH^2 \\
    & \leq \| V_{n-1} + \rho_n ((A_n - \lambda I)V_{n-1} + b_n) - V^*_\lambda \|^2_\cH \\
    &= \| V_{n-1} - V^*_\lambda\|_\cH^2 + 2 \rho_n \langle V_{n-1} - V^*_\lambda, (A_n - \lambda I)V_{n-1} + b_n \rangle_\cH \\&\qquad\qquad\qquad\qquad\qquad\qquad\qquad\qquad + \rho_n^2 \| (A_n - \lambda I) V_{n-1} + b_n \|_\cH^2 \\
    &\leq W_{n-1}^0 + 2 \rho_n \langle V_{n-1} - V^*_\lambda, (A_n - \lambda I)V_{n-1} + b_n \rangle_\cH \\&\qquad\qquad\qquad\qquad\qquad\qquad\qquad\qquad + 2 \rho_n^2 \| (A_n - \lambda I) V_{n-1}\|^2 + 2 \rho_n^2 \| b_n \|^2 \\
    &\leq W_{n-1}^0 + 2 \rho_n \langle V_{n-1} - V^*_\lambda, (A_n - \lambda I)V_{n-1} +b_n \rangle_\cH \\
    &\qquad\qquad\qquad\qquad\qquad\qquad\qquad\qquad + 4\rho_n^2 (4M_\cH^2 B^2 + \lambda^2 B^2) +2 \rho_n^2 R^2 M_\cH .
\end{align*}
 
 Taking the expectation with respect to $\cF_n = \sigma(z_1,...,z_n)$ (where $z_i = (x_i, x'_i)$), we get three terms:
\begin{align*}
    \Ee W_n^0 & \leq \Ee W_{n-1}^0 + 2 \rho_n \Ee \left[ \langle V_{n-1} - V^*_\lambda, (A_n - \lambda I)V_{n-1} + b_n \rangle_\cH \right] \\
    &\qquad\qquad\qquad+ \rho_n^2 \underbrace{(16 M_\cH^2 B^2  + 4\lambda^2 B^2 +2 M_\cH R^2 )}_{:=4 G^2}.
\end{align*}

We then deal with the central expectation.
\begin{align*}
    \Ee \left[ \langle V_{n-1} - V^*_\lambda, (A_n - \lambda I)V_{n-1} + b_n \rangle_\cH \right] &=\Ee \left[ \langle V_{n-1} - V^*_\lambda, (A - \lambda I)V_{n-1} + b \rangle_\cH \right] \\
    &\qquad+ \Ee \left[ \langle V_{n-1} - V^*_\lambda, (A_n -A)V_{n-1} + (b_n - b) \rangle_\cH \right].
\end{align*}

The first term has already been treated in Lemma~\ref{prop5}:
\begin{align*}
    \Ee \left[ \langle V_{n-1} - V^*_\lambda, (A - \lambda I)V_{n-1} + b \rangle_\cH \right] \leq - (1-\gamma) \Ee W_{n-1}^{-1} - \lambda \Ee W^0_{n-1}.
\end{align*}

To control the remaining expectation (the bias), we must use a coupling argument. We use the notation:
$$ \zeta(V_{n-1}, z_n) := \langle V_{n-1} - V^*_\lambda, (A_n - A) V_{n-1} + (b_n - b) \rangle_\cH. $$
Note that in general:
$$\Ee \zeta(V_{n-1}, z_n) = \Ee [ \Ee [ \zeta(V_{n-1}, z_n) | \cF_{n-1}] ] \neq 0,$$
where $\cF_{k} = \sigma(z_1,...,z_k) =\sigma(z_1, V_1,...,z_k, V_k) $. The dependence between the random variables is summarized in the following diagram.

\begin{center}
\begin{tikzpicture}
\matrix[matrix of math nodes,column sep=2em,row
sep=3em,cells={nodes={circle,draw,minimum width=3em,inner sep=0pt}},
column 1/.style={nodes={rectangle,draw=none}},
column 5/.style={nodes={rectangle,draw=none}}] (m) {
\text{Markov process}: &
 z_1 & z_2 & z_3 & \cdots & z_{n-1} & z_{n}\\
\text{TD iterates}: & 
 V_1 & V_2 & V_3 & \cdots & V_{n-1} & V_{n}\\
};
\foreach \X in {2,3,4,5,6, 7}
{\ifnum\X<7
\draw[-latex] (m-1-\X) -- (m-1-\the\numexpr\X+1) node[midway,above]{};
\draw[-latex] (m-2-\X) -- (m-2-\the\numexpr\X+1) node[midway,above]{};
\fi
\ifnum\X=5
\else
\draw[-latex] (m-1-\X) -- (m-2-\X) node[pos=0.6,left]{};
\fi}
\end{tikzpicture}
\end{center}

Using the mixing assumption, we can control the deviation between the expectations of a bounded function of two iterates separated by $\tau$ steps, in the coupled \textit{v.s.}~the decoupled case. In other words, if~$\tau$ is large, we can almost consider the iterates are independent. This is achieved using Lemma~\ref{lemma_cpl}.

\textbf{Bounding the bias.} Our goal here is to find an upper-bound of  $\Ee [\zeta(V_{n-1}, z_n)]$. Let $\tau \in \Nn$, $\tau > 1$. This can be done in two steps: \begin{enumerate}[label=(\arabic*)]
    \item Relate $\Ee [\zeta(V_{n-1}, z_n)] $ to $\Ee [\zeta(V_{n-1-\tau}, z_n)] $, because ~$\zeta$ is Lipschitz in the first variable, as a quadratic function over a bounded domain. This is true almost surely, hence in  expectation.
    \item Relate $\Ee [\zeta(V_{n-1-\tau}, z_n)]$ to $\Ee [\zeta(V_{n-1-\tau}', z_n')] = 0$, where $V_{n-1-\tau}'$ and $z'_n$ are independent copies of $V_{n-1-\tau}$ and $z_n$ that are decoupled.
\end{enumerate}

(1) First we prove that $\zeta$ is $L$-Lipschitz in the first variable on the $\cH$ ball of radius~$B$: for fixed $V, V' \in \cH$ with norm bounded by $B$, and $z_n$:
\begin{align*}
    | \zeta(V, z_n) - \zeta(V', z_n) | &= \Big| \langle (A_n - A) V + b_n - b , V - V^*_\lambda \rangle_\cH \\
    &\qquad\qquad\qquad-  \langle  (A_n - A) V' + b_n - b , V' - V^*_\lambda \rangle_\cH \Big|\\
    &= \Big| \langle (A_n - A) V + b_n - b , V - V' \rangle_\cH \\
     &\qquad\qquad\qquad+ \langle (A_n - A) (V - V') , V' - V^*_\lambda \rangle_\cH \Big|,
     \end{align*}
where we have used the equality:
$$ \langle a, b \rangle - \langle c, d \rangle = \langle a, b-d \rangle + \langle a - c, d \rangle.$$
    \begin{align*} 
    | \zeta(V, z_n) - \zeta(V', z_n) | & \leq  \|(A_n - A) V + b_n - b\|_\cH \cdot \| V - V'\|_\cH \\
    &\qquad\qquad\qquad+ \|(A_n - A)(V - V')\|_\cH \cdot \|V' - V^*_\lambda\|_\cH \\
    & \leq (4M_\cH B + 2 \sqrt{M_\cH} R) \|V- V'\|_\cH + 8  M_\cH B \|V-V'\|_\cH \\
    &= L \|V-V'\|_\cH, 
\end{align*}
for $L:= 4M_\cH B + 2 \sqrt{M_\cH} R+ 8  M_\cH B$.

Then almost surely, since all the $V_k$ are such that $\|V_k \|_\cH \leq B$:
\begin{align*}
    \zeta(V_{n-1}, z_n) &\leq  \zeta(V_{n-1-\tau}, z_n) + |\zeta(V_{n-1}, z_n) - \zeta(V_{n-1-\tau}, z_n) | \\
    &\leq \zeta(V_{n-1-\tau}, z_n) + L \|V_{n-1} - V_{n-1-\tau} \|_\cH \\
    &\leq \zeta(V_{n-1-\tau}, z_n) + L \sum_{k=n-\tau}^{n-1} \| V_k - V_{k-1} \|_\cH \\
    &= \zeta(V_{n-1-\tau}, z_n) + L \sum_{k=n-\tau}^{n-1} \rho_k \| A_k V_{k-1} - \lambda V_{k-1} + b_k \|_\cH \\
    &\leq \zeta(V_{n-1-\tau}, z_n) + L \sum_{k=n-\tau}^{n-1} \rho_k \underbrace{(2M_\cH B+ \lambda B + \sqrt{M_\cH} R )}_{=:C}.
\end{align*}

Taking the expectation w.r.t. $\mathbb{P}(z_1,...,z_n)$:
$$ \Ee \zeta(V_{n-1}, z_n) \leq \Ee \zeta(V_{n-1-\tau}, z_n)  + LC \sum_{k=n-\tau}^{n-1} \rho_k . $$

(2) Then we use a coupling argument with Lemma~\ref{lemma_cpl}. First, we need to bound $\| \zeta \|_\infty$.

For fixed $V$, $z_n$, with $\|V\|_\cH \leq B$,  almost surely:
\begin{align*}
    | \zeta(V, z_n) | &= | \langle (A_n - A )V + b_n - b, V -V^*_\lambda \rangle_\cH | \\
    & \leq \|V-V^*_\lambda \|_\cH \Big(\|(A_n - A )V\|_\cH + \| b_n - b \|_\cH \Big) \\
    &\leq 2 B (4 M_\cH B + 2 \sqrt{M_\cH} R) =: C'.
\end{align*}

In Lemma~\ref{lemma_cpl}, set $X = (z_1,...,z_{n-1-\tau})$ and  $Y = z_n$. Since:
 $$ X \rightarrow x_{n-\tau} \rightarrow x_n \rightarrow Y $$
 forms a Markov chain, then let $X'$ and $Y'$ denote independent copies drawn from the marginal distributions of $X$ and $Y$, so that $\mathbb{P}(X'=\cdot, Y'=\cdot) = \mathbb{P}(X=\cdot) \otimes \mathbb{P}(Y=\cdot)$.  Then applying Lemma~\ref{lemma_cpl} to the function $h:(X, Y) \rightarrow \zeta(V_{n-1-\tau}, z_n)$ (recalling that  $V_{n-1-\tau}$ is fully determined by the values of $X$):
 $$| \Ee [h(X, Y)] - \Ee[h(X', Y')]| \leq 2 \|h\|_\infty m \mu^\tau. $$
 In other words:
  $$ | \Ee \zeta(V_{n-1-\tau}, z_n) - \Ee \zeta(V'_{n-1-\tau}, z_n')| \leq 2C' m \mu^\tau. $$
  By definition of the random variables $X', Y'$:
  $$\Ee \zeta(V'_{n-1-\tau}, z_n') = \Ee [ \Ee [ \zeta(V'_{n-1-\tau}, z_n') | V'_{n-1-\tau}]] = 0.$$
  
  Putting everything together, we get:
\begin{align*}
    \Ee \zeta(V_{n-1}, z_n) &\leq \Ee \zeta(V_{n-1-\tau}, z_n)  + LC \sum_{k=n-\tau}^{n-1} \rho_k  \\
    & \leq 2C' m\mu^\tau + LC \sum_{k=n-\tau}^{n-1} \rho_k.
\end{align*} 

Using this upper-bound is interesting if $m \mu^\tau$ is of the order of $\sum_{k=n-\tau}^{n-1} \rho_k$. Else (for small $n$), one can always choose $\tau = n-1$, so that, 
because $V_0$ is deterministic:
\begin{align*}
    \Ee \zeta(V_{n-1}, z_n) &\leq \underbrace{\Ee \zeta(V_0, z_n)}_{=0}  + LC \sum_{k=1}^{n-1} \rho_k .
\end{align*} 
\end{proof}

\begin{proof}[Proof of Theorem~\ref{thm2}]
We use a constant step size~$\rho$. From Lemma~\ref{lemma_proj}:
\begin{align*}
    \Ee W_n^0 & \leq (1-2\rho \lambda) \Ee W_{n-1}^0 - 2\rho (1-\gamma) \Ee W_{n-1}^{-1}  + 2\rho \left( 2 C' m \mu^\tau + LC \tau \rho \right) + 4 G^2 \rho^2.
\end{align*}

In particular, we choose~$\tau$ such that $ \mu^\tau = \rho$, that is $\tau = \frac{\log \rho}{\log \mu}= \frac{\log (1/\rho)}{\log (1/\mu)}$. Then:
\begin{align*}
    \Ee W_n^0 & \leq (1-2\rho \lambda)\Ee W_{n-1}^0 - 2\rho (1-\gamma)\Ee W_{n-1}^{-1}  + 2\rho \left( 2 C' m \rho + LC  \rho \frac{\log(1/\rho)}{\log(1/\mu)} \right) + 4 G^2 \rho^2 \\
    & \leq (1-2\rho \lambda) \Ee W_{n-1}^0 - 2\rho (1-\gamma)\Ee W_{n-1}^{-1}  + \rho^2 \left( \underbrace{4 C' m  + 2 LC   \frac{\log(1/\rho)}{\log(1/\mu)} + 4 G^2 }_{=:4 \tilde \sigma_{\lambda, \rho}^2} \right).
\end{align*}

This expression is similar to~(\ref{eqn:rec}). Adapting the proof of Thm.~\ref{thm1} (b), we obtain:
\begin{align*}
    \Ee \|V^{\textit{(e)}}_n - V^*_\lambda \|_{L^2(p)}^2 &\leq \frac{ (1 - 2\rho \lambda)^n}{ 1 - (1 - 2\rho \lambda)^n} \frac{M_\cH \|r\|_{L^2(p)}^2}{\lambda(1-\gamma)} + \frac{2  \rho \sigma^2_{\lambda, \rho}}{1-\gamma},
\end{align*}
with $V^{\textit{(e)}}_n = \frac{\sum_{k=1}^n  (1-2\rho \lambda)^{n-k}  V_{k-1}}{\sum_{k=1}^n (1-2\rho \lambda)^{n-k}}$ the exponentially weighted average iterate.

Finally:
\begin{align*}
    \Ee \|V^{\textit{(e)}}_n - V^* \|_{L^2(p)}^2 &\leq \frac{ 2(1 - 2\rho \lambda)^n}{ 1 - (1 - 2\rho \lambda)^n} \frac{M_\cH \|r\|_{L^2(p)}^2}{\lambda(1-\gamma)} + \frac{4  \rho \sigma^2_{\lambda, \rho}}{1-\gamma} + \frac{2  \| \Sigma^{-\theta/2} V^*\|_\cH^2}{(1-\gamma)^2} \lambda^{1+\theta}.
\end{align*}

Note that $\sigma^2_{\lambda, \rho}$ depends on $\lambda$, $\rho$, and $B$. We look at two cases: \begin{enumerate}[label=(\roman*)]
    \item we are given an oracle on $B$ that does not depend on~$\lambda$.
    \item we use the bound of order $O(1/\lambda)$ given by Prop.~\ref{prop3}:
    $$B = \frac{\sqrt{M_\cH} \|r\|_{L^2(p)}}{\lambda }. $$
\end{enumerate}

\textbf{Case \textit{(i)}: with oracle.} For a fixed $\lambda$ (later chosen to be the optimal one), assume we know a bound~$B$ on $\|V^*_\lambda\|_\cH$. Then $B = O(1)$,  and assuming $\lambda = O(1)$, we only keep track of the dependence in~$\mu$ and put all the other constants in $O(1)$:
$$\sigma^2_{\lambda, \rho} = O\left( \frac{\log(1/\rho)}{\log(1/\mu)} \right) + O(1).  $$
Let us look for $\lambda$ of the form $\lambda = n^{-\alpha}$ with $\alpha \in (0, 1)$:
\begin{align*}
    \Ee \| V^{\textit{(e)}}_n - V^* \|_{L^2(p)}^2 &\leq O\left( \frac{ (1 - 2\rho \lambda)^n}{ 1 - (1 - 2\rho \lambda)^n} \frac{1}{\lambda} \right) + O\left( \rho \frac{\log(1/\rho)}{\log (1/\mu)}  \right) + O(\rho)+ O\left( \lambda^{1+\theta}\right).
\end{align*}
Let us now set $\rho = \frac{\log n}{2\lambda n}$:
\begin{align*}
    \Ee \| V^{\textit{(e)}}_n - V^* \|_{L^2(p)}^2 &\leq O\left( \frac{1}{n\lambda} \right)+ O\left( \frac{\log n}{\lambda n} \frac{\log(1/\rho)}{\log (1/\mu)}  \right) + O(\rho) +O\left( \lambda^{1+\theta}\right).
\end{align*}
Expressing everything with $n$ only:
\begin{align*}
    \Ee \| V^{\textit{(e)}}_n - V^* \|_{L^2(p)}^2 &\leq O\left(  n^{\alpha-1}\right)+ O\left( \frac{(\log n)^2 n^{\alpha-1} }{\log (1/\mu)}  \right) +O\left((\log n)  n^{\alpha-1}\right) + O\left( n^{-\alpha(1+\theta)}\right).
\end{align*}
The first and third terms are smaller than the second one. We can choose $\alpha$ such that: $\alpha - 1 = -\alpha(1+\theta) \iff \alpha = \frac{1}{2 + \theta}$, hence we get the convergence rate:
$$\Ee \left[ \| V^{\textit{(e)}}_n - V^* \|_{L^2(p)}^2 \right] \leq O \left( \frac{(\log n)^2 n^{-\frac{1+\theta}{2+\theta}} }{\log(1/\mu)} \right).$$

\textbf{Case \textit{(ii)}: without oracle.}

Now $B=O(1/\lambda)$. Let us unroll all the constants to see the full dependencies:
\begin{align*}
    \sigma^2_{\lambda,\rho} &= C'm + \frac{1}{2} LC \frac{\log(1/\rho)}{\log (1/\mu)} + G^2 \\
    &= 8 m M_\cH B^2 + 4m \sqrt{M_\cH}R B  \\
    &\qquad\qquad\qquad\qquad\qquad+ \left(12 M_\cH B + 2 \sqrt{M_\cH} R \right) \left(2 M_\cH B + \lambda B+ \sqrt{M_\cH} R \right)\frac{\log(1/\rho)}{2\log (1/\mu)} \\&\qquad\qquad\qquad\qquad\qquad + 4 M_\cH^2 B^2 + \lambda^2 B^2 + M_\cH R^2/2 \\
    &= B^2 \left(8 m M_\cH + 4 M_\cH^2 +  \lambda^2 + 12 M_\cH^2 \frac{\log(1/\rho)}{\log (1/\mu)} + 6 \lambda M_\cH \frac{\log(1/\rho)}{\log (1/\mu)} \right) \\
    &\qquad\qquad + B \left( 4 m \sqrt{M_\cH} R + 8 M_\cH^{3/2} R \frac{\log(1/\rho)}{\log (1/\mu)} + \lambda \sqrt{M_\cH} R \frac{\log(1/\rho)}{\log (1/\mu)} \right) \\
    &\qquad\qquad\qquad\qquad + \left( M_\cH R^2/2 + M_\cH R^2 \frac{\log(1/\rho)}{\log (1/\mu)} \right).
\end{align*}

We focus on the case $\lambda = O(1)$, so this simplifies a bit to:
\begin{align*}
    \sigma^2_{\lambda, \rho} = O(B^2) + O\left( \frac{\log(1/\rho)}{\log (1/\mu)} B^2 \right) + O(B) + O\left( \frac{\log(1/\rho)}{\log (1/\mu)} B \right) + O(1) + O\left(\frac{\log(1/\rho)}{\log (1/\mu)}\right).
\end{align*}
On the other hand, $B = O(1/\lambda)$, hence:
\begin{align*}
    \sigma^2_{\lambda, \rho} = O(1/\lambda^2) + O\left( \frac{\log(1/\rho)}{\lambda^2\log (1/\mu)} \right) + O\left(\frac{1}{\lambda}\right) + O\left( \frac{\log(1/\rho)}{\lambda \log (1/\mu)} \right) + O(1) + O\left(\frac{\log(1/\rho)}{\log (1/\mu)}\right).
\end{align*}

Let us look for $\lambda$ of the form $\lambda = n^{-\alpha}$ with $\alpha \in (0, 1)$. 

In this case $\sigma^2_{\lambda, \rho} = O(1/\lambda^2) + O\left( \frac{\log(1/\rho)}{\log (1/\mu)} 1/\lambda^2 \right) $ and:
\begin{align*}
    \Ee \|  V^{\textit{(e)}}_n - V^* \|_{L^2(p)}^2 &\leq O\left( \frac{ (1 - 2\rho \lambda)^n}{ 1 - (1 - 2\rho \lambda)^n} \frac{1}{\lambda} \right) + O\left(\frac{\rho}{\lambda^2}\right) + O\left( \frac{\rho}{\lambda^2} \frac{\log(1/\rho)}{\log (1/\mu)}  \right) + O\left( \lambda^{1+\theta}\right).
\end{align*}
Let us now set $\rho = \frac{\log n}{2\lambda n}$:
\begin{align*}
    \Ee \|  V^{\textit{(e)}}_n - V^* \|_{L^2(p)}^2 &\leq O\left( \frac{1}{n\lambda} \right)+O\left( \frac{\log n}{\lambda^3 n}  \right)  + O\left( \frac{\log n}{\lambda^3 n} \frac{\log(1/\rho)}{\log (1/\mu)}  \right) + O\left( \lambda^{1+\theta}\right).
\end{align*}
Expressing everything with $n$ only:
\begin{align*}
    \Ee \|  V^{\textit{(e)}}_n - V^* \|_{L^2(p)}^2 &\leq O\left( n^{\alpha-1}\right)+O\left((\log n) n^{3\alpha-1} \right)  +O\left( \frac{(\log n)^2 n^{3\alpha-1} }{\log (1/\mu)}  \right) + O\left( n^{-\alpha(1+\theta)}\right).
\end{align*}
The first and second term are smaller than the third one. We can choose $\alpha$ such that: $3\alpha - 1 = -\alpha(1+\theta) \iff \alpha = \frac{1}{4 + \theta}$, hence we get the convergence rate:
$$\Ee \left[ \|  V^{\textit{(e)}}_n - V^* \|_{L^2(p)}^2 \right] \leq O \left( \frac{(\log n)^2 n^{-\frac{1+\theta}{4+\theta}}}{\log(1/\mu)} \right).$$
\end{proof}

\begin{corollary} \label{cor1}
Assuming~\textup{\ref{hypo:??}} and that the samples are produced by a Markov chain with uniform geometric mixing~\hyperref[hypo:???]{\textbf{\textup{(A3)}}}, the projected $\tau$-Skip-TD iterations~(\ref{eqn:skip_TD}) are such that:

(i) Using $\lambda = n^{-\frac{1}{2+\theta}}$, a constant step size $\rho = \frac{\log n}{2\lambda n}$, $\tau = \lceil \frac{\log(1/\rho)}{\log(1/\mu)}+1 \rceil$, and a projection radius~$B$ which is provided by an oracle and such that $\|V^*_\lambda\|_\cH \leq B$, then:
\begin{align}
    \Ee \left[ \| V^{\textit{(e)}}_{n/\tau} - V^* \|_{L^2(p)}^2 \right] \leq O \left( \frac{(\log n) n^{-\frac{1+\theta}{2+\theta}} }{\log(1/\mu)} \right).
\end{align}
(ii) Using $\lambda = n^{-\frac{1}{4+\theta}}$, $\rho = \frac{\log n}{2\lambda n}$, $\tau = \lceil \frac{\log(1/\rho)}{\log(1/\mu)}+1 \rceil$, and the projection radius~$B$ of Prop.~\ref{prop3}, then:
\begin{align}
    \Ee \left[ \| V^{\textit{(e)}}_{n/\tau} - V^* \|_{L^2(p)}^2 \right] \leq O \left( (\log n) n^{-\frac{1+\theta}{4+\theta}} \right),
\end{align}
assuming that $n$ is a multiple of~$\tau$, with $V^{\textit{(e)}}_n = \sum_{k=1}^n  (1-2\rho \lambda)^{n-k}  V_{k-1} /\sum_{j=1}^n (1-2\rho \lambda)^{n-j}$.
\end{corollary}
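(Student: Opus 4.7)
The plan is to exploit the fact that $\tau$-Skip-TD uses samples $(x_{n\tau}, x'_{n\tau})$ separated by $\tau$ steps of the underlying Markov chain, so that for large enough $\tau$ these samples are almost independent of the past iterate $V_{n-1}$ (which depends only on the earlier spaced samples $(x_{k\tau}, x'_{k\tau})_{k<n}$). This should essentially reduce the analysis to the i.i.d.~case of Theorem~\ref{thm1}, paying only a residual bias cost of order $m\mu^\tau$ rather than the $LC\tau\rho$ term present in Lemma~\ref{lemma_proj}.

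First, I would establish a descent lemma for the Skip-TD iterates analogous to Lemma~\ref{lemma3}. The key step is to bound the correlation term
\begin{align*}
\Ee \bigl[\langle V_{n-1}-V^*_\lambda,\,(A_{n\tau}-A)V_{n-1}+(b_{n\tau}-b)\rangle_\cH\bigr]
\end{align*}
via a coupling argument. Writing $z_k := (x_k, x'_k)$, applying Lemma~\ref{lemma_cpl} with $X := (z_\tau, z_{2\tau}, \ldots, z_{(n-1)\tau})$ and $Y := z_{n\tau}$ (the chain $X \to x_{(n-1)\tau} \to x_{n\tau} \to Y$ being Markov), together with the projection bound $\|V_n\|_\cH \leq B$, shows that this term is at most $2C' m \mu^\tau$ in absolute value, with $C'$ as in Lemma~\ref{lemma_proj}. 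Unlike in the proof of Lemma~\ref{lemma_proj}, no Lipschitz correction $LC\sum\rho_k$ appears, since $V_{n-1}$ itself does not depend on the Markov-chain steps strictly between indices $(n-1)\tau$ and $n\tau$. This yields a recursion
\begin{align*}
\Ee W_n^0 \leq (1-2\rho\lambda)\,\Ee W_{n-1}^0 - 2\rho(1-\gamma)\,\Ee W_{n-1}^{-1} + 4\rho C' m\mu^\tau + 4G^2\rho^2.
\end{align*}

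Second, I would choose $\tau = \lceil \log(1/\rho)/\log(1/\mu)+1\rceil$ so that $m\mu^\tau \leq m\mu\rho = O(\rho)$. The bias contribution $4\rho C' m \mu^\tau$ is then absorbed into the variance term, producing a recursion of precisely the form treated in the i.i.d.~analysis of Theorem~\ref{thm1}~(b). Consequently the whole machinery of that theorem applies, giving after $N := n/\tau$ iterations (with the Skip-TD parameters $\lambda$ and $\rho$ calibrated from $N$) the bound
\begin{align*}
\Ee \|V^{\textit{(e)}}_N - V^*\|_{L^2(p)}^2 \leq O\bigl((\log N)\, N^{-\frac{1+\theta}{2+\theta}}\bigr)
\end{align*}
in case (i) where $B$ is oracle-provided and thus $G^2 = O(1)$, and analogously $O((\log N)\, N^{-\frac{1+\theta}{4+\theta}})$ in case (ii) where $B = O(1/\lambda)$ forces the variance inflation $G^2 = O(1/\lambda^2)$, mirroring the parameter choice of Theorem~\ref{thm2}~(ii).

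Finally, I would translate back to the total sample count $n$ via $N = n/\tau$ and $\tau = O(\log n / \log(1/\mu))$. The $1/\log(1/\mu)$ factor in the stated rates emerges because $\tau$ enters the effective iteration count $N$ in the denominator, so that $N^{-\alpha} = n^{-\alpha}\tau^{\alpha}$ picks up a $\log n/\log(1/\mu)$ factor whose numerator merges with the existing $\log n$ of the i.i.d.~rate. The main technical hurdle is the coupling step: one must verify uniform boundedness of $\zeta(V,z) := \langle V-V^*_\lambda, (A(z)-A)V + (b(z)-b)\rangle_\cH$ on the projection ball (which is immediate from the bounds used in Lemma~\ref{lemma_proj}), and confirm that the Markov structure $X \to x_{(n-1)\tau} \to x_{n\tau} \to Y$ justifies applying Lemma~\ref{lemma_cpl} directly. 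Once these are in place, the rest is bookkeeping parallel to the i.i.d.~proof, with the telescoping/exponential-averaging argument carried out verbatim.
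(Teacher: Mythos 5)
Your first two steps coincide with the paper's argument: you reproduce the descent inequality of Lemma~\ref{lemma_proj} for the skipped samples, observe that the Lipschitz drift term $LC\sum\rho_k$ disappears because $V_{n-1}$ does not depend on the chain between the spaced samples, and control the bias $\Ee\,\zeta(V_{n-1},z_{n\tau})$ by applying Lemma~\ref{lemma_cpl} directly, then pick $\tau$ so that the coupling error is $O(\rho)$ and can be folded into the variance, reducing everything to the i.i.d.-type recursion. (One small imprecision: since $z_{(n-1)\tau}=(x_{(n-1)\tau},x_{(n-1)\tau+1})$, the relevant Markov chain is $X\to x_{(n-1)\tau+1}\to x_{n\tau}\to Y$, so the coupling bound is $2C'm\mu^{\tau-1}$ rather than $2C'm\mu^{\tau}$; with your choice of $\tau$ one still has $\mu^{\tau-1}\le\rho$, so this is harmless.)

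The genuine gap is in the final accounting, which is where your route departs from the paper and fails to give the stated result. The corollary prescribes $\lambda=n^{-\frac{1}{2+\theta}}$ and $\rho=\frac{\log n}{2\lambda n}$ as functions of the \emph{total sample count} $n$, and claims a rate with the factor $1/\log(1/\mu)$ in case (i) and a leading term free of $\mu$ in case (ii). You instead recalibrate $(\lambda,\rho)$ from the iteration count $N=n/\tau$ and then substitute $N=n/\tau$ into the i.i.d.\ rate; but
\begin{align*}
(\log N)\,N^{-\frac{1+\theta}{2+\theta}} \;\asymp\; (\log n)\Big(\tfrac{\log n}{\log(1/\mu)}\Big)^{\frac{1+\theta}{2+\theta}} n^{-\frac{1+\theta}{2+\theta}},
\end{align*}
so the factor $\tau^{\frac{1+\theta}{2+\theta}}$ multiplies the bound rather than ``merging'' with the existing $\log n$: you pick up an extra $(\log n)^{\frac{1+\theta}{2+\theta}}$ and only a fractional power of $1/\log(1/\mu)$, and in case (ii) you introduce a $(\log(1/\mu))^{-\frac{1+\theta}{4+\theta}}$ dependence that the stated leading term does not have. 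Thus the proposal proves a strictly weaker bound, and under parameter choices different from those in the statement. The paper avoids this by keeping $\lambda$ and $\rho$ tied to $n$ exactly as prescribed, running the recursion for the $n/\tau$ Skip-TD iterations, and noting that shrinking the iteration count by the factor $\tau$ only affects the exponential-forgetting (initial-condition) term, which alone picks up the factor $\tau=O(\log n/\log(1/\mu))$, while the variance term $O(\rho)$ (resp.\ $O(\rho/\lambda^2)$ when $B=O(1/\lambda)$) and the regularization error $O(\lambda^{1+\theta})$ are untouched; balancing the exponents as in Theorem~\ref{thm2} then produces exactly the stated $\log n$ and $\log(1/\mu)$ dependence. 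To repair your write-up you should drop the recalibration from $N$ and redo the last step along these lines.
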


\begin{proof}[Proof of Corollary~\ref{cor1}]
We consider the iterates~(\ref{eqn:skip_TD}), for some positive integer $\tau$ to be chosen later. The beginning of the proof of Lemma~\ref{lemma_proj} can be reproduced:
\begin{align*}
    \Ee W_n^0 & \leq \Ee W_{n-1}^0 + 2 \rho_n \Ee \left[ \langle V_{n-1} - V^*_\lambda, (A_{n\tau} - \lambda I)V_{n-1} + b_{n\tau} \rangle_\cH \right] + 4 \rho_n^2 G^2 \\
    & \leq (1-2\rho_n \lambda) \Ee W_{n-1}^0 -2 \rho_n (1-\gamma) \Ee W_{n-1}^{-1}   + 4 G^2 \rho_n^2+ 2\rho_n \Ee \zeta(V_{n-1}, z_{n\tau}).
\end{align*}
The only difference is that we now consider
$ \Ee \zeta(V_{n-1}, z_{n \tau}) $ instead of $\Ee \zeta(V_{n-1}, z_n)$. To bound it, we do not need the step (1) (which exploits the fact that~$\zeta$ is Lipschitz), and directly go to step (2). The dependencies between the random variables are now:
\begin{center}
\begin{tikzpicture}
\matrix[matrix of math nodes,column sep=1.8em,row
sep=3em,cells={nodes={circle,draw,minimum width=4.4em,inner sep=0pt}},
column 1/.style={nodes={rectangle,draw=none}},
column 4/.style={nodes={rectangle,draw=none}}] (m) {
\text{Markov process}: &
 z_{\tau(n-1)} & z_{\tau(n-1)+1}  & \cdots & z_{\tau n -1} & z_{\tau n}\\
\text{Skip-TD iterates}: & 
 V_{n-1} & & & & V_{n}\\
};
\foreach \X in {2,3,4,5,6}
{\ifnum\X<6
\draw[-latex] (m-1-\X) -- (m-1-\the\numexpr\X+1) node[midway,above]{};
\fi
\ifnum\X=5
\else
\fi}

\draw[-latex] (m-2-2) -- (m-2-6) node[midway,above]{};
\draw[-latex] (m-1-2) -- (m-2-2) node[pos=0.6,left]{};
\draw[-latex] (m-1-6) -- (m-2-6) node[pos=0.6,left]{};
\end{tikzpicture}
\end{center}

Applying again Lemma~\ref{lemma_cpl}, we get the upper-bound:
 $$ | \Ee \zeta(V_{n-1}, z_{n\tau}) - \Ee \zeta(V'_{n-1}, z_{n\tau}')| \leq 2C' m \mu^{\tau-1},$$
 where $V'_{n-1}$, and $z_{n\tau}'$ are independent copies such that $\Ee \zeta(V'_{n-1}, z_{n\tau}')=0$.
 
 Now, using a constant step size~$\rho$, we set $\tau := \lceil \frac{\log(1/\rho)}{\log(1/\mu)}+1 \rceil$, such that $ \mu^{\tau - 1} \leq \rho$. Then:
 \begin{align*}
    \Ee W_n^0 &\leq (1-2\rho \lambda) \Ee W_{n-1}^0 -2 \rho (1-\gamma) \Ee W_{n-1}^{-1}   + 4 G^2 \rho^2+ 4\rho^2 C'm.
\end{align*}
Now we can do the same proof as for Theorem~\ref{thm2} with $\sigma^2_{\lambda, \rho} = C'm + G^2$,  now independent of~$\rho$:
\begin{align*}
    \Ee \|V^{\textit{(e)}}_n - V^* \|_{L^2(p)}^2 &\leq \frac{ 2(1 - 2\rho \lambda)^n}{ 1 - (1 - 2\rho \lambda)^n} \frac{M_\cH \|r\|_{L^2(p)}^2}{\lambda(1-\gamma)} + \frac{4  \rho \sigma^2_{\lambda, \rho}}{1-\gamma} + \frac{2  \| \Sigma^{-\theta/2} V^*\|_\cH^2}{(1-\gamma)^2} \lambda^{1+\theta}.
\end{align*}

\textbf{Case \textit{(i)}: with oracle.} Now $\sigma^2_{\lambda, \rho} =  O(1) $.
We look for $\lambda$ of the form $\lambda = n^{-\alpha}$, $\alpha \in (0, 1)$:
\begin{align*}
    \Ee \| V^{\textit{(e)}}_n - V^* \|_{L^2(p)}^2 &\leq O\left( \frac{ (1 - 2\rho \lambda)^n}{ 1 - (1 - 2\rho \lambda)^n} \frac{1}{\lambda} \right) +  O(\rho)+ O\left( \lambda^{1+\theta}\right).
\end{align*}
Let us now set $\rho = \frac{\log n}{2\lambda n}$:
\begin{align*}
    \Ee \| V^{\textit{(e)}}_n - V^* \|_{L^2(p)}^2 &\leq O\left( \frac{1}{n\lambda} \right) + O(\rho) +O\left( \lambda^{1+\theta}\right).
\end{align*}
Of course, to compute the $n$-th iteration, one needs to generate $\tau n$ samples from the Markov chain. So for a fair comparison, we must look at the convergence of~$V_{n/\tau}$ (assuming $n$ is a multiple of~$\tau$ for simplicity):
\begin{align*}
    \Ee \| V^{\textit{(e)}}_{n/\tau} - V^* \|_{L^2(p)}^2 &\leq O\left( \frac{\tau}{n\lambda} \right) + O(\rho) +O\left( \lambda^{1+\theta}\right).
\end{align*}
$\tau$ is such that: 
\begin{align*}
    \tau &= O\left(\frac{\log(1/\rho)}{\log(1/\mu)} \right) = O\left(\frac{\log n}{\log(1/\mu)} \right).
\end{align*}
Expressing everything with $n$ only:
\begin{align*}
    \Ee \| V^{\textit{(e)}}_{n/\tau} - V^* \|_{L^2(p)}^2 &\leq O\left( \frac{\log n}{\log(1/\mu)} n^{\alpha-1}\right) +O\left((\log n)  n^{\alpha-1}\right) + O\left( n^{-\alpha(1+\theta)}\right).
\end{align*}
Choosing $\alpha$ such that: $\alpha - 1 = -\alpha(1+\theta) \iff \alpha = \frac{1}{2 + \theta}$, we get the convergence rate:
$$\Ee \left[ \| V^{\textit{(e)}}_{n/\tau} - V^* \|_{L^2(p)}^2 \right] \leq O \left( \frac{(\log n) n^{-\frac{1+\theta}{2+\theta}}}{\log(1/\mu)} \right).$$

\textbf{Case \textit{(ii)}: without oracle.} Using $B = O(1/\lambda)$, now:
\begin{align*}
    \sigma^2_{\lambda, \rho} = O(1/\lambda^2)  + O\left(\frac{1}{\lambda}\right)  + O(1) .
\end{align*}
Let us look for $\lambda$ of the form $\lambda = n^{-\alpha}$ with $\alpha \in (0, 1)$. We also set  $\rho = \frac{\log n}{2\lambda n}$.  In this case $\sigma^2_{\lambda, \rho} = O(1/\lambda^2)  $ and:
\begin{align*}
    \Ee \|  V^{\textit{(e)}}_n - V^* \|_{L^2(p)}^2 &\leq O\left( \frac{ (1 - 2\rho \lambda)^n}{ 1 - (1 - 2\rho \lambda)^n} \frac{1}{\lambda} \right) + O\left(\frac{\rho}{\lambda^2}\right) + O\left( \lambda^{1+\theta}\right) \\
    & \leq O\left(  \frac{1}{n\lambda} \right) + O\left(\frac{\rho}{\lambda^2}\right) + O\left( \lambda^{1+\theta}\right).
\end{align*}
If~$n$ is a multiple of~$\tau$:
\begin{align*}
    \Ee \| V^{\textit{(e)}}_{n/\tau} - V^* \|_{L^2(p)}^2 &\leq O\left( \frac{\tau}{n\lambda} \right)  +O\left(\frac{\rho}{\lambda^2}\right) +O\left( \lambda^{1+\theta}\right).
\end{align*}
$\tau$ is such that: 
\begin{align*}
    \tau &= O\left(\frac{\log(1/\rho)}{\log(1/\mu)} \right) = O\left(\frac{\log n}{\log(1/\mu)} \right).
\end{align*}
Expressing everything with $n$ only:
\begin{align*}
    \Ee \| V^{\textit{(e)}}_{n/\tau} - V^* \|_{L^2(p)}^2 &\leq O\left( \frac{\log n}{\log(1/\mu)} n^{\alpha-1}\right) +O\left((\log n)  n^{3\alpha-1}\right) + O\left( n^{-\alpha(1+\theta)}\right).
\end{align*}
Choosing $\alpha$ such that: $3\alpha - 1 = -\alpha(1+\theta) \iff \alpha = \frac{1}{4 + \theta}$, we get the convergence rate:
$$\Ee \left[ \| V^{\textit{(e)}}_{n/\tau} - V^* \|_{L^2(p)}^2 \right] \leq O \left( (\log n) n^{-\frac{1+\theta}{4+\theta}} \right).$$
\end{proof}

\section{Experimental design} \label{sec:implem}

\subsection{Geometric mixing of the Markov chain} \label{subsec:mixing}

\begin{lemma} Consider the Markov chain defined on the torus $[0, 1]$ by:
\begin{itemize}
    \item with probability $\e$, $x_{n+1} \sim \cU([0, 1])$;
    \item with probability $1-\e$, $x_{n+1} = x_n$.
\end{itemize}
This Markov chain mixes to the uniform distribution at uniform geometric rate $(1-\e)$:
\begin{align*}
    \sup_{x \in [0, 1]} d_{TV} \big( \Pp( x_n \in \cdot | x_0=x), \cU([0, 1]) \big) \leq (1-\e)^n.
\end{align*}
\end{lemma}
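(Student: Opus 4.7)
The plan is to obtain an explicit formula for the $n$-step transition kernel and then bound the total variation distance directly. The key observation is that the transition rule is a mixture of two very simple moves: a geometric ``holding'' move and a regeneration step that resets the chain to a sample from $\mathcal{U}([0,1])$ independent of the past. Consequently, the only way $x_n$ can remember anything about $x_0$ is if no regeneration step has occurred in the first $n$ transitions.

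Concretely, I would let $N_n$ denote the (random) number of times in the first $n$ transitions that the ``resample'' branch is taken. Since each step independently resamples with probability $\varepsilon$, $N_n \sim \text{Binomial}(n, \varepsilon)$, so $\Pp(N_n = 0) = (1-\varepsilon)^n$. On the event $\{N_n = 0\}$ we have $x_n = x_0$ deterministically; on the complementary event $\{N_n \geq 1\}$, the value of $x_n$ equals the last resample, which by construction is uniform on $[0,1]$ and independent of~$x_0$. Hence the conditional distribution of $x_n$ given $x_0 = x$ is the mixture
\begin{equation*}
\Pp(x_n \in \cdot \mid x_0 = x) = (1-\varepsilon)^n \, \delta_x(\cdot) + \bigl(1 - (1-\varepsilon)^n\bigr)\, \mathcal{U}([0,1])(\cdot).
\end{equation*}

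From this formula, for any measurable $A \subset [0,1]$,
\begin{equation*}
\bigl| \Pp(x_n \in A \mid x_0 = x) - \mathcal{U}([0,1])(A) \bigr| = (1-\varepsilon)^n \, \bigl| \mathbf{1}_{x \in A} - |A| \bigr| \leq (1-\varepsilon)^n,
\end{equation*}
since $|\mathbf{1}_{x \in A} - |A|| \leq 1$. Taking the supremum over $A \in \mathcal{A}$ and then over $x \in [0,1]$ yields the claim. There is no real obstacle here: the argument is essentially a one-line coupling once the binomial decomposition on $N_n$ is written down. The only mild subtlety is to justify that the ``last resample'' delivers a uniform draw (this follows because the resample values are i.i.d.\ $\mathcal{U}([0,1])$ and independent of the Bernoulli indicators selecting the branch, so conditioning on $\{N_n \geq 1\}$ does not bias the marginal of the last resampled value).
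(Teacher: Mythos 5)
Your proposal is correct and reaches exactly the same key identity as the paper, namely the closed-form mixture $\Pp(x_n \in \cdot \mid x_0 = x) = (1-\e)^n \delta_x + (1-(1-\e)^n)\,\cU([0,1])$, followed by the same one-line total-variation bound. The only difference is cosmetic: you justify the mixture formula by a regeneration argument (binomial count of resamples, last resample uniform and independent of $x_0$), whereas the paper obtains it by an explicit recursion on the one-step kernel; both derivations are sound.
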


\begin{proof}
Let $x \in [0, 1]$, $p = \cU([0, 1])$ the uniform distribution, and $p_n := \Pp( x_n \in \cdot | x_0=x)$.

We will show that:
$$d_{TV} \left( p_n, p \right) \leq (1-\e)^n.$$

For $n=1$, we have:
\begin{align*}
    p_1 = \Pp( x_1 \in \cdot | x_0=x) = \e p + (1 - \e) \delta_{x}.
\end{align*}
Then for $n=2$:
\begin{align*}
    \Pp( x_2 \in \cdot | x_0=x, x_1) = \e p + (1 - \e) \delta_{x_1}.
\end{align*}
Taking the marginal with respect to $x_1 | x_0$:
\begin{align*}
   p_2 &= \Pp( x_2 \in \cdot | x_0=x) \\
   &= \int (\e p + (1 - \e) \delta_{x_1}) dp_1(x_1) \\
   &= \e p + (1 - \e) \int  \delta_{x_1} (\e p(x_1) + (1-\e) \delta_x(x_1)) dx_1 \\
   &= \e p + \e (1-\e) p + (1-\e)^2 \delta_x.
\end{align*}

A simple recursion on $n$ shows that, for $n \geq 1$:
\begin{align*}
    p_n &= (\e + (1-\e) \e + ... + (1-\e)^{n-1} \e) p + (1-\e)^n \delta_x  \\
    & = (1 - (1-\e)^n) p + (1-\e)^n \delta_x.
\end{align*}
Hence:
\begin{align*}
    d_{TV} \left( p_n, p \right) &= \sup_{A \in \cA} \Big| p_n(A) - p(A) \Big| \\
    & = (1-\e)^n \sup_{A \in \cA} \Big| \delta_x(A) - p(A) \Big| \\
    &\leq (1-\e)^n.
\end{align*}
\end{proof}

\subsection{Implementation details} \label{subsec:implem}

The ``kernel trick'' enables an implementation of the non-parametric TD algorithm up to iteration $n$, which only uses the kernel matrix with entries $K_{i,j} := K(x_i, x_j)$, for $1 \leq i, j \leq n+1$.

Each value function $V_k$, for $1 \leq k \leq n$ belongs to the span of the basis of functions $(\Phi(x_j))_{1\leq j\leq k}$:
$$V_k = \sum_{j=1}^k \alpha_{k, j} \Phi(x_j). $$
Hence $V_k$ is represented in memory by the vector $(\alpha_{k, j})_{1 \leq j \leq k}$.

The TD iterations are equivalent to filling the lower-triangular matrix $\alpha$:
\begin{align*}\left\{
    \begin{array}{rrll}
    \alpha_{1, 1} &=& \rho_1 r(x_1) &\\
    \alpha_{k, j} &=& (1 - \rho_k \lambda) \alpha_{k-1, j} & \text{for }  1\leq j < k \leq n\\
        \alpha_{k, k} &=& \rho_k r(x_k) + \rho_k \sum_{j=1}^{k-1} \alpha_{k-1, j} \left( \gamma  K_{j, k+1} - K_{j, k} \right) & \text{for }  1\leq k \leq n.   \end{array}
\right.
\end{align*}
At inference time, for $x \in \cX$, $V_k(x)$ can be computed from $\alpha$ and the vector $(K(x_j, x))_{1\leq j \leq k}$:
$$V_k(x) = \sum_{j=1}^k \alpha_{k, j} K(x_j, x). $$
Finally, averaging can be performed by simple operations on $\alpha$, which correspond to exchanging the indices of a triangular sum. Indeed, if:
$$V_n^{\textit{(e)}} = \sum_{k=1}^n  w_{k, n}  V_{k-1},$$
for instance with $w_{k, n} := (1-\rho \lambda)^{n-k}/ \sum_{k=1}^n (1-\rho \lambda)^{n-k}$, then, using that $V_0=0$:
\begin{align*}
    V_n^{(e)} &=  \sum_{k=2}^n  w_{k, n} \sum_{j=1}^{k-1} \alpha_{k-1, j} \Phi(x_j) \\
    & = \sum_{1 \leq j < k \leq n} w_{k, n} \alpha_{k-1, j} \Phi(x_j) \\
    & = \sum_{j=1}^{n-1} \Phi(x_j) \sum_{k=j+1}^n w_{k, n} \alpha_{k-1, j} \\
    &= \sum_{j=1}^{n-1} \alpha_{n, j}^{(e)} \Phi(x_j),
\end{align*}
with $\alpha_{n, j}^{(e)} :=\sum_{k=j+1}^n w_{k, n} \alpha_{k-1, j}$.

This implementation requires the storage of $O(n^2)$ values and $O(n^2)$ computations  to compute~$V_n$. In our Python implementation, the limiting factor is the computation time of the kernel matrix. When $n \geq 1500$ and $K_2$ is used (empirically, the eigenvalues of the kernel matrix have a fast decrease), we use an incomplete Cholesky decomposition~\cite{bach2002kernel} with maximal rank 100 to approximate the kernel matrix. It is computed online with a fast Cython implementation, and does not require the compute the whole kernel matrix. Overall, the CPU time for computing $V_n$ for $n=2000$ is approximately 20 seconds on a standard laptop. Running all the experiments of this paper took a few hours.

\subsection{Additional experiments} \label{subsec:exps}

We test the robustness of TD to inexact estimations of~$\theta$, hence resulting in too large or too small~$\lambda$. If~$\theta$ is under-estimated, our theorems still guarantee convergence for $\theta > -1$, but not if it is over-estimated. In Fig.~\ref{fig2}, we plot the convergence of the averaged iterates for different values of $\theta$, smaller or larger than the optimal~$\theta=-1/4$ (standard deviations have been removed for readability). Fig.~\ref{fig2} shows that the convergence is quite robust and gives similar results for~$\theta=0$ or $\theta=-1/2$. A strongly overestimated~$\theta=1$ shows a slow convergence (not covered by our theorems). However, as expected, with $\theta=-1$, the algorithm does not converge.
\begin{figure}[ht]
    \centering
    \includegraphics[width=0.47\linewidth,trim={0 0 0.2cm 0},clip]{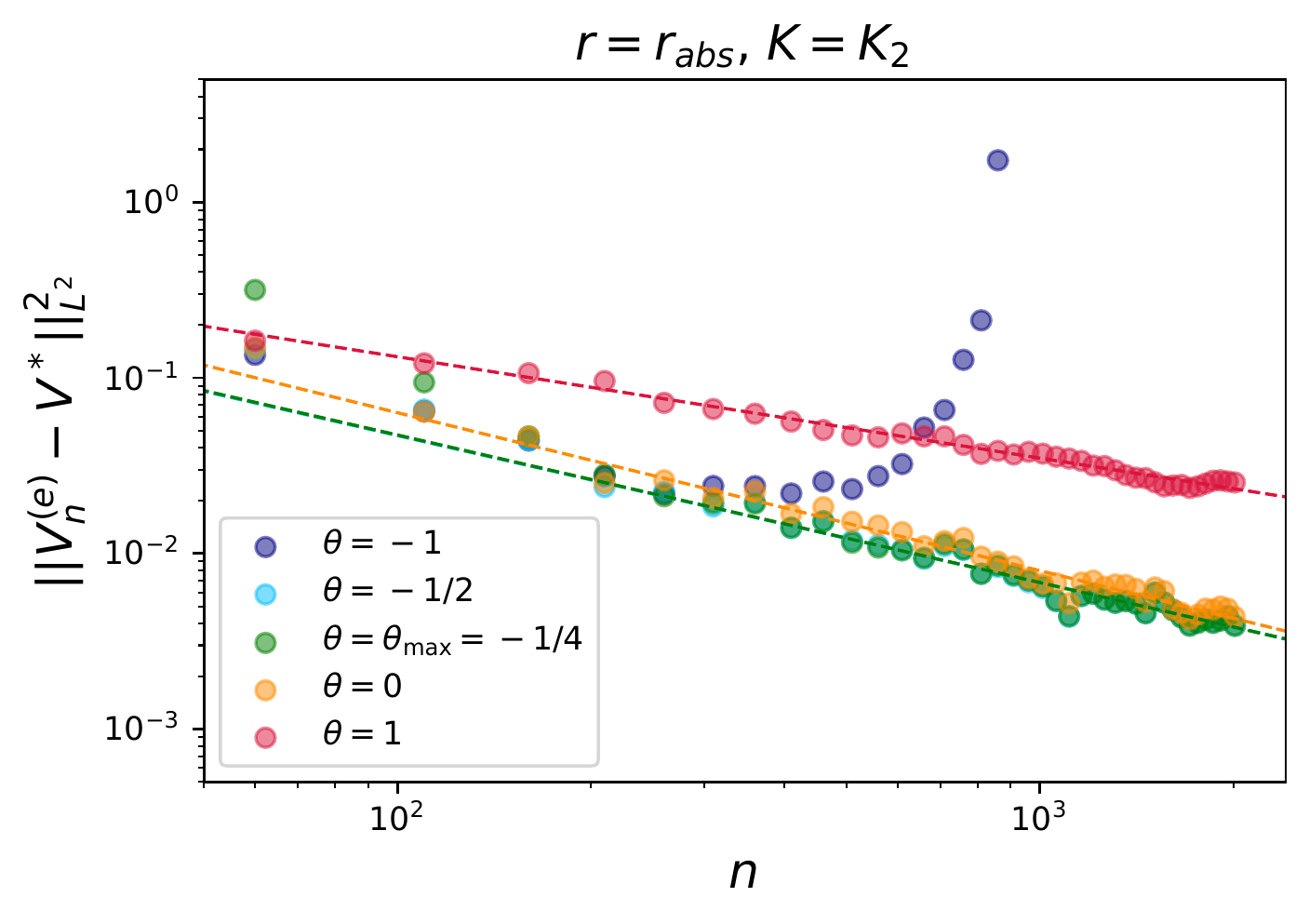}
    \caption{Convergence of the averaged TD iterates as in Thm.~\ref{thm1}\ref{hypo:thm1_b}} with over and underestimated values of~$\theta$.
    \label{fig2}
\end{figure}

Finally, we compare TD and $\tau$-Skip-TD, with $\tau$ prescribed by Cor.~\ref{cor1}. Computing this~$\tau$ requires the access to an oracle on the mixing parameter~$\mu$ ($\mu=1-\e$ in our example). We then use $\tau = \lceil \frac{\log(1/\rho)}{\log(1/\mu)}+1 \rceil$. We compare the results of TD and $\tau$-Skip-TD for two different values of~$\e$. We expect similar convergence rates, but with different constants. The results are plotted in Figure~\ref{fig3}. For the fast mixing chain ($\e=0.8$), we get comparable results. For the slowly mixing chain ($\e=0.2$), plain TD seems faster, although maybe the asymptotic regime has not been reached yet for $n=2000$.

\begin{figure}[ht]
    \centering
    \includegraphics[width=0.47\linewidth,trim={0 0 0.2cm 0},clip]{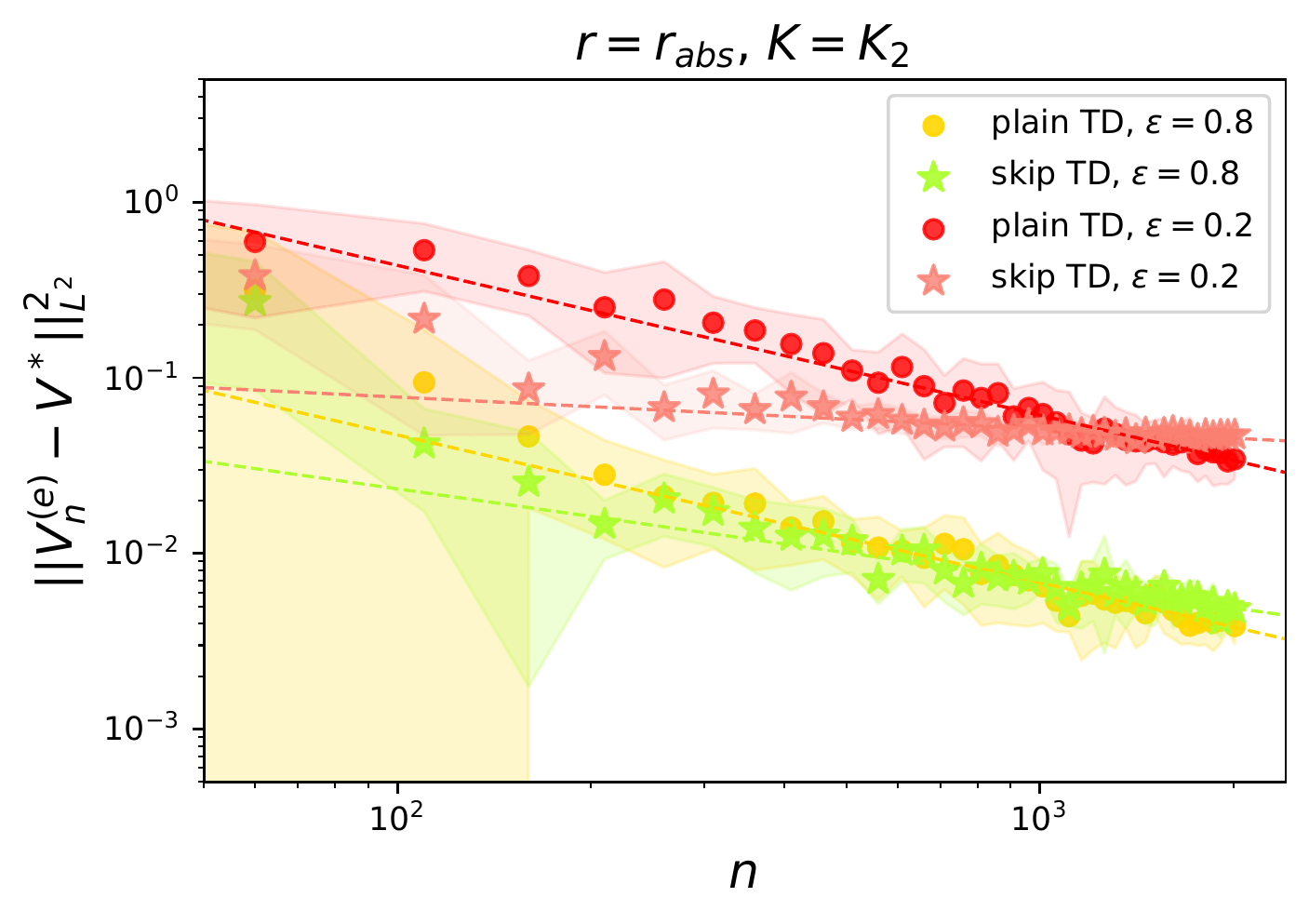}
    \caption{TD vs $\tau$-Skip-TD with fast ($\e=0.8$) and slowly ($\e=0.2$) mixing Markov chains}
    \label{fig3}
\end{figure}

\end{document}